\declaretheoremstyle[
  bodyfont=\normalfont\itshape,
  headformat=\NAME\ \NUMBER\NOTE,
]{myplain}
\declaretheoremstyle[
  headformat=\NAME\ \NUMBER\NOTE,
]{mydefinition}
\newcommand{\envqed}{{\lower-0.3ex\hbox{$\triangleleft$}}}
\declaretheorem[style=myplain,numberwithin=section]{theorem}
\declaretheorem[style=myplain,numberlike=theorem]{lemma}
\declaretheorem[style=mydefinition,numberlike=theorem,qed=\envqed]{remark}
\declaretheorem[style=mydefinition,numberlike=theorem,qed=\envqed]{example}
\newcommand{\R}{\mathbb{R}}
\newcommand{\scp}[2]{\left\langle{#1,\, #2}\right\rangle}
\newcommand{\I}{\operatorname{I}}
\DeclarePairedDelimiterX\newset[1]\lbrace\rbrace{\setaux #1||\endsetaux}
\def\setaux#1|#2|#3\endsetaux{\if\relax\detokenize{#2}\relax #1 \else #1 \;\delimsize\vert\; #2 \fi}
\renewcommand{\set}[1]{\newset*{#1}}
\let\epsilon\varepsilon
\let\phi\varphi
\let\rho\varrho
\newcommand{\arXiv}[2]{#2}
\renewcommand{\O}{\mathcal{O}}
\newcommand{\e}{\mathrm{e}}
\newcommand{\1}{\mathbbm{1}}
\newcommand{\dt}{\Delta t}
\newcommand{\dtE}{\dt_\mathrm{FE}}
\newcommand{\dtau}{\Delta \tau}
\newcommand{\ssp}{{\mathcal C}}
\newcommand{\etaest}{\mathcal{H}}
\newcommand{\phiold}{\phi^{\mathrm{old}}}
\newcommand{\told}{t^{\mathrm{old}}}
\newcommand{\uold}{u^{\mathrm{old}}}
\newcommand{\etaold}{\eta^{\mathrm{old}}}
\newcommand{\tnew}{t^{\mathrm{new}}}
\newcommand{\unew}{u^{\mathrm{new}}}
\newcommand{\etanew}{\eta^{\mathrm{new}}}
\newcommand{\tgamma}{t^{n}_\gamma}
\newcommand{\ugamma}{u^{n}_\gamma}
\newcommand{\etagamma}{\eta(u^{n}_\gamma)}
\newcommand{\fnum}{f^\mathrm{num}}
\newenvironment{keywords}{\par\textbf{Key words.}}{\par}
\newenvironment{AMS}{\par\textbf{AMS subject classification.}}{\par}
\title{General Relaxation Methods\texorpdfstring{\\}{ }for Initial-Value Problems\texorpdfstring{\\}{ }with Application to Multistep Schemes}
\author{Hendrik Ranocha, Lajos Lóczi, David I. Ketcheson}
\date{October 17, 2020}
\begin{document}

\maketitle

\begin{abstract}
  Recently, an approach known as relaxation has been developed for preserving
the correct evolution of a functional in the numerical solution of
initial-value problems, using Runge--Kutta methods.
We generalize this approach to multistep methods, including all
general linear methods of order two or higher, and many other classes of schemes.
We prove the existence of a valid relaxation parameter and high-order
accuracy of the resulting method, in the context of general equations, including
but not limited to conservative or dissipative systems.
The theory is illustrated with several numerical examples.

\end{abstract}

\begin{keywords}
  energy stability,
  entropy stability,
  monotonicity,
  strong stability,
  invariant conservation,
  linear multistep methods,
  time integration schemes
\end{keywords}

\begin{AMS}
  65L06,  
  65L20,  
  65M12,  
  65M70,  
  65P10,  
  37M99   
\end{AMS}

\section{Introduction}
\label{sec:introduction}

Consider an initial-value ordinary differential equation (ODE) in a Banach space:
\begin{equation}
\label{eq:ode}
  u'(t)
  =
  f(u(t))
  \quad
  u(0) = u^{0}.
\end{equation}
Here and in the following, we use upper indices for $t$ and $u$
to denote the index of the corresponding time step.
We say the problem \eqref{eq:ode} is \emph{dissipative} with respect
to a smooth functional $\eta$ if
\begin{subequations}
\label{eq:ode-dissiaptive}
\begin{gather}
  \od{}{t} \eta(u(t)) \leq 0\\
\intertext{for all solutions $u$ of \eqref{eq:ode}, i.e.\ if}
  \forall u\colon \quad \eta'(u) f(u) \leq 0.
\end{gather}
\end{subequations}
In the case of equality in \eqref{eq:ode-dissiaptive},
we say the problem is \emph{conservative}. In the numerical solution of
dissipative or conservative problems, it is desirable to enforce the same
property discretely. For a $k$-step method we thus require
\begin{equation}
\label{eq:discrete-dissipation}
  \eta(u^{n}) \le \max\{ \eta(u^{n-1}), \eta(u^{n-2}), \dots, \eta(u^{n-k}) \}
\end{equation}
for dissipative problems, or
\begin{equation}
\label{eq:discrete-conservation}
  \eta(u^{n}) = \eta(u^{n-1})
\end{equation}
for conservative problems. A numerical method satisfying this requirement is
also said to be dissipative (also known as monotone) or conservative, respectively.

For instance, initial-value problems for hyperbolic or parabolic partial
differential equations (PDEs) usually
have a conserved or dissipated quantity, but in the presence of boundary
and/or source terms this quantity may sometimes increase. In that case,
energy/entropy estimates are still important and the methods developed in this
article are still applicable.

\subsection{Related Work}
Conservative or dissipative ODEs arise in a variety of applications and various
approaches exist for enforcing these properties discretely; for conservative problems
see e.g.\ \cite{hairer2006geometric}, and for dissipative problems see
e.g.\ \cite{dekker1984stability,gottlieb2011strong} and references therein. Besides classical examples
such as Hamiltonian systems, many hyperbolic or hyperbolic-parabolic PDEs
such as the Euler and Navier--Stokes equations
are equipped with an entropy whose evolution in time is important both physically
and for mathematical and numerical stability estimates \cite[Chapter~5]{dafermos2010hyperbolic}.
While there are many semidiscretely entropy-conservative or -dissipative
numerical methods \cite{tadmor2003entropy,lefloch2002fully,fisher2013high,ranocha2018thesis,ranocha2018comparison,sjogreen2018high,friedrich2018entropyHP,chan2018discretely},
transferring such semidiscrete results to fully
discrete schemes is not easy in general. Proofs of monotonicity for
fully discrete schemes have mainly been limited to semidiscretizations
including certain amounts of dissipation \cite{higueras2005monotonicity,zakerzadeh2016high,ranocha2018stability,jungel2017entropy},
linear equations \cite{tadmor2002semidiscrete,ranocha2018L2stability,sun2017stability,sun2019strong},
or fully implicit time integration schemes
\cite{lefloch2002fully,friedrich2019entropy,boom2015high,nordstrom2019energy,ranocha2019some,burrage1979stability,burrage1980nonlinear}.
For explicit methods and general equations, there are negative experimental
and theoretical results concerning energy/entropy stability
\cite{ranocha2020strong,ranocha2020energy,lozano2018entropy,lozano2019entropy}.

To cope with the limitations of time integration schemes, several methods for
enforcing discrete conservation or dissipation have been proposed.
These include orthogonal projections, for one-step methods
\cite{shampine1986conservation,grimm2005geometric}
\cite[Section~IV.4]{hairer2006geometric} and multistep methods
\cite{eich1993convergence,gear1986maintaining,shampine1999conservation},
as well as more problem-dependent techniques for dissipative ODEs such as
artificial dissipation or filtering
\cite{sun2019enforcing,offner2020analysis,offner2018artificial}.
For one-step methods, there are also extensions to projection methods employing
more general search directions via embedded Runge--Kutta (RK) methods
\cite{calvo2006preservation,calvo2010projection,laburta2015numerical}.
Kojima \cite{kojima2016invariants}
reviewed some related methods and proposed another kind of projection scheme
for conservative systems.

The ideas of relaxation methods can be traced back to
\cite{sanz1982explicit,sanz1983method} and \cite[pp. 265--266]{dekker1984stability}.
A relaxation approach was applied in the first two references to the leapfrog
method, and in the third reference to the fourth-order Runge--Kutta method,
in each case to conserve or dissipate an inner-product norm; see also \cite{calvo2006preservation}.
General relaxation Runge--Kutta methods without order reduction have been
proposed and analyzed recently in
\cite{ketcheson2019relaxation,ranocha2020relaxation,ranocha2020relaxationHamiltonian,ranocha2020fully}.
Herein we further generalize the relaxation approach to multistep methods; we focus
on linear multistep methods but the theoretical results apply to virtually any
conceivable method for \eqref{eq:ode}, including for instance all general
linear methods. In the context of partial differential equations, the
relaxation approach is not even limited to a method-of-lines framework.

\subsection{Outline of the Article}

Firstly, we introduce the general relaxation approach for time integration
methods in Section~\ref{sec:general-relaxation}. The proofs of accuracy
and existence of solutions for the relaxation parameter $\gamma$ are
divided into multiple steps and presented in
Sections~\ref{sec:accuracy-u}--\ref{sec:accuracy-gamma-general}, where
the latter section contains the most general result.
Section~\ref{sec:estimaeta-est-1} shows how to compute useful
estimates for the evolution of $\eta(u)$ for non-conservative problems.
In Section~\ref{sec:fixed-fixed-coefficients}, we study the accuracy of multistep
relaxation methods in the case when the method coefficients are not
adapted to account for the variable step size.
Afterwards, we study stability and accuracy properties of relaxation methods in
Section~\ref{sec:stability-properties} and present numerical results
supporting our analysis in Section~\ref{sec:numerical-results}.
Finally, we summarize our findings and present some directions of future
research in Section~\ref{sec:summary}.
\arXiv{%
An additional analysis of superconvergence results for relaxation methods
is contained in the extended version of this article available on arXiv.org
\cite{ranocha2020general}.}{%
An additional analysis of superconvergence results for relaxation methods
is contained in the appendix.}

\section{A General Relaxation Approach}
\label{sec:general-relaxation}

To describe the general relaxation approach, we first write a $k$-step,
order $p$ (with $p \geq 2$) time integration method for the
ODE \eqref{eq:ode} in the form
\begin{align}
\label{eq:normal-step}
  \unew = \psi(f, \dt, u^{n-1}, u^{n-2}, \dots, u^{n-k}).
\end{align}
Here $\unew \approx u(\tnew)$ is the numerical solution that ordinarily
would be used to continue marching in time, and $\tnew = t^{n-1} + \dt$
is the corresponding time of approximation.
But since $\unew$ might violate a desired dissipativity
\eqref{eq:discrete-dissipation} or conservation
\eqref{eq:discrete-conservation} property, we perform a line search
along the (approximate) secant line connecting $\unew$ and a convex
combination $\uold$ of previous solution values, where
\begin{equation}
  \uold = \sum_{i=0}^{m-1} \nu_i u^{n-m+i},
  \qquad
  \told = \sum_{i=0}^{m-1} \nu_i t^{n-m+i},
\end{equation}
for a fixed $m \geq 1$ with $\nu_i \geq 0$ and $\sum_i \nu_i = 1$,
to find a conservative or dissipative solution:
\begin{subequations}
\begin{align}
  \ugamma = \uold + \gamma (\unew - \uold).
\intertext{As we will show, under quite general assumptions,
there is always a positive value of $\gamma$
that guarantees \eqref{eq:discrete-dissipation} or
\eqref{eq:discrete-conservation} and is very close to unity,
so that $\ugamma$ approximates $u(\tgamma)$, where}
\label{eq:t-n-gamma}
  \tgamma = \told + \gamma (\tnew - \told),
\end{align}
\end{subequations}
to the same order of accuracy as the original approximate solution $\unew$.
We will usually suppress the subscript $\gamma$ unless there is a reason to
emphasize this dependence.
Additionally, the dependence of the relaxation parameter
$\gamma$ on the time step is also not written out explicitly.

We now describe how $\gamma$ is chosen at each step.
Given an invariant $\eta$, $\gamma$ is chosen such that
\begin{equation}
\label{eq:eta-est-conservative}
  \eta(\ugamma) = \etaold,
  \qquad
  \etaold = \sum_{i=0}^{m-1} \nu_i \eta(u^{n-m+i}).
\end{equation}
Obviously, if $\eta$ is an invariant and the previous step values were computed
in a conservative way,
then
\begin{equation}
  \etaold
  =
  \sum_{i=0}^{m-1} \nu_i \eta(u^{n-m+i})
  =
  \sum_{i=0}^{m-1} \nu_i \eta(u^{0})
  =
  \eta(u^{0}).
\end{equation}
If $\eta$ is not an invariant, a suitable estimate
\begin{equation}
\label{eq:eta-est-1}
  \etanew
  \approx
  \eta(u(\tnew))
  =
  \eta(\unew) + \O( \dt^{p+1} )
\end{equation}
has to be obtained first. In particular, this estimate
$\etanew$ should be obtained such that the correct sign
of the discrete rate of change can be guaranteed. Then, $\gamma$ has to be
chosen such that
\begin{equation}
\label{eq:condition-for-gamma}
  \eta(\ugamma)
  =
  \etaest(\tgamma)
  :=
  \etaold + \gamma (\etanew - \etaold).
\end{equation}
Obviously, a suitable choice for invariants is $\etanew = \etaold$.
Hence, this approach is a strict generalization of relaxation methods
for invariants to general functionals $\eta$.

In summary, at each time step the general relaxation algorithm consists
of the following substeps:
\begin{enumerate}
  \item
  Define the values
  \begin{equation}
  \label{eq:old-values}
    \begin{pmatrix}
      \told \\
      \uold \\
      \etaold
    \end{pmatrix}
    =
    \sum_{i=0}^{m-1} \nu_i
    \begin{pmatrix}
      t^{n-m+i} \\
      u^{n-m+i} \\
      \eta(u^{n-m+i})
    \end{pmatrix}
  \end{equation}
  as base points of the secants in time, phase space, and ``entropy'' space.
  These old values are convex combinations,
  i.e.\ $\nu_i \geq 0$ and $\sum_i \nu_i = 1$.

  \item
  Compute new values $\tnew$ and
  $\unew \approx u(\tnew) + \O( \dt^{p+1} )$ using a given
  time integration scheme \eqref{eq:normal-step} and a suitable estimate
  $\etanew = \eta(\unew) + \O( \dt^{p+1} )$.

  \item
  Solve the system
  \begin{equation}
  \label{eq:relaxation}
    \begin{pmatrix}
      \tgamma \\
      \ugamma \\
      \etagamma
    \end{pmatrix}
    =
    \begin{pmatrix}
      \told \\
      \uold \\
      \etaold
    \end{pmatrix}
    + \gamma
      \begin{pmatrix}
        \tnew - \told \\
        \unew - \uold \\
        \etanew - \etaold
      \end{pmatrix}
  \end{equation}
  for $\gamma \approx 1$ and continue the integration
  with $\tgamma, \ugamma$ instead of $\tnew, \unew$.
\end{enumerate}

\begin{remark}
  Solving \eqref{eq:relaxation} means
  inserting the second equation into the third and solving the resulting
  scalar equation \eqref{eq:condition-for-gamma} for $\gamma$.
  Then, $\tgamma$ and $\ugamma$ are determined by the first and second
  equation, respectively.
\end{remark}

\begin{remark}
  Throughout this article, the notation $\O(\cdot)$ refers to the limit
  $\dt \to 0$. As mentioned above, superscripts of $t$ and $u$ denote the
  time step. Inside $\O(\cdot)$, superscripts of $\dt$ and $(\gamma-1)$
  denote exponents.
\end{remark}

\begin{remark}
\label{rem:old-values=dissipative}
  The standard choice of the old values \eqref{eq:old-values} is given by
  $m = 1$ and $\nu_0 = 1$, especially for one-step methods.
  For dissipative problems,
  if $m = 1$ and the starting values satisfy
  $\eta(u^{k-1}) \leq \eta(u^{k-2}) \leq \dots \leq \eta(u^{0})$,
  the relaxation approach described in the following will guarantee the
  slightly stronger inequality
  \begin{equation}
    \eta(u^{n})
    \leq \eta(u^{n-1}) \leq \eta(u^{n-2}) \leq \dots \leq \eta(u^{n-k})
    \leq \dots \leq \eta(u^{0})
  \end{equation}
  instead of \eqref{eq:discrete-dissipation}
  if the estimate $\etanew$ is obtained such that the correct sign
  of the discrete rate of change can be guaranteed.
\end{remark}

\begin{remark}
  One could also consider the case $\told = \tnew$, i.e.\
  $\uold \approx u(\tnew)$.
  In that case, for Runge--Kutta methods the relaxation approach
  reduces to the projection method using an embedded pair studied in
  \cite{calvo2006preservation}, where less accuracy of $\uold$
  is needed and the new time $\tnew$ does not need to be adapted.
  Here, we focus on the case $\told < \tnew$.
\end{remark}

  General projection methods replace the numerical solution $\unew$
  with
  \begin{equation}
  \label{eq:projection}
    u^{n}_\lambda
    =
    \unew + \lambda \Phi
    \approx
    u(t^{n}),
  \end{equation}
  where $\Phi$ is a specified search direction and $\lambda$ is chosen
  such that $\eta(u^{n}_\lambda) = \etanew$.
  Projection methods do not modify the new time $\tnew = t^{n}$.
  The projection method used most often in applications is orthogonal
  projection, where $\Phi$ is chosen to minimize the distance
  $\| \unew - u^{n}_\lambda \|$. Often, simplified Newton iterations
  are used in such projection methods \cite[Section~IV.4]{hairer2006geometric}.

The orthogonal projection and relaxation modifications of time integration
schemes are visualized in Figure~\ref{fig:relaxation-visualization} for
$m = 1$.
For conservative problems, both relaxation and orthogonal projection yield
results on the same level set of the invariant $\eta$.
If $\eta$ is convex and the baseline method is anti-dissipative, the
relaxation approach decreases the actual time step $\gamma \dt$,
i.e.\ $\gamma < 1$.
If the baseline scheme is dissipative, relaxation yields larger effective
time steps $\gamma \dt$, i.e.\ $\gamma > 1$.

For dissipative problems, the corrected numerical solutions of orthogonal
projection and relaxation methods are in general on different level sets
of $\eta$. For both anti-dissipative and dissipative baseline schemes,
the value of $\eta(\ugamma)$ is closer to $\etanew$
than $\eta(\unew)$. This is in accordance with the adaptation of the time
\eqref{eq:t-n-gamma}: while $\etanew$ is an approximation
at time $\tnew$, $\eta(\ugamma)$ is an approximation at $\tgamma$.

\begin{figure}[hbt]
\centering
  \hspace*{\fill}
  \begin{subfigure}[t]{0.44\textwidth}
  \centering
    \includegraphics[width=\textwidth]{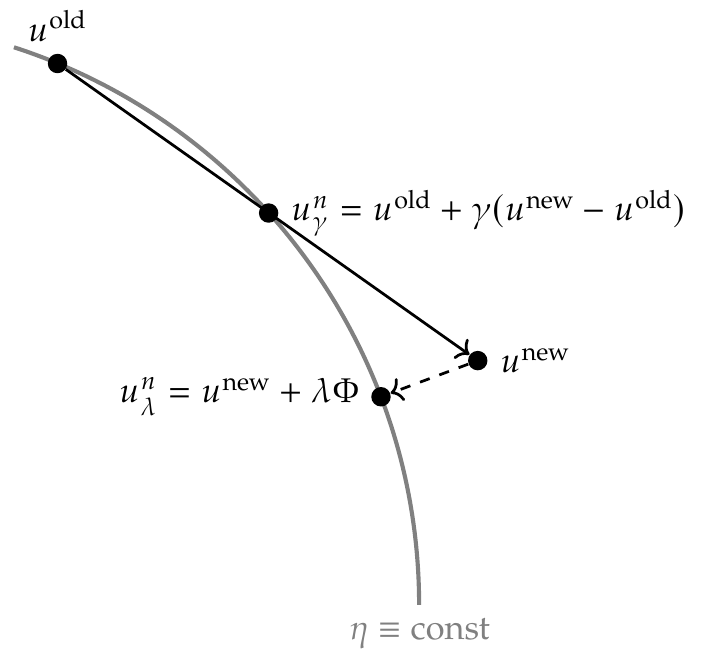}
    \caption{Conservative problem, anti-dissipative method.}
  \end{subfigure}%
  \hspace*{\fill}
  \begin{subfigure}[t]{0.44\textwidth}
  \centering
    \includegraphics[width=\textwidth]{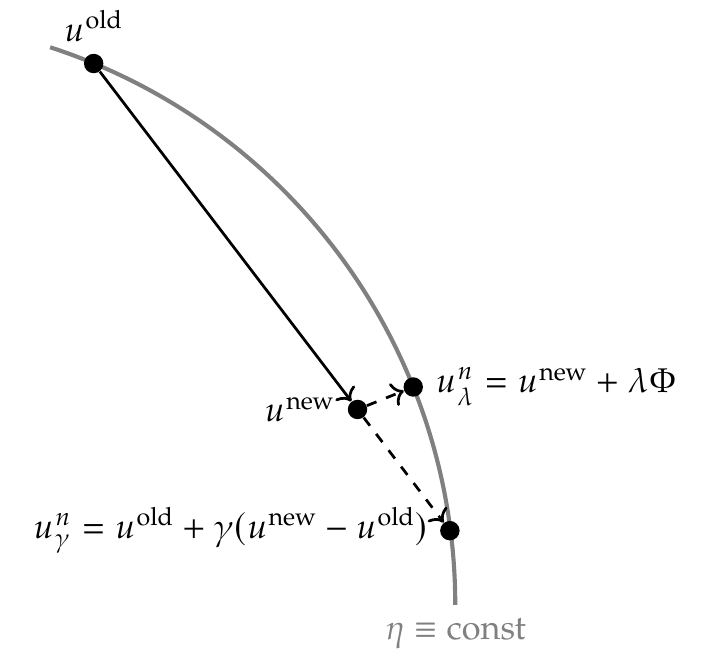}
    \caption{Conservative problem, dissipative method.}
  \end{subfigure}%
  \hspace*{\fill}
  \\
  \hspace*{\fill}
  \begin{subfigure}[t]{0.44\textwidth}
  \centering
    \includegraphics[width=\textwidth]{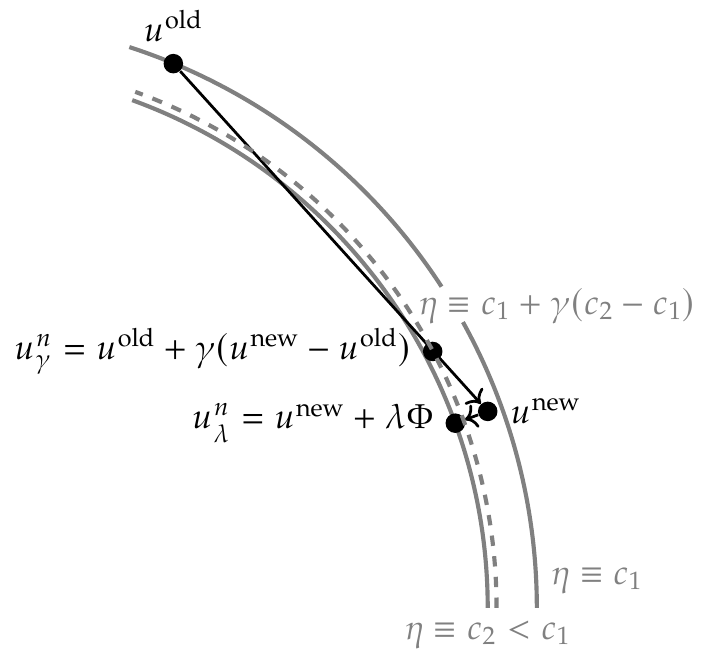}
    \caption{Dissipative problem, anti-dissipative method.}
  \end{subfigure}%
  \hspace*{\fill}
  \begin{subfigure}[t]{0.44\textwidth}
  \centering
    \includegraphics[width=\textwidth]{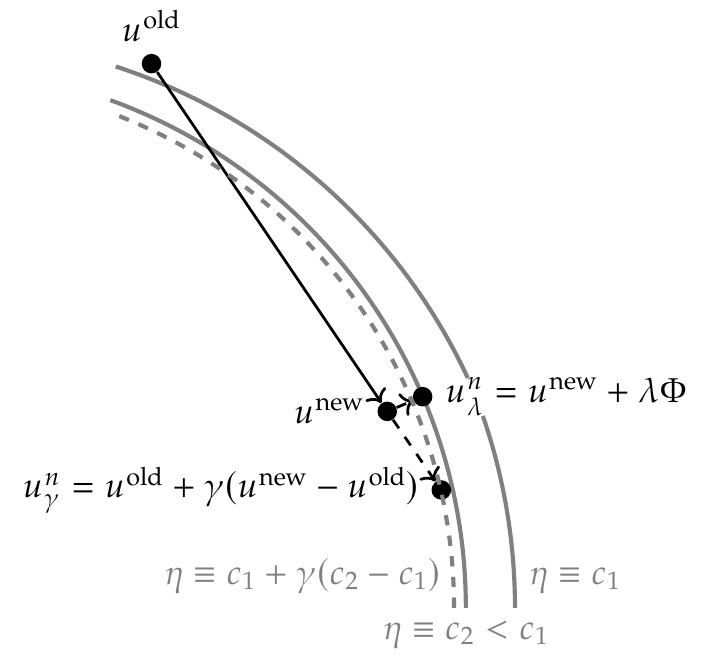}
    \caption{Dissipative problem, dissipative method.}
  \end{subfigure}%
  \hspace*{\fill}
  \caption{Visualization of the orthogonal projection and relaxation
           approaches for conservative/dissipative problems and
           anti-dissipative/dissipative time integration methods
           for $m = 1$ in \eqref{eq:old-values}.}
  \label{fig:relaxation-visualization}
\end{figure}

\begin{remark}
\label{rem:alternative-dissipative-relaxation}
  Based on the sketches shown in Figure~\ref{fig:relaxation-visualization},
  one can expect that it is also possible to choose $\widetilde\gamma$
  such that $\eta(u^{n}_{\widetilde\gamma}) = \etanew$
  for dissipative problems.
  At least for convex problems visualized there, there will be two solutions
  $\widetilde\gamma$ for sufficiently small time steps $\dt$.
  One of these solutions is near unity and the other one is closer to zero.

  In order to solve $\eta(u^{n}_{\widetilde\gamma}) = \etanew$,
  $\widetilde\gamma$ must deviate more from unity than a solution $\gamma$
  of $\eta(\ugamma) = \etaest(\tgamma)$, cf.\ \eqref{eq:condition-for-gamma}.
  Since $\gamma = 1 + \O( \dt^{p-1} )$ for a $p$th order baseline scheme
  as will be shown below, $\widetilde\gamma$ cannot be closer to unity
  than $\O( \dt^{p-1} )$. In the following, we will prove
  \begin{equation}
    \ugamma = u(\tnew) + \O( \dt^{p-1} )
    \quad \text{and} \quad
    \ugamma = u(\tgamma) + \O( \dt^{p+1} ).
  \end{equation}
  Hence, $\etanew$ is only an $\O( \dt^{p-1} )$
  approximation at $\tgamma$ and cannot be better at
  $t^{n}_{\widetilde\gamma}$.
  Thus, solving $\eta(u^{n}_{\widetilde\gamma}) = \etanew$
  will lead to some order reduction and is not pursued further.
\end{remark}

Following the development of the relaxation approach for Runge--Kutta
methods \cite{ketcheson2019relaxation,ranocha2020relaxation}, the
accuracy and suitability of general relaxation time integration methods
is studied in three steps. Firstly, accuracy of the relaxed approximation
\eqref{eq:relaxation} is studied given assumptions on the relaxation parameter
$\gamma \approx 1$. Secondly, existence and accuracy of a suitable
relaxation parameter $\gamma$ satisfying \eqref{eq:condition-for-gamma}
is studied at first for convex $\eta$ and then for general $\eta$.
Finally, methods for computing $\etanew$ are given.

\subsection{Accuracy of the Solution \texorpdfstring{$u$}{u} for Relaxation Methods}
\label{sec:accuracy-u}

Before proving the accuracy of $\ugamma$, we introduce the following
\begin{lemma}
\label{lem:old-values}
  For a smooth function $\phi$, define
  \begin{equation}
    \phiold = \sum_{i=0}^{m-1} \nu_i \phi(u^{n-m+i}),
  \end{equation}
  where $\nu_i$ and $m$ are as in \eqref{eq:old-values}.
  Then, $\phiold = \phi(\uold) + \O( \dt^2 )$.
\end{lemma}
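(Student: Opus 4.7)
The plan is to Taylor-expand $\phi$ around $\uold$, exploit the fact that $\uold$ is the same convex combination of the $u^{n-m+i}$ with the same weights $\nu_i$, and conclude that the first-order terms cancel.

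First, I would observe that for bounded indices $i, j \in \{0, \dots, m-1\}$, the differences $u^{n-m+i} - u^{n-m+j}$ are $\O(\dt)$. Indeed, each $u^{n-m+i}$ approximates $u(t^{n-m+i})$, the times $t^{n-m+i}$ lie in a window of width $(m-1)\dt = \O(\dt)$, and $u$ is smooth. Writing
\begin{equation*}
  u^{n-m+i} - \uold
  = \sum_{j=0}^{m-1} \nu_j \bigl( u^{n-m+i} - u^{n-m+j} \bigr),
\end{equation*}
and using $\sum_j \nu_j = 1$, it follows that $u^{n-m+i} - \uold = \O(\dt)$ as well.

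Next, for each $i$, expand
\begin{equation*}
  \phi(u^{n-m+i})
  = \phi(\uold) + \phi'(\uold) \bigl( u^{n-m+i} - \uold \bigr) + \O\bigl( \dt^2 \bigr),
\end{equation*}
using smoothness of $\phi$ and the $\O(\dt)$ bound above. Multiplying by $\nu_i$, summing over $i$, and using $\sum_i \nu_i = 1$ together with $\sum_i \nu_i u^{n-m+i} = \uold$, the linear term vanishes:
\begin{equation*}
  \phiold
  = \phi(\uold) + \phi'(\uold) \Bigl( \sum_{i=0}^{m-1} \nu_i u^{n-m+i} - \uold \Bigr) + \O\bigl( \dt^2 \bigr)
  = \phi(\uold) + \O\bigl( \dt^2 \bigr),
\end{equation*}
which is the claim.

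There is no real obstacle here; the only substantive point is the cancellation of the first-order term, which is built into the definitions in \eqref{eq:old-values}. The only thing to be slightly careful about is that $u^{\text{old}}$ is a convex combination of numerical values (not of the exact solution), but since we only need $u^{n-m+i} - \uold = \O(\dt)$ and this follows directly from the identity $\sum_j \nu_j = 1$ together with smoothness of $u$, this causes no difficulty. The same expansion in fact shows that higher-order cancellations are also possible if one chooses special weights, but they are not needed for the $\O(\dt^2)$ statement.
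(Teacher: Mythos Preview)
Your proof is correct and follows essentially the same approach as the paper: a first-order Taylor expansion together with the convex-combination identity $\sum_i \nu_i = 1$. The only cosmetic difference is that you expand around $\uold$ itself (so the linear term vanishes identically), whereas the paper expands both $\phiold$ and $\phi(\uold)$ around $u^{n-m}$ and then subtracts; your choice of base point makes the cancellation slightly more transparent but is not a materially different argument.
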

\begin{proof}
  Consider the expansions
  \begin{equation}
    \phiold
    =
    \sum_{i=0}^{m-1} \nu_i \phi(u^{n-m+i})
    =
    \sum_{i=0}^{m-1} \nu_i \left(
      \phi(u^{n-m}) + \phi'(u^{n-m}) (u^{n-m+i} - u^{n-m})
    \right) + \O( \dt^2 )
  \end{equation}
  and
  \begin{equation}
    \phi(\uold)
    =
    \phi\biggl( \sum_{i=0}^{m-1} \nu_i u^{n-m+i} \biggr)
    =
    \phi(u^{n-m})
    + \phi'(u^{n-m}) \biggl( \sum_{i=0}^{m-1} \nu_i u^{n-m+i} - u^{n-m} \biggr)
    + \O( \dt^2 ).
  \end{equation}
  Because of $\sum_i \nu_i = 1$, we have
  $\phiold - \phi(\uold) = \O( \dt^2 )$.
\end{proof}
Note that in general
\begin{equation}
\begin{aligned}
  \uold
  =
  \sum_{i=0}^{m-1} \nu_i u^{n-m+i}
  &=
  \sum_{i=0}^{m-1} \nu_i u(t^{n-m+i}) + \O( \dt^{p+1} )
  \\
  &\neq
  u\biggl( \sum_{i=0}^{m-1} \nu_i t^{n-m+i} \biggr) + \O( \dt^{p+1} )
  =
  u(\told) + \O( \dt^{p+1} ),
\end{aligned}
\end{equation}
except for $m = 1$ or similarly special choices of $\nu = (\nu_i)_i$.
In general, $\uold = u(\told) + \O( \dt^2 )$.

\begin{lemma}
\label{lem:accuracy-u}
  If the method \eqref{eq:normal-step} is of order $p \geq 2$
  and $\gamma = 1 + \O( \dt^{p-1} )$, the relaxation
  solution \eqref{eq:relaxation} is also of order $p$.
\end{lemma}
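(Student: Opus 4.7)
The plan is to reduce everything to a Taylor expansion around $\tnew$. I would start by rewriting the relaxed quantities in the suggestive form
\begin{equation*}
  \ugamma = \unew + (\gamma-1)(\unew - \uold), \qquad
  \tgamma = \tnew + (\gamma-1)(\tnew - \told),
\end{equation*}
so that in the limit $\gamma\to 1$ the scheme recovers the underlying step exactly. Since $\tnew-\told = \O(\dt)$ and, by Lemma~\ref{lem:old-values} applied to $\phi=\mathrm{id}$ together with the $p$th-order consistency of \eqref{eq:normal-step}, also $\unew-\uold = \O(\dt)$, the hypothesis $\gamma-1 = \O(\dt^{p-1})$ immediately gives $\ugamma - \unew = \O(\dt^p)$ and $\tgamma - \tnew = \O(\dt^p)$.

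Next I would Taylor expand the exact solution $u(\tgamma)$ around $\tnew$ to first order,
\begin{equation*}
  u(\tgamma) = u(\tnew) + (\gamma-1)(\tnew-\told)\, u'(\tnew) + \O\bigl((\gamma-1)^2 \dt^2\bigr),
\end{equation*}
and subtract it from $\ugamma$. The $\O(\dt^{p+1})$ error of the baseline method takes care of $\unew - u(\tnew)$, while the quadratic remainder is $\O(\dt^{2p}) \subseteq \O(\dt^{p+1})$ because $p\geq 2$. Everything therefore reduces to estimating the linear-in-$(\gamma-1)$ correction
\begin{equation*}
  (\gamma-1)\bigl[(\unew - \uold) - (\tnew - \told)\, u'(\tnew)\bigr].
\end{equation*}

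The heart of the proof is showing that the bracket above is $\O(\dt^2)$, because then multiplication by $\gamma-1 = \O(\dt^{p-1})$ yields exactly the desired $\O(\dt^{p+1})$. For this I would write $\unew = u(\tnew) + \O(\dt^{p+1})$ from the order of the scheme, and $\uold = u(\told) + \O(\dt^2)$ from the remark immediately preceding the lemma (which follows from Lemma~\ref{lem:old-values} with $\phi = \mathrm{id}$ and uses $p\geq 2$). Then a one-term Taylor expansion $u(\told) = u(\tnew) - (\tnew-\told)u'(\tnew) + \O(\dt^2)$ makes the $u'(\tnew)$ term cancel cleanly, leaving $\O(\dt^2)$ as required.

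The main obstacle is purely bookkeeping: the old values are only second-order accurate in $\dt$ (not $p+1$-order), so one must be careful that the $p-1$ powers of $\dt$ gained from $\gamma-1$ combine with the $\dt^2$ from the bracket — and not with anything worse — to produce $\dt^{p+1}$. The role of the hypothesis $p\geq 2$ is exactly to make the quadratic-in-$(\gamma-1)$ remainder from the Taylor expansion harmless; if one only had $p=1$ it would be of order $\dt^0$ and the argument would collapse.
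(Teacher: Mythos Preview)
Your proof is correct and follows essentially the same approach as the paper: rewrite $\ugamma = \unew + (\gamma-1)(\unew-\uold)$, Taylor expand $u(\tgamma)$ around $\tnew$, and use $\uold = u(\told) + \O(\dt^2)$ together with a first-order Taylor expansion of $u(\told)$ to see that the linear-in-$(\gamma-1)$ contribution is $\O((\gamma-1)\dt^2) = \O(\dt^{p+1})$. The paper simply performs the substitutions $\unew\to u(\tnew)$ and $\uold\to u(\told)$ first and then Taylor expands the difference, whereas you isolate the bracket $(\unew-\uold) - (\tnew-\told)u'(\tnew)$ explicitly, but the content is identical.
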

\begin{proof}
  Use the accuracy of the baseline method \eqref{eq:normal-step}
  and apply Lemma~\ref{lem:old-values} to get the expansion
  \begin{equation}
  \begin{aligned}
    \ugamma
    &=
    \unew + (\gamma - 1) ( \unew - \uold )
    \\
    &=
    u(\tnew)
    + (\gamma - 1) \bigl( u(\tnew) - u(\told) \bigr)
    + \O( \dt^{p+1} )
    + \O\bigl( (\gamma - 1) \dt^2 \bigr)
    \\
    &=
    u(\tnew)
    + u'(\tnew) (\gamma - 1) (\tnew - \told)
    + \O( \dt^{p+1} )
    + \O\bigl( (\gamma - 1) \dt^2 \bigr).
  \end{aligned}
  \end{equation}
  Subtracting the Taylor expansion
  \begin{equation}
  \begin{aligned}
    u(\tgamma)
    &=
    u(\tnew + (\gamma-1) (\tnew - \told))
    \\
    &=
    u(\tnew)
    + u'(\tnew) (\gamma - 1) (\tnew - \told)
    + \O\bigl( (\gamma-1)^2 \dt^2 \bigr)
  \end{aligned}
  \end{equation}
  results in the estimate
  \begin{equation}
    \ugamma - u(\tgamma)
    =
    \O( \dt^{p+1} )
    + \O\bigl( (\gamma-1)^2 \dt^2 \bigr)
    + \O\bigl( (\gamma - 1) \dt^2 \bigr)
    =
    \O( \dt^{p+1} ).
  \end{equation}
\end{proof}

\begin{remark}
  This basic argument (for $m = 1$, i.e.\ $\uold = u^{n-1}$) proves also the accuracy
  of relaxation Runge--Kutta methods if $\gamma = 1 + \O( \dt^{p-1} )$;
  cf.\ \cite{ketcheson2019relaxation,ranocha2020relaxation}.
  In particular, it simplifies the proof of
  \cite[Theorem~2.7]{ketcheson2019relaxation} by avoiding the usual
  Runge--Kutta order conditions.
  In addition, the result holds also for more general schemes such
  as the class of general linear methods \cite{butcher2016numerical}
  or (modified) Patankar--Runge--Kutta methods \cite{burchard2003high}.
\end{remark}

\subsection{Existence and Accuracy of \texorpdfstring{$\gamma$}{γ} for Relaxation Methods for Convex Entropies}
\label{sec:accuracy-gamma-convex}

\begin{lemma}
\label{lem:accuracy-gamma-convex}
  Consider a relaxation method \eqref{eq:relaxation} based on a
  time integration method of order $p \geq 2$.

  If $\eta$ is a convex entropy for the ODE \eqref{eq:ode}, $\dt$ is sufficiently
  small, and $\eta''(\uold)\bigl( f(\uold), f(\uold) \bigr) \neq 0$,
  then there is a unique $\gamma > 0$ that solves
  \eqref{eq:relaxation}.
  This $\gamma$ satisfies $\gamma = 1 + \O( \dt^{p-1} )$.
\end{lemma}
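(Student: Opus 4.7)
The plan is to reformulate \eqref{eq:condition-for-gamma} as the scalar equation $r(\gamma)=0$ with
\begin{equation*}
  r(\gamma) := \eta\bigl(\uold + \gamma(\unew - \uold)\bigr) - \etaold - \gamma(\etanew - \etaold),
\end{equation*}
and to apply a quantitative Taylor / implicit function argument on a $\dt$-independent neighborhood of $\gamma = 1$. First I would compute $r''(\gamma) = \eta''(\ugamma)(\unew - \uold,\, \unew - \uold)$ and, using $\unew - \uold = (\tnew - \told)\, f(\uold) + \O(\dt^2)$ (which follows from the order of the baseline method together with Lemma~\ref{lem:old-values} and the remark after it), rewrite it as $r''(\gamma) = (\tnew - \told)^2\, \eta''(\uold)\bigl(f(\uold), f(\uold)\bigr) + \O(\dt^3)$ uniformly in $\gamma$. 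Convexity of $\eta$ makes $\eta''$ positive semidefinite, and the nondegeneracy hypothesis upgrades this to $r''(\gamma) \geq c_1 \dt^2 > 0$ for all sufficiently small $\dt$; thus $r$ is strictly convex and has at most two zeros. The same Taylor tools applied via Lemma~\ref{lem:old-values} also yield $r(0) = \eta(\uold) - \etaold \leq \O(\dt^3)$ (non-positive up to higher-order terms, by convexity of $\eta$), a fact I would use later for uniqueness.

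Next I would estimate $r(1)$ and $r'(1)$. The entropy-estimate assumption \eqref{eq:eta-est-1} gives $r(1) = \eta(\unew) - \etanew = \O(\dt^{p+1})$. The key quantitative step is to prove $r'(1) \geq c_2 \dt^2 > 0$: starting from $r'(1) = \eta'(\unew)(\unew - \uold) - (\etanew - \etaold)$, naive bounds produce only $\O(\dt)$, so I have to expose the cancellation of the first-order contributions by Taylor-expanding $\eta'(\unew)$ and $\eta(\unew) - \eta(\uold)$ around $\uold$ and by replacing $\etaold$ with $\eta(\uold)$ via Lemma~\ref{lem:old-values} (which introduces a non-negative, $\eta''$-weighted variance-type correction stemming from the convex combination defining $\uold$). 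After cancellation one obtains
\begin{equation*}
  r'(1) = \tfrac{1}{2}(\tnew - \told)^2 \eta''(\uold)\bigl(f(\uold), f(\uold)\bigr) + [\text{non-negative variance term}] + \O(\dt^3),
\end{equation*}
so the nondegeneracy hypothesis bounds the first summand below by a positive multiple of $\dt^2$ and the second summand is $\geq 0$ by convexity.

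Finally, I would combine these estimates to locate the root. A Taylor expansion of $r$ about $\gamma = 1$ gives $0 = r(1) + r'(1)(\gamma-1) + \tfrac{1}{2} r''(\xi)(\gamma - 1)^2$, and the implicit function theorem applied to this rescaled equation (with $r'(1) \gtrsim \dt^2$ and $r''$ uniformly bounded) delivers a unique solution $\gamma^\ast$ in a $\dt$-independent neighborhood of $1$ satisfying
\begin{equation*}
  \gamma^\ast - 1 = -\frac{r(1)}{r'(1)} + \O\bigl((\gamma^\ast - 1)^2\bigr) = \O(\dt^{p+1}/\dt^2) = \O(\dt^{p-1}),
\end{equation*}
where the quadratic correction is of order $\dt^{2p}$, subdominant since $p \geq 2$. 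In particular $\gamma^\ast > 0$ for small $\dt$. Uniqueness among all positive $\gamma$ follows because strict convexity allows $r$ at most two zeros: $\gamma^\ast$ lies on the increasing side of $r$'s minimum (which sits just to the left of $1$ because $r'(1) > 0$), and any second zero lies strictly to the left of that minimum; since $r(0) \leq \O(\dt^3)$ while $r$ is decreasing there, that second zero is forced to be non-positive. The principal obstacle is precisely the lower bound $r'(1) \gtrsim \dt^2$: without both the first-order Taylor cancellation and the nondegeneracy hypothesis $\eta''(\uold)(f(\uold), f(\uold)) \neq 0$, the ratio $r(1)/r'(1)$ could not be controlled and the estimate $\gamma = 1 + \O(\dt^{p-1})$ would break down.
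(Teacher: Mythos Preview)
Your argument is essentially the paper's: both define the same scalar function $r(\gamma)$, use convexity of $\eta$, and extract $\gamma-1=\O(\dt^{p-1})$ from $r(1)=\O(\dt^{p+1})$ together with $r'(1)\sim\dt^2$. The paper additionally verifies $r'(0)<0$ to locate the unique positive root directly, whereas you argue via strict convexity and the position of the minimum; both routes are fine. One point to tighten: Jensen's inequality gives $r(0)=\eta(\uold)-\etaold\le 0$ \emph{exactly}, not merely ``$\le\O(\dt^3)$''; the exact non-positivity is what forces the second root (if any) to be non-positive, since a strictly positive $r(0)$, however small, would permit a spurious root in $(0,\gamma_{\min})$. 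On the other hand, your explicit treatment of the non-negative variance correction $\etaold-\eta(\uold)$ inside $r'(1)$ is slightly more careful than the paper's expansion, which absorbs it into an $\O(\dt^3)$ remainder (a step that is strictly correct only for $m=1$; the conclusion $r'(1)>0$ survives for general $m$ precisely because that correction has the right sign).
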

\begin{proof}
  The function $r$ defined in a neighborhood of $[0, 1]$ by
  \begin{equation}
  \label{eq:r-gamma}
  \begin{aligned}
    r(\gamma)
    &=
    \eta(\ugamma) - \etaest(\tgamma)
    \\
    &=
    \eta\bigl( \uold + \gamma ( \unew - \uold ) \bigr)
    - \etaold - \gamma \bigl( \etanew - \etaold \bigr)
  \end{aligned}
  \end{equation}
  is convex, since $\eta$ is convex. Moreover,
  \begin{equation}
    r(0)
    =
    \eta(\uold) - \etaold
    =
    \eta\biggl( \sum_{i=0}^{m-1} \nu_i u^{n-m+i} \biggr)
    - \sum_{i=0}^{m-1} \nu_i \eta(u^{n-m+i})
    \leq 0,
  \end{equation}
  since $\uold$ is a convex combination and $\eta$ is convex. Furthermore,
  \begin{equation}
  \begin{aligned}
    r'(0)
    &=
    \eta'(\uold) ( \unew - \uold )
    - \etanew + \etaold
    \\
    &=
    \etaold - \eta(\unew)
    + \eta'(\uold) ( \unew - \uold )
    + \O( \dt^{p+1} ).
  \end{aligned}
  \end{equation}
  Because of
  \begin{equation}
    \sum_{i=0}^{m-1} \nu_i \eta'(u^{n-m+i})
    =
    \eta'(\uold) + \O( \dt^2 ),
  \end{equation}
  cf.\ Lemma~\ref{lem:old-values}, we have
  \begin{equation}
  \begin{aligned}
    r'(0)
    &=
    - \frac{1}{2} \eta''(\uold)(\unew - \uold, \unew - \uold)
    + \O( \dt^{3} )
    \\
    &=
    - \frac{1}{2} (\tnew - \told)^2 \eta''(\uold)(f(\uold), f(\uold))
    + \O( \dt^{3} )
    < 0
  \end{aligned}
  \end{equation}
  for sufficiently small $\dt > 0$.
  Here, the accuracy of the estimate \eqref{eq:eta-est-1} has been used
  in the second line.
  Similarly,
  \begin{equation}
  \label{eq:r'-at-1-entropy}
  \begin{aligned}
    r'(1)
    &=
    \eta'(\unew) ( \unew - \uold )
    - \etanew + \etaold
    \\
    &=
    \etaold - \eta(\unew)
    - \eta'(\unew) ( \uold - \unew )
    + \O( \dt^{p+1} )
    \\
    &=
    \frac{1}{2} \eta''(\unew)(\uold - \unew, \uold - \unew)
    + \O( \dt^{3} )
    \\
    &=
    \frac{1}{2} (\tnew - \told)^2 \eta''(\unew)(f(\uold), f(\uold))
    + \O( \dt^{3} )
    > 0
  \end{aligned}
  \end{equation}
  for sufficiently small $\dt > 0$.
  Hence, $r$ has a unique positive root $\gamma$.

  Because of the accuracy of the baseline scheme,
  $r(1) = \O( \dt^{p+1} )$.
  Using \eqref{eq:r'-at-1-entropy}, $r'(1) = c \dt^2 + \O( \dt^{3} )$
  with $c > 0$. Hence, the root $\gamma$ of $r$ satisfies
  $\gamma = 1 + \O( \dt^{p-1} )$.
\end{proof}

\begin{remark}
  Instead of the condition $\eta''(\uold)\bigl( f(\uold), f(\uold) \bigr)
  \neq 0$, similar conditions using other step/stage values can be used by
  performing the Taylor expansions around them.
\end{remark}

\begin{remark}
  As can be seen from Figure \ref{fig:relaxation-visualization},
  choosing $\gamma$ slightly smaller than the root of $r(\gamma)=0$
  is a way to introduce additional dissipation.
\end{remark}

\begin{remark}
\label{rem:explicit-gamma}
  If the convex entropy is a squared inner-product norm, i.e.\ if
  $\eta(u) = \frac{1}{2} \|u\|^2$ in a Hilbert space, the relaxation
  parameter can be calculated explicitly as
  \begin{equation}
    \gamma
    =
    \begin{dcases}
      \frac{-b + \sqrt{b^2 - 4 a c}}{2 c}, & a \neq 0, \\
      \frac{-b}{c}, & a = 0,
    \end{dcases}
  \end{equation}
  where
  \begin{equation}
  \begin{gathered}
    a = \eta(\uold) - \etaold \leq 0,
    \qquad
    b = \scp{\uold}{\unew - \uold} - \etanew + \etaold,
    \\
    c = \eta(\unew - \uold) \geq 0.
  \end{gathered}
  \end{equation}
\end{remark}

\begin{theorem}
\label{thm:convex}
  Consider a relaxation method \eqref{eq:relaxation} based on a
  time integration method of order $p \geq 2$.

  If $\eta$ is a convex entropy for the ODE \eqref{eq:ode}, $\dt$ is sufficiently
  small, and $\eta''(\uold)\bigl( f(\uold), f(\uold) \bigr) \neq 0$,
  then there is a unique $\gamma > 0$ that satisfies the relaxation
  condition \eqref{eq:condition-for-gamma} and the resulting relaxation
  method is of order $p$.
\end{theorem}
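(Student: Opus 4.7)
The plan is to combine the two preceding lemmas, since the theorem is essentially their synthesis. First, I would invoke Lemma~\ref{lem:accuracy-gamma-convex}: its hypotheses coincide exactly with those of the theorem (convex entropy $\eta$, sufficiently small $\dt$, and the non-degeneracy condition $\eta''(\uold)\bigl(f(\uold),f(\uold)\bigr)\neq 0$), so one immediately obtains a unique positive $\gamma$ solving the scalar relaxation equation \eqref{eq:condition-for-gamma}, together with the quantitative estimate $\gamma = 1 + \O(\dt^{p-1})$.

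Next, I would feed this estimate into Lemma~\ref{lem:accuracy-u}. That lemma precisely guarantees that whenever the baseline scheme has order $p \geq 2$ and the relaxation parameter satisfies $\gamma = 1 + \O(\dt^{p-1})$, the resulting iterate $\ugamma$ defined by \eqref{eq:relaxation} is an order-$p$ approximation to $u(\tgamma)$. Both the order hypothesis $p \geq 2$ and the accuracy hypothesis on $\gamma$ are in place, so this step is immediate.

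I do not anticipate a real obstacle here: the proof reduces to chaining the two lemmas in order. The only conceptual check is that the $\gamma$ produced in Lemma~\ref{lem:accuracy-gamma-convex} is the same one entering Lemma~\ref{lem:accuracy-u} — which it is by construction — and that the non-degeneracy condition, which is where the substantive analytic work actually sits, has already been fully exploited inside Lemma~\ref{lem:accuracy-gamma-convex} to pin down the signs of $r'(0)$ and $r'(1)$ and to produce the order estimate on $\gamma$. Consequently, the theorem follows without any additional calculation; the contribution of the theorem statement itself is mostly organizational, packaging existence, uniqueness, and order into a single clean assertion.
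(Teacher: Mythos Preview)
Your proposal is correct and matches the paper's own proof exactly: the paper simply writes ``Combine Lemma~\ref{lem:accuracy-u} and Lemma~\ref{lem:accuracy-gamma-convex}.'' Your additional commentary on why the chaining works is accurate and adds nothing that needs to be changed.
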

\begin{proof}
  Combine Lemma~\ref{lem:accuracy-u} and Lemma~\ref{lem:accuracy-gamma-convex}.
\end{proof}

\subsection{Existence and Accuracy of \texorpdfstring{$\gamma$}{γ} for Relaxation Methods for General Functionals}
\label{sec:accuracy-gamma-general}

\begin{theorem}
\label{thm:general}
  Consider a relaxation method \eqref{eq:relaxation} based on a
  time integration method of order $p \geq 2$.
  If $\eta$ is a general (i.e.\ not necessarily convex) smooth
  functional of \eqref{eq:ode}, $\dt$ is sufficiently small, and
  \begin{equation}
    \eta'(\unew) \frac{\unew - \uold}{\| \unew - \uold \|}
    =
    c \dt + \O( \dt^{2} ),
    \quad
    \text{with } c \neq 0,
  \end{equation}
  then there is a unique $\gamma  = 1 + \O( \dt^{p-1} )$ that
  satisfies the relaxation condition \eqref{eq:condition-for-gamma}
  and the resulting relaxation method is of order $p$.
\end{theorem}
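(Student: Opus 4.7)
The plan is to mirror the two-step architecture of Theorem~\ref{thm:convex}: reduce via Lemma~\ref{lem:accuracy-u} to proving existence, uniqueness, and the localization $\gamma = 1 + \O(\dt^{p-1})$ of a solution to \eqref{eq:condition-for-gamma}, and use the hypothesis on $\eta'(\unew)(\unew-\uold)/\|\unew-\uold\|$ as a replacement for the convexity-based non-degeneracy exploited in Lemma~\ref{lem:accuracy-gamma-convex}. Concretely, I would again define
$$r(\gamma) := \eta(\ugamma) - \etaest(\tgamma) = \eta(\uold + \gamma v) - \etaold - \gamma(\etanew - \etaold), \qquad v := \unew - \uold,$$
and observe immediately that $r(1) = \eta(\unew) - \etanew = \O(\dt^{p+1})$ by the accuracy of the baseline scheme and of the estimate $\etanew$. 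Since $\|v\| = \O(\dt)$, the hypothesis rewrites cleanly as
$$\eta'(\unew) v = c \, \dt \, \|v\| + \O(\dt^{3}),$$
a term of order exactly $\dt^{2}$ with nonzero leading coefficient.

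Next I would compute $r'(1) = \eta'(\unew) v - (\etanew - \etaold)$ and combine the hypothesis with the expansion $\etaold = \eta(\uold) + \O(\dt^{2})$ from Lemma~\ref{lem:old-values}, the estimate $\etanew = \eta(\unew) + \O(\dt^{p+1})$, and a Taylor expansion of $\eta(\unew) - \eta(\uold)$ around $\uold$, in order to show that $r'(1) = \Theta(\dt^{2})$ with a leading coefficient that is controlled from below by $|c|$. This plays the role in the general setting that the assumption $\eta''(\uold)(f(\uold), f(\uold)) \neq 0$ played in the convex case: it provides the strict non-degeneracy of the derivative at $\gamma = 1$ that makes a local inversion possible. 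Since $r$ is smooth in $\gamma$ and $r''(\gamma) = \eta''(\ugamma)(v, v) = \O(\dt^{2})$ is uniformly bounded on a neighborhood of $1$, a Newton / implicit function theorem argument then produces a unique root $\gamma$ of $r$ near $1$ with
$$\gamma - 1 = -\frac{r(1)}{r'(1)} + \O\bigl((\gamma - 1)^{2}\bigr) = \O(\dt^{p-1}),$$
where the final estimate is the quotient of a numerator $\O(\dt^{p+1})$ by a denominator $\Theta(\dt^{2})$. Invoking Lemma~\ref{lem:accuracy-u} then turns this $\gamma$ into the claimed order-$p$ accuracy of the relaxation method.

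The hard part is the middle step of confirming that the leading $\dt^{2}$ behavior of $r'(1)$ genuinely survives all cancellations. After substituting the hypothesis into $\eta'(\unew) v$ and expanding $\etanew - \etaold$ through $\O(\dt^{3})$, several $\dt^{2}$ contributions appear (from $\etaold - \eta(\uold)$ via Lemma~\ref{lem:old-values}, from the second-order Taylor term $\tfrac{1}{2}\eta''(\uold)(v,v)$, and from $c\,\dt\,\|v\|$), and one must track precisely which of them cancel in $r'(1) = \eta'(\unew) v - (\etanew - \etaold)$. The assumption $c \neq 0$ is what prevents the residue from degenerating below $\dt^{2}$, and this is precisely the scaling needed for the ratio $-r(1)/r'(1)$ to be of the desired order $\dt^{p-1}$; any weaker hypothesis would allow the denominator to vanish faster than the numerator and destroy the localization of $\gamma$ around $1$.
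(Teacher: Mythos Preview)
Your plan mirrors the paper's own proof in architecture: the paper simply invokes the implicit function theorem, citing \cite[Theorem~2]{calvo2010projection} and the adaptation in \cite[Proposition~2.18]{ranocha2020relaxation}, to obtain a unique $\gamma = 1+\O(\dt^{p-1})$, and then applies Lemma~\ref{lem:accuracy-u}. The paper gives no further detail; you have essentially reconstructed what that implicit-function-theorem argument would look like when unpacked.

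There is, however, a genuine gap in your middle step. Your claim that $c\neq 0$ forces $r'(1)=\Theta(\dt^{2})$ with leading coefficient controlled by $|c|$ does not follow from the expansions you list. If you Taylor-expand each $\eta(u^{n-m+i})$ around $\unew$ (rather than passing through the cruder $\etaold=\eta(\uold)+\O(\dt^{2})$), you obtain
\[
\etanew - \etaold \;=\; \eta'(\unew)\,v \;-\; \tfrac12\sum_{i} \nu_i\,\eta''(\unew)\bigl(u^{n-m+i}-\unew,\;u^{n-m+i}-\unew\bigr) \;+\; \O(\dt^{3}),
\]
so that the $\eta'(\unew)\,v$ contribution --- the only quantity the hypothesis controls --- cancels \emph{exactly} in $r'(1)=\eta'(\unew)\,v-(\etanew-\etaold)$, leaving
\[
r'(1) \;=\; \tfrac12\sum_{i} \nu_i\,\eta''(\unew)\bigl(u^{n-m+i}-\unew,\;u^{n-m+i}-\unew\bigr) \;+\; \O(\dt^{3}),
\]
which is precisely the $\eta''$ expression already appearing in Lemma~\ref{lem:accuracy-gamma-convex}. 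The constant $c$ is gone. Thus the link you assert between $c\neq 0$ and the nondegeneracy of $r'(1)$ is not the direct one; closing this requires either exhibiting explicitly how $c$ relates to the surviving $\eta''$ sum (this does work cleanly when $\etanew-\etaold$ is of order $\dt^{p+1}$, e.g.\ in the conservative case), or setting up the implicit function theorem with a rescaling different from your Newton quotient $-r(1)/r'(1)$. The paper itself does not address this point either; it defers entirely to the two cited references.
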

\begin{proof}
  Following \cite[Theorem~2]{calvo2010projection}, this proof is based on
  the implicit function theorem, e.g.\ the version of
  \cite[Section~VIII.2]{rosenlicht1986introduction}.

  Given an initial condition $u^{0}$ and a time step $\dt$, approximate solutions
  $u^{n-k+i} \approx u(t^{n-k+i})$ and $\unew \approx u(\tnew)$ are
  computed, e.g.\ via a (relaxation) LMM and a suitable starting procedure.
  In this setting, the proof of \cite[Theorem~2]{calvo2010projection} can
  be adapted, similarly to \cite[Proposition~2.18]{ranocha2020relaxation},
  yielding a unique solution $\gamma  = 1 + \O( \dt^{p-1} )$.
  Because of Lemma~\ref{lem:accuracy-u}, the relaxation method is of order $p$.
\end{proof}

\begin{remark}
  For one-step methods studied in
  \cite{ketcheson2019relaxation,ranocha2020relaxation,ranocha2020relaxationHamiltonian},
  the natural choice of $m,\nu$ in \eqref{eq:old-values} is
  $m = 1$ and $\nu_i = \delta_{i,0}$, where $\delta_{i,0}$ is the
  Kronecker delta. Then, $r(0) = 0$ in \eqref{eq:r-gamma}.
\end{remark}

\section{Suitable Estimates of \texorpdfstring{$\eta$}{η} for Relaxation Methods}
\label{sec:estimaeta-est-1}

For dissipative systems, the relaxation approach requires an
estimate of the entropy at $\tnew$ satisfying
\begin{equation}
\label{eq:entropy-inequality}
  \etanew \le \etaold
\end{equation}
in order to ensure that \eqref{eq:discrete-dissipation} is satisfied
if $m \leq k$.
We first review the approach of \cite{ranocha2020relaxation} for Runge--Kutta methods with
positive weights, showing how it fits naturally into the approach
we have just described. We then discuss how to obtain a suitable
estimate for multistep methods with positive coefficients, and for more
general methods.

\subsection{Runge--Kutta Methods with Positive Quadrature Weights}

A Runge--Kutta method with $s$ stages takes the form
\cite{butcher2016numerical,hairer2008solving}
\begin{subequations}
\label{eq:RK-step}
\begin{align}
\label{eq:RK-stages}
  y^i
  &=
  u^{n-1} + \dt \sum_{j=1}^{s} a_{ij} \, f(t^{n-1} + c_j \dt, y^j),
  \qquad i \in \set{1, \dots, s},
  \\
\label{eq:RK-final}
  \unew
  &=
  u^{n-1} + \dt \sum_{i=1}^{s} b_{i} \, f(t^{n-1} + c_i \dt, y^i).
\end{align}
\end{subequations}

Given the Runge--Kutta stage values, a natural estimate $\etanew$ is
given by using the quadrature rule of the Runge--Kutta method itself:
\begin{equation}
\label{eq:eta-est-1-RK}
  \etanew
  =
  \eta(u^{n-1}) + \dt \sum_{i=1}^s b_i (\eta' f)(y^{i}).
\end{equation}
This corresponds to the approach used in
\cite{dekker1984stability,calvo2006preservation,ketcheson2019relaxation,ranocha2020relaxation}.
The inequality \eqref{eq:entropy-inequality} is guaranteed if the weights $b_i \geq 0$.

\begin{remark}
  The adaptation of the time $\tgamma$ and this choice of the
  estimate appear to be very natural. Indeed, considering the
  augmented system
  \begin{equation}
  \label{eq:ode-t-u-eta}
    \od{}{t}
      \begin{pmatrix}
        t \\
        u(t) \\
        \eta(u(t))
      \end{pmatrix}
      =
      \begin{pmatrix}
        1 \\
        f(u(t)) \\
        (\eta' f)(u(t))
      \end{pmatrix},
  \end{equation}
  the update formula (with $\sum_i b_i = 1$)
  \begin{equation}
    \begin{pmatrix}
        \tgamma \\
        \ugamma \\
        \eta(\ugamma)
      \end{pmatrix}
      =
      \begin{pmatrix}
        t^{n-1} \\
        u^{n-1} \\
        \eta(u^{n-1})
      \end{pmatrix}
      + \gamma \dt \sum_{i=1}^{s} b_{i}
      \begin{pmatrix}
        1 \\
        f(y^i) \\
        (\eta' f)(y^{i})
      \end{pmatrix}
  \end{equation}
  is a natural discretization of \eqref{eq:ode-t-u-eta}, where the
  relaxation parameter $\gamma$ is introduced to enforce the
  consistent evolution of $\eta$.
\end{remark}

\begin{remark}
  In the context of dissipative PDEs such as second-order parabolic ones,
  \eqref{eq:eta-est-1-RK} can provide important (spatial) gradient estimates
  of the stage/step values by bounding $\eta' f$.
\end{remark}

\subsection{Linear Multistep Methods with Positive Coefficients}

A linear multistep method can be written in the form
\begin{equation}
\label{eq:LMM}
  \unew
  =
  \sum_{i = 0}^{k-1} \alpha^n_i u^{n-k+i} + \dt \sum_{i=0}^{k} \beta^n_i f^{n-k+i}.
\end{equation}
The coefficients $\alpha^n_i, \beta^n_i$ have a time index because
they depend on the sequence of step sizes.
If all coefficients $\alpha^n_i, \beta^n_i$ are non-negative,
the method itself can be used to obtain a suitable estimate
$\etanew$.
Indeed, considering again the augmented system
\eqref{eq:ode-t-u-eta}, a high-order estimate is obtained as
\begin{equation}
\label{eq:eta-est-1-SSPLMM}
  \etanew
  =
  \sum_{i \geq 0} \left(
    \alpha^n_i \eta(u^{n-k+i}) + \dt \beta^n_i (\eta' f)(u^{n-k+i})
  \right)
  =
  \eta(\unew) + \O( \dt^{p+1} ).
\end{equation}
Since $\sum_i \alpha^n_i = 1$ for any consistent method
and $\beta^n_i (\eta' f)(u^{n-k+i}) \leq 0$, the dissipation
condition \eqref{eq:entropy-inequality} is guaranteed.
Note in particular that strong stability preserving
(SSP) LMMs have non-negative coefficients \cite[Chapter~8]{gottlieb2011strong}
and can be used in this manner.

\subsection{General Time Integration Methods}

The approach of the previous two subsections relies on non-negativity
of the coefficients $b_i$ and $\alpha_i, \beta_i$, respectively.
For methods with negative coefficients, we can follow the technique
developed in \cite{calvo2010projection}.
In this approach, an estimate $\etanew$
is obtained by interpolation (continuous/dense output) and
a positive quadrature rule. Indeed, consider a quadrature rule of order
at least $p$ with nodes $\tau_i \in [t^{n-m}, \tnew]$ and positive weights
$w_i$, e.g.\ a Gauß quadrature. Compute an interpolant $y(\tau_i)$ of order
at least $p-1$ at the nodes and use
\begin{equation}
\label{eq:eta-est-1-quadrature}
  \etanew
  =
  \eta^{n-m} + \sum_i w_i (\eta' f)(y(\tau_i)).
\end{equation}
Because $w_i > 0$, the estimate is guaranteed to satisfy \eqref{eq:entropy-inequality}
for dissipative systems.

In contrast to the previous method, this approach requires additional
evaluations of $\eta'$ and $f$ at the intermediate values and is thus more
costly. On the other hand, it can be applied to general time integration
methods and does not require any special structure (besides a
continuous output formula). In particular, it is not limited to Runge--Kutta
or linear multistep methods.

\begin{remark}
  This approach is particularly interesting for linear multistep methods,
  since these schemes are often defined naturally in terms of (interpolating)
  polynomials \cite{arevalo2017grid,mohammadi2019polynomial}.
\end{remark}

\section{Relaxation Linear Multistep Methods With Fixed Coefficients}
\label{sec:fixed-fixed-coefficients}

Since orthogonal projection does not inherently involve any change in the step
size, it can be used with a fixed step size.
In contrast, relaxation methods necessarily introduce variation in the step
size, due to the parameter $\gamma$, even if the intended $\dt$ at each step is
constant.
To make use of Theorems~\ref{thm:convex} and \ref{thm:general} to guarantee
high-order accuracy, multistep relaxation methods must be implemented in a way
that takes into account the variation in the step size.
On the other hand, since $\gamma$ is very close to unity, this step-size variation
is quite small.
Thus it is interesting to know what accuracy may be obtained by relaxation
methods using the new time step \eqref{eq:t-n-gamma} but implemented with the coefficients of the
corresponding fixed step size method.
We will write simply $\alpha_i, \beta_i$ (without superscripts) to refer
to the coefficients of a fixed step size LMM.

\begin{theorem}
    Given a $k$-step LMM \eqref{eq:LMM} of order $p\ge 1$ with coefficients $(\alpha,\beta)$,
    suppose that the method is used with step sizes $\dt_{n-k+j} = \gamma_j \dt_n$,
    where $\gamma_j-1=\O(\dt^{q})$ but the coefficients are not adapted. Then
    the one-step error is $\O(\dt^{\min(p+1,q+1)})$.
    Furthermore, for Adams methods the one-step error is $\O(\dt^{\min(p+1,q+2)})$.
\end{theorem}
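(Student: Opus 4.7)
My plan is to estimate the local truncation error of the fixed-coefficient LMM applied with perturbed step sizes by comparing term-by-term against the standard fixed-step local error. Substituting the exact solution into the method gives
\begin{equation*}
    \tau_n = u(t^n) - \sum_{i=0}^{k-1} \alpha_i u(t^{n-k+i}) - \dt \sum_{i=0}^{k} \beta_i u'(t^{n-k+i}),
\end{equation*}
where $\dt$ denotes the step size passed to the formula. Defining nominal equispaced nodes $\tilde t^{n-k+i} := t^n - (k-i)\dt$, the analogous expression $\tilde\tau_n$, with $t^{n-k+i}$ replaced everywhere by $\tilde t^{n-k+i}$, is $\O(\dt^{p+1})$ by the order-$p$ hypothesis on the fixed-step method.

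Next I would control the node perturbations. Since each of the at most $k$ past step sizes differs from the nominal one by $(\gamma_j-1)\dt = \O(\dt^{q+1})$, a telescoping sum yields $t^{n-k+i} - \tilde t^{n-k+i} = \O(\dt^{q+1})$. Taylor-expanding $u$ and $u'$ around the nominal nodes then gives
\begin{equation*}
    u(t^{n-k+i}) - u(\tilde t^{n-k+i}) = \O(\dt^{q+1}), \qquad
    u'(t^{n-k+i}) - u'(\tilde t^{n-k+i}) = \O(\dt^{q+1}).
\end{equation*}
Substituting, the $\alpha$-terms of $\tau_n - \tilde\tau_n$ contribute $\O(\dt^{q+1})$, while the $\beta$-terms, carrying an extra factor of $\dt$, contribute only $\O(\dt^{q+2})$. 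Combining with the fixed-step bound yields $\tau_n = \O(\dt^{p+1}) + \O(\dt^{q+1}) = \O(\dt^{\min(p+1,q+1)})$, which is the general LMM claim.

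For Adams methods, $\alpha_i = \delta_{i,k-1}$, so the only $\alpha$-term that could generate a node perturbation is $\alpha_{k-1}\bigl(u(t^{n-1}) - u(\tilde t^{n-1})\bigr)$. Interpreting $\dt$ in the formula as the \emph{actual} current step $t^n - t^{n-1}$ forces $\tilde t^{n-1} = t^{n-1}$ and kills this contribution exactly, so only the $\beta$-term perturbations remain and the sharpened bound $\tau_n = \O(\dt^{\min(p+1,q+2)})$ follows. The main obstacle to formalising the argument is bookkeeping: one must pin down precisely which $\dt$ enters the method formula (the nominal step for the general LMM case, the actual current step for Adams) and carefully sum the node displacements over the $k$ past intervals; the Taylor expansions themselves and the promotion to a one-step error bound are routine.
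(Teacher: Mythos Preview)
Your argument is correct and is essentially the paper's proof in different clothing: the paper writes the local error as $\sum_\ell \dt^\ell C_\ell(\alpha,\beta,\gamma)\,u^{(\ell)}$, shows $C_\ell - C_\ell^{\text{unif}} = \O(\dt^q)$ for $1\le\ell\le p$ to get the general bound, and then checks that for Adams methods $C_1(\alpha,\beta,\gamma)=0$ exactly because $\alpha_{k-1}=1$ and $\delta_k=\delta_{k-1}$ --- which is precisely your node-perturbation comparison against nominal equispaced nodes, split into the $\alpha$- and $\beta$-contributions. One clarification on your closing remark: there is no ambiguity about which $\dt$ enters the formula --- in the theorem's parameterization $\dt_{n-k+j}=\gamma_j\dt_n$ with $j=k$ forces $\gamma_k=1$, so the formula's $\dt$ is the actual current step $\dt_n$ in \emph{both} the general and the Adams case, and your identity $\tilde t^{\,n-1}=t^{n-1}$ holds automatically rather than by a separate convention.
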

\begin{proof}
    The one-step error takes the form \cite[p. 133]{rjl:fdmbook},
    \cite[Eqn. (2.16)]{hadjimichael2016strong}
    \begin{align} \label{error-expansion}
        E & = \sum_{\ell=0}^\infty \dt^\ell u^{(\ell)}(\tnew) C_\ell(\alpha,\beta,\gamma)
    \end{align}
    where $C_0=\sum_j\alpha_j - 1$ and for $\ell>0$
    \begin{align} \label{error-coeff}
        C_\ell(\alpha,\beta,\gamma) = \sum_{j=0}^{k-1}\left(\Omega_j^\ell \alpha_j + \ell \Omega_j^{\ell-1}\beta_j\right) - \Omega_k^\ell
    \end{align}
    with
    \begin{align}
        \Omega_j & = \sum_{i=1}^j \frac{\dt_{n-k+i}}{\dt_n} = \sum_{i=1}^j \gamma_j = j + \sum_{i=1}^j (\gamma_j-1) = j + \delta_j,
    \end{align}
    where $\delta_j = \O(\dt^q)$.
    Since the LMM has order $p$, its coefficients satisfy the fixed step size order conditions $C_0=0$ and
    \begin{align} \label{lmm_oc}
        C_\ell^{\text{unif}} : = C_\ell(\alpha,\beta,\1) = \sum_{j=0}^{k-1}\left(j^\ell \alpha_j + \ell j^{\ell-1} \beta_j\right) - k^\ell & = 0, & \ell \in\{1,2,\dots,p\}.
    \end{align}
    Subtracting \eqref{lmm_oc} from \eqref{error-coeff} shows that $C_\ell=\O(\dt^{q})$ for
    $1\le \ell \le p$.
    Substitution into \eqref{error-expansion} gives the result stated in the first part of
    the theorem. For the second result, observe that
    \begin{align}
        C_1(\alpha,\beta,\gamma) & = \sum_{j=0}^{k-1}\left((j+\delta_j)\alpha_j + \beta_j\right) - k - \delta_k \\
                & = C_1^{\text{unif}} + \sum_{j=0}^{k-1}(\delta_j \alpha_j) - \delta_k = \sum_{j=0}^{k-1}(\delta_j \alpha_j) - \delta_k.
    \end{align}
    Since $\gamma_k=1$, we have $\delta_k=\delta_{k-1}$. For Adams
    methods we have $\alpha_{k-1}=1$, while $\alpha_j=0$ for $j\ne k-1$,
    so $C_1(\alpha,\beta,\gamma)=0$.
\end{proof}
Note that in the theorem we have analyzed the accuracy of $\unew$
rather than that of $\ugamma$, but the proof of Lemma~\ref{lem:accuracy-u}
shows that $\ugamma$ will have the same order of accuracy.
According to Lemma~\ref{lem:accuracy-gamma-convex}, the assumption in the
theorem is fulfilled for relaxation methods with $q = p-1$. This suggests that
(for non-Adams methods) the local error in the first step
will be one order worse than the design order of the method. But then
for the next step Lemma~\ref{lem:accuracy-gamma-convex} shows that $\gamma-1$
will also be one order worse. In this manner, the local error can grow by one order
at each step, until very quickly all accuracy is lost and/or a suitable
solution $\gamma > 0$ cannot be found. We see that Adams methods are free from
this problem. A cure for this problem for another classes of LMMs is discussed next.

Consider a linear multistep method \eqref{eq:LMM} with non-negative coefficients $\alpha_i$
and take the special choice $\nu_i = \alpha_i$ for computing the old values in
\eqref{eq:old-values}. This yields
\begin{equation}
  \begin{pmatrix}
    \tnew \\
    \unew
  \end{pmatrix}
  -
  \begin{pmatrix}
    \told \\
    \uold
  \end{pmatrix}
  =
  \dt \sum_{i=0}^k \beta_i
  \begin{pmatrix}
    1 \\
    f^{n-k+i}
  \end{pmatrix}.
\end{equation}
Instead of scaling the time step, this relaxation method can be interpreted
as scaling the right hand side of the augmented ODE \eqref{eq:ode-t-u-eta}
by introducing the pseudotime $\tau$ with constant time steps $\dtau$
and solving
\begin{equation}
\label{eq:ode-t-u-eta-tau}
  \od{}{\tau}
  \begin{pmatrix}
    t(\tau) \\
    u(\tau) \\
    \eta(u(\tau))
  \end{pmatrix}
  =
  \Gamma(\tau)
  \begin{pmatrix}
    1 \\
    f(u(\tau)) \\
    (\eta' f)(u(\tau))
  \end{pmatrix},
  \qquad
  \od{}{\tau} \Gamma(\tau) = \gamma(\tau),
\end{equation}
where $\gamma(\tau)$ is the relaxation parameter. If $\Gamma$ is
continuous and bounded away from zero, this new augmented ODE
\eqref{eq:ode-t-u-eta-tau} results from \eqref{eq:ode-t-u-eta}
by the variable transformation
\begin{equation}
\label{eq:t-tau}
  t(0) = 0,
  \qquad
  \od{t(\tau)}{\tau} = \Gamma(\tau).
\end{equation}
Using this interpretation (and choice of $\nu_i$), relaxation LMMs
with $\alpha_i\ge 0$ can also be used with fixed coefficients.
\begin{theorem}
\label{thm:fixed-fixed-coefficients}
  Consider a fixed coefficient linear multistep method \eqref{eq:LMM} of
  order $p \geq 2$ with non-negative coefficients $\alpha_i \geq 0$
  and the corresponding relaxation method \eqref{eq:relaxation} with
  $\nu_i = \alpha_i$.
  If $\eta$ is a smooth functional of \eqref{eq:ode},
  $\dt$ is sufficiently small,
  and the non-degeneracy condition of
  Theorem~\ref{thm:convex} (if $\eta$ is convex)
  or Theorem~\ref{thm:general} (for general $\eta$) is satisfied,
  then there is a solution $\gamma$ of \eqref{eq:relaxation}
  and the resulting relaxation method is of order $p$.
\end{theorem}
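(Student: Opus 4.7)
The plan is to exploit the pseudotime reinterpretation introduced in the paragraph just before the theorem. The crucial algebraic observation is that with $\nu_i = \alpha_i$, the fixed-coefficient LMM identity \eqref{eq:LMM} gives $\unew - \uold = \dt \sum_{i=0}^{k} \beta_i f^{n-k+i}$, so the relaxed update $\ugamma = \uold + \gamma(\unew - \uold)$ becomes $\ugamma - \uold = \gamma \dt \sum_{i=0}^{k} \beta_i f^{n-k+i}$. This is exactly one step of the unmodified LMM, executed with constant pseudotime step $\dtau = \dt$ and fixed coefficients, applied to the augmented system \eqref{eq:ode-t-u-eta-tau} with the piecewise-constant multiplier $\Gamma \equiv \gamma$ on the current interval.

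First I would formalize this identification step by step: the numerical values $(t^j, u^j, \eta(u^j))$ produced by the fixed-coefficient relaxation LMM coincide with the output of the underlying LMM applied, at constant pseudotime step, to \eqref{eq:ode-t-u-eta-tau}, where $\Gamma$ is the piecewise-constant function taking the value $\gamma$ chosen on each pseudotime subinterval. Because the step size in $\tau$ is constant, the LMM retains its classical order $p$ in this variable, so the local truncation error of each augmented component is $\O(\dtau^{p+1}) = \O(\dt^{p+1})$. In particular, $\unew = u(\tnew) + \O(\dt^{p+1})$ and $\etanew = \eta(\unew) + \O(\dt^{p+1})$, which are exactly the hypotheses feeding into the existence and accuracy arguments for $\gamma$ in the earlier sections.

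Consequently, the scalar equation $r(\gamma) = 0$ from \eqref{eq:r-gamma} has the same structure as in Lemma~\ref{lem:accuracy-gamma-convex} (convex case) and in Theorem~\ref{thm:general} (general case), and the respective Taylor-based sign analysis or implicit-function argument applies verbatim under the stated non-degeneracy hypothesis, yielding a unique $\gamma = 1 + \O(\dt^{p-1})$. Lemma~\ref{lem:accuracy-u} then gives $\ugamma = u(\tgamma) + \O(\dt^{p+1})$. Since $\Gamma = 1 + \O(\dt^{p-1})$ is bounded away from zero, the change of variables \eqref{eq:t-tau} between $\tau$ and $t$ is a smooth bijection with derivative uniformly bounded (and bounded away from zero) on a finite time interval, so order-$p$ accuracy in pseudotime transfers to order-$p$ accuracy in physical time. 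The main obstacle I expect is making sure the step-to-step coupling does not degrade the local $\O(\dt^{p-1})$ estimate for $\gamma - 1$, that is, that the reinterpretation really prevents the order-inflation loop flagged in the earlier discussion of variable-coefficient LMMs; but this is precisely the feature for which the pseudotime viewpoint was designed, since every step is a genuine constant-step LMM step in $\tau$ with the coefficients it was built for.
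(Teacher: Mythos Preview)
Your proposal is correct and follows essentially the same approach as the paper: invoke the earlier existence/accuracy results (Theorem~\ref{thm:convex} or Theorem~\ref{thm:general}) to obtain $\gamma = 1 + \O(\dtau^{p-1})$, and then use the pseudotime reinterpretation \eqref{eq:ode-t-u-eta-tau}---enabled by the choice $\nu_i = \alpha_i$ and the identity $\unew - \uold = \dt \sum_i \beta_i f^{n-k+i}$---to conclude that the constant-step LMM is order~$p$ in $\tau$ and hence in~$t$. Your write-up is simply more explicit about each step than the paper's two-sentence proof.
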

\begin{proof}
  Since the fixed step size method is of order $p \geq 2$,
  Theorem~\ref{thm:convex} (if $\eta$ is convex)
  or Theorem~\ref{thm:general} (for general $\eta$)
  can be applied. Hence, there is a solution
  $\gamma(\tau) = 1 + \O( \dtau^{p-1} )$ of
  \eqref{eq:relaxation}.

  Because of the special choice of $\nu_i = \alpha_i$,
  the sequence of the relaxation solutions is a $p$th order approximation
  to the solution of \eqref{eq:ode-t-u-eta-tau}. Hence, it is also a $p$th
  order approximation of the solution of \eqref{eq:ode}.
\end{proof}

\begin{remark}
\label{rem:fixed-fixed-coefficients}
  Because of the scaling by $\Gamma(\tau)$ in \eqref{eq:ode-t-u-eta-tau},
  the relaxation parameter $\gamma$ may be further than expected from unity.
  Indeed, $\gamma(\tau) = 1 + \O( \dtau^{p-1} )$ yields
  $\Gamma(\tau^\mathrm{final}) = 1 + \O( \dtau^{p-2} )$,
  since $\O( \dtau^{-1} )$ time steps have to be used.
  Hence, the observed alteration of the physical time $t$ is
  $\max_\tau |\Gamma(\tau) \gamma(\tau) - 1| = \O( \dtau^{p-2} )$.
\end{remark}

\begin{remark}
\label{rem:adapted-vs-fixed-coefficients}
  A relaxation LMM \eqref{eq:relaxation} with adapted coefficients
  is typically more accurate than the same relaxation LMM with fixed coefficients,
  in particular for second-order methods.
  Indeed, the factor $\Gamma(\tau)$ in the underlying ODE
  \eqref{eq:ode-t-u-eta-tau} grows as
  $\Gamma(\tau) = 1 + \O( \dtau^{p-2} )$. Hence, it does not
  decrease in size if $\dtau$ is reduced for $p = 2$. Since an increase
  of $\Gamma(\tau)$ results in an increase of the Lipschitz constant of
  the underlying ODE \eqref{eq:ode-t-u-eta-tau}, it is reasonable to expect
  bigger errors compared to a relaxation method with adapted coefficients
  solving \eqref{eq:ode-t-u-eta}.
\end{remark}

\section{Stability and Accuracy Properties of Relaxation Methods}
\label{sec:stability-properties}

Since the relaxation parameter $\gamma = 1 + \O( \dt^{p-1} )$
introduces only a small variation of the baseline time integration scheme,
basic stability properties are often not lost, similarly to the case of
relaxation Runge--Kutta methods \cite[Section~3]{ketcheson2019relaxation}.
Furthermore, applying relaxation can increase the accuracy of baseline
schemes.

The incremental direction technique (IDT) approach is basically the
relaxation approach without adapting the new time to \eqref{eq:t-n-gamma}.
For Runge--Kutta methods, this results in a slight loss of the order of
accuracy \cite{calvo2006preservation,ketcheson2019relaxation,ranocha2020relaxation},
i.e.\ the IDT method is of order $p-1$.
For some LMMs and conservative/dissipative problems, IDT versions result
in $\gamma = 1 + \O( \dt^{p-2} )$ and an order of accuracy $p-2$.
However, there are also dissipative problems where IDT versions fail because
no solution for $\gamma$ can be found.

\subsection{Zero-Stability of Relaxation Linear Multistep Methods}

In general, proving even zero-stability of LMMs with variable step sizes is not
easy \cite{soderlind2018zero}.
Typically, stability of variable step size LMMs is implied if
the corresponding fixed step size method has certain stability
properties and the step sizes do not vary too much, cf.\ e.g.\
\cite[Theorems~III.5.5 and III.5.7]{hairer2008solving} or
\cite{arevalo2017grid}.
For relaxation LMMs, the step sizes are chosen via the relaxation
coefficient $\gamma = 1 + \O( \dt^{p-1} )$. Hence,
the step sizes vary as
\begin{equation}
  \frac{\dt_{n}}{\dt_{n-1}}
  =
  \frac{\gamma_{n} \dt}{\gamma_{n-1} \dt}
  =
  1 + \O( \dt^{p-1} ).
\end{equation}
Thus, under the usual conditions, relaxation LMMs are
stable if the time step is small enough.

\subsection{Strong Stability Preserving Methods}

SSP methods with SSP coefficient $\ssp > 0$ guarantee a given convex stability
property under a time step restriction $\dt \leq \ssp \dtE$
whenever the explicit Euler method satisfies the same convex stability
property under the time step restriction $\dt \leq \dtE$; cf.\
\cite{gottlieb2011strong} and the references cited therein.

If the relaxation parameter $\gamma \in [0, 1]$, then $\ugamma$ is a convex
combination of $\unew$ and $\uold$. Hence, all convex
stability properties satisfied by these two values are retained.
However, if $\gamma > 1$, $\ugamma$ is not a convex combination
and the SSP property with the same SSP coefficient~$\ssp$ can be lost, cf.\
\cite[Theorem~3.3 and Table~1]{ketcheson2019relaxation}, where also more
detailed investigations of the SSP property of relaxation Runge--Kutta
methods can be found. Specifically, therein it was proved that the
SSP coefficient of a relaxation RK method is not smaller than that of the
original RK method if
\begin{equation}
  0 \le \gamma \le \frac{-1}{R(-\ssp) - 1} \geq 1,
\end{equation}
where $R$ is the stability function of the explicit SSP Runge--Kutta
method \eqref{eq:RK-step}. We also have the following
\begin{theorem}
  Consider an explicit SSP Runge--Kutta method \eqref{eq:RK-step}
  with SSP coefficient $\ssp$.
  The corresponding relaxation RK method \eqref{eq:relaxation} with $m=1$
  has SSP coefficient $\ssp_\gamma = \ssp + \O( \dt^{p-1} )$.
\end{theorem}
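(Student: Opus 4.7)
My plan is to exploit the identity
\begin{equation*}
  \ugamma = u^{n-1} + \gamma \dt \sum_{i=1}^s b_i f(y^i),
\end{equation*}
which shows that, for $m = 1$, the relaxation step is precisely the baseline Runge--Kutta update with each quadrature weight $b_i$ replaced by $\gamma b_i$. By Theorem~\ref{thm:convex}, $\gamma = 1 + \O(\dt^{p-1})$, so this perturbation of the weights is of size $\O(\dt^{p-1})$, and the stages $y^i$ themselves are untouched.

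I would first treat the range $\gamma \in [0, 1]$, where $\ugamma = (1-\gamma) u^{n-1} + \gamma \unew$ is a genuine convex combination. Since both $u^{n-1}$ and $\unew$ satisfy the assumed convex stability property under the step restriction $\dt \le \ssp \dtE$ (the former trivially and the latter by the SSP property of the baseline scheme), so does any convex combination of them, giving $\ssp_\gamma \ge \ssp$ in this case with no asymptotic loss.

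For the remaining range $\gamma > 1$, the key tool is the quantitative estimate recalled in the paragraph preceding the theorem \cite[Theorem~3.3]{ketcheson2019relaxation}: the SSP coefficient of the relaxation method is at least $\ssp$ whenever $\gamma \le -1/(R(-\ssp) - 1)$, where $R$ is the stability function of the baseline scheme. Whenever this threshold strictly exceeds unity (the generic situation for explicit SSP methods), the fact that $\gamma - 1 = \O(\dt^{p-1}) \to 0$ places $\gamma$ safely inside the admissible range for sufficiently small $\dt$, again yielding $\ssp_\gamma \ge \ssp$. For the reverse direction $\ssp_\gamma \le \ssp + \O(\dt^{p-1})$, I would start from a Shu--Osher representation of the baseline scheme realizing the optimal SSP coefficient as $\min_{i,j} \lambda_{ij}/\mu_{ij}$ and substitute the perturbed weights $\gamma b_i$ in the final update. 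The resulting Shu--Osher ratios are smooth functions of $\gamma$ near $\gamma = 1$, with value $\ssp$ at $\gamma = 1$, so a first-order Taylor expansion in $(\gamma - 1)$ yields the claimed $\O(\dt^{p-1})$ deviation.

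The main obstacle I anticipate is making the dependence of the optimal SSP coefficient on the perturbed weights quantitatively rigorous. This amounts to Lipschitz continuity of the optimal value of the linear program that defines the SSP coefficient in terms of the Shu--Osher parameters, which is standard but must be stated carefully so that the $\O(\dt^{p-1})$ bound on the weight perturbation propagates cleanly to the SSP coefficient.
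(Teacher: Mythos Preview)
Your approach works in spirit but is considerably more elaborate than the paper's, and one of your intermediate steps is both unnecessary and not quite right.

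The paper's argument is much shorter. It observes that the relaxation method has stability function $R_\gamma(z) = 1 + \gamma\bigl(R(z)-1\bigr)$ and then invokes the characterization from \cite[Lemma~3.2 and Theorem~3.3]{ketcheson2019relaxation} that ties the SSP coefficient directly to the positivity of $R_\gamma$ on the negative real axis. Since $\gamma = 1 + \O(\dt^{p-1})$, one has $R_\gamma(-\ssp) = R(-\ssp) + \O(\dt^{p-1})$, and the perturbation of the relevant root of $R_\gamma$ (hence of $\ssp_\gamma$) is therefore $\O(\dt^{p-1})$. No case split, no Shu--Osher tableau, no LP sensitivity.

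Your route via Lipschitz continuity of the Shu--Osher LP optimum in the weights $b_i \mapsto \gamma b_i$ is valid and, once established, already delivers the two-sided bound $\ssp_\gamma = \ssp + \O(\dt^{p-1})$ on its own. That makes your separate lower-bound discussion redundant; and that discussion also has a gap. The threshold $-1/(R(-\ssp)-1)$ equals exactly $1$ whenever $R(-\ssp)=0$, which is not an exotic case (it occurs e.g.\ for forward Euler and, more generally, whenever the stability-function constraint is active at the SSP coefficient). In that situation your $\gamma>1$ argument gives nothing, so you cannot conclude $\ssp_\gamma \ge \ssp$---only $\ssp_\gamma \ge \ssp - \O(\dt^{p-1})$, which is precisely what the LP/stability-function perturbation already yields. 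I would drop the case split entirely and either run the LP-sensitivity argument cleanly for both directions, or---simpler---follow the paper and work directly with $R_\gamma$.
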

\begin{proof}
  The stability function $R$ of an $s$-stage Runge--Kutta method with
  Butcher coefficients $A, b$ is
  \begin{equation}
    R(z) = 1 + z b^T (\I - z A)^{-1} \1,
  \end{equation}
  where $\1 = (1, \dots, 1)^T \in \R^s$. The stability function of the
  relaxation method is given by $R_\gamma(z) = 1 + \gamma (R(z) - 1)$.
  The relaxation method is SSP with SSP coefficient $\geq \ssp_\gamma$
  if $R_\gamma(-\ssp_\gamma) \geq 0$,
  cf.\ \cite[Lemma~3.2 and Theorem~3.3]{ketcheson2019relaxation}.
  Because $\gamma = 1 + \O( \dt^{p-1} )$, this implies
  $R_\gamma(-\ssp) = R(-\ssp) + \O( \dt^{p-1} )$.
\end{proof}

Let us turn now to linear multistep methods.
To maintain the SSP property when the step size is not fixed,
we require that any coefficients of the fixed step size method
that are equal to zero remain exactly zero when the step size is varied:
\begin{subequations} \label{zero-assumption}
\begin{align}
    \alpha_i & = 0 \implies \alpha^n_i = 0, \\
    \beta_i & = 0 \implies \beta^n_i = 0.
\end{align}
\end{subequations}
This assumption is satisfied by the methods
of \cite{hadjimichael2016strong,mohammadi2019polynomial}:

\begin{theorem}
  Consider an explicit SSP LMM \eqref{eq:LMM} with SSP coefficient
  $\ssp$ and satisfying \eqref{zero-assumption}.
  If $m$ and $\nu_i$ are chosen such that $\nu_{i-m} > 0 \implies \alpha_{i-k} > 0$,
  then for small enough $\dt$
  the relaxation LMM \eqref{eq:relaxation} is SSP with an SSP coefficient
  $\ssp_\gamma = \ssp + \O( \dt^{p-1} )$ and
  $\ssp_\gamma \geq \ssp$ if $\gamma \in [0,1]$.
  Furthermore, if $m=k$ and $\nu_i = \alpha^n_i$ then $\ssp_\gamma = \ssp/\gamma$.
\end{theorem}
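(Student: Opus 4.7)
The plan is to bring the relaxation update into an equivalent Shu--Osher form so that the classical SSP characterization for linear multistep methods can be applied directly. Substituting \eqref{eq:old-values} and \eqref{eq:LMM} into $\ugamma = (1-\gamma)\uold + \gamma\unew$ yields an LMM-shaped update
\begin{equation}
  \ugamma = \sum_{j=0}^{k-1} \hat\alpha_j\, u^{n-k+j} + \dt \sum_{j=0}^{k} \hat\beta_j\, f^{n-k+j},
\end{equation}
with $\hat\alpha_j := (1-\gamma)\tilde\nu_j + \gamma\alpha^n_j$ and $\hat\beta_j := \gamma\beta^n_j$, where $\tilde\nu_j$ is the sequence $(\nu_i)$ re-indexed to the $u^{n-k+j}$ layout (zero-padded as needed). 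Consistency of the LMM combined with $\sum_i \nu_i = 1$ gives $\sum_j \hat\alpha_j = 1$, so the composite scheme is still consistent.

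Next I would verify the non-negativity required by the Shu--Osher/SSP bound. The assumption \eqref{zero-assumption} together with continuous dependence of the variable-step coefficients on the step-size ratios means that $\beta^n_j$ inherits the sign of $\beta_j$ for small $\dt$, so $\hat\beta_j \geq 0$ whenever $\gamma \geq 0$. The index-matching hypothesis on $\nu$ ensures that wherever $\tilde\nu_j > 0$ the fixed-step coefficient $\alpha_j$ is strictly positive; hence $\alpha^n_j$ is bounded below by a positive constant for $\dt$ small, and the perturbation $(1-\gamma)\tilde\nu_j$ cannot drive $\hat\alpha_j$ below zero because $\gamma = 1 + \O(\dt^{p-1})$ by Theorems~\ref{thm:convex} and~\ref{thm:general}.

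With non-negativity and consistency in place, the classical SSP characterization \cite[Chapter~8]{gottlieb2011strong} identifies
\begin{equation}
  \ssp_\gamma = \min_{j\,:\,\hat\beta_j > 0} \frac{\hat\alpha_j}{\hat\beta_j}
  = \min_{j\,:\,\beta_j > 0} \frac{(1-\gamma)\tilde\nu_j + \gamma\alpha^n_j}{\gamma\beta^n_j}.
\end{equation}
For the first claim I would expand using $\gamma - 1 = \O(\dt^{p-1})$ and $\alpha^n_j - \alpha_j, \beta^n_j - \beta_j = \O(\dt^{p-1})$ (the latter following from the smooth dependence of the LMM coefficients on step-size ratios that differ from unity by $\O(\dt^{p-1})$) to obtain $\hat\alpha_j/\hat\beta_j = \alpha_j/\beta_j + \O(\dt^{p-1})$ uniformly in $j$, hence $\ssp_\gamma = \ssp + \O(\dt^{p-1})$. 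For $\gamma \in [0,1]$ the decomposition $\hat\alpha_j/\hat\beta_j = \alpha^n_j/\beta^n_j + (1-\gamma)\tilde\nu_j/(\gamma\beta^n_j)$ expresses the ratio as a sum of the fixed-step ratio and a non-negative term, yielding $\ssp_\gamma \geq \ssp$. Finally, $m=k$ and $\nu_i = \alpha^n_i$ give $\tilde\nu_j = \alpha^n_j$ and therefore $\hat\alpha_j = \alpha^n_j$ identically, so $\hat\alpha_j/\hat\beta_j = \alpha^n_j/(\gamma\beta^n_j)$ and $\ssp_\gamma = \ssp/\gamma$ drops out by taking the minimum.

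The main obstacle I expect is the careful handling of the variable-step coefficients $\alpha^n_j, \beta^n_j$: the classical SSP criterion is typically phrased for fixed-step coefficients, so one must check that \eqref{zero-assumption} together with the smoothness of $\alpha^n_j, \beta^n_j$ in the step-size ratios is enough both to transfer non-negativity and to control the $\O(\dt^{p-1})$ deviation from the fixed-step ratios. The case $\gamma > 1$ is the most delicate, because $\hat\alpha_j$ could a priori become negative at indices where $\tilde\nu_j > \alpha^n_j$; this is exactly the step at which the index-matching hypothesis $\nu_{i-m} > 0 \Rightarrow \alpha_{i-k} > 0$ becomes indispensable.
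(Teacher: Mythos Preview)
Your approach is essentially the same as the paper's: both rewrite the relaxation update as an LMM with modified coefficients $\hat\alpha_j=(1-\gamma)\tilde\nu_j+\gamma\alpha_j$, $\hat\beta_j=\gamma\beta_j$, then invoke the standard SSP characterization for LMMs (the paper phrases it as $\ssp=\max\{r\ge0:\alpha_i-r\beta_i\ge0\text{ for all }i\}$, equivalent to your $\min$-ratio form), and finally use $\gamma-1=\O(\dt^{p-1})$ for the perturbation estimate and the specialization $\nu_i=\alpha^n_i$ for the $\ssp/\gamma$ identity.

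Your write-up is in fact more careful than the paper's in two respects. First, the paper's proof silently works with the fixed-step coefficients $\alpha_i,\beta_i$ inside $\alpha_{i,\gamma}$, whereas you explicitly track the variable-step coefficients $\alpha^n_j,\beta^n_j$ and argue that they deviate from the fixed-step ones by $\O(\dt^{p-1})$ via smooth dependence on the step-size ratios; this is the honest bookkeeping needed to justify the conclusion. Second, the paper's proof does not spell out the claim $\ssp_\gamma\ge\ssp$ for $\gamma\in[0,1]$ at all, while your decomposition $\hat\alpha_j/\hat\beta_j=\alpha^n_j/\beta^n_j+(1-\gamma)\tilde\nu_j/(\gamma\beta^n_j)$ gives it directly. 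The obstacle you flag about transferring non-negativity through \eqref{zero-assumption} and controlling the $\gamma>1$ case via the index-matching hypothesis is exactly the point of those assumptions, and you have identified it correctly.
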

\begin{proof}
  Here, we use the definition of the SSP coefficient given as
  $\ssp_n$ in \cite{hadjimichael2016strong}, which can be written as
  \begin{equation}
    \ssp =
    \begin{cases}
      \max\set{ r \in [0, \infty) | \forall i\colon \alpha_i - r \beta_i \geq 0 },
      &\text{if } \forall i\colon \alpha_i, \beta_i \geq 0,
      \\
      0,
      &\text{otherwise}.
    \end{cases}
  \end{equation}
  The relaxation LMM can be interpreted as an LMM with parameters
  \begin{equation}
    \alpha_{i,\gamma}
    =
    \nu_i (1 - \gamma) + \gamma \alpha_i,
    \quad\text{and}\quad
    \beta_{i,\gamma} = \gamma \beta_i.
  \end{equation}
  Using $\gamma = 1 + \O( \dt^{p-1} )$,
  $\alpha_{i,\gamma} = \alpha_i + \O( \dt^{p-1} )$ and
  $\beta_{i,\gamma} = \beta_i + \O( \dt^{p-1} )$.
  For small enough $\dt$ these coefficients are also non-negative.
  Hence, $\ssp_\gamma = \ssp + \O( \dt^{p-1} )$.

  Meanwhile, if $m=k$ and $\nu_i = \alpha_i$ then the relaxation LMM can be written
  as a standard LMM but with coefficients $(\alpha,\gamma\beta)$.
  Hence the SSP coefficient is $\ssp/\gamma$.
\end{proof}

Choosing a variable step size method that does not satisfy the implication
$\nu_{i-m} > 0 \implies \alpha_{i-k} > 0$ can lead to loss of the SSP property.
For instance, the second-order three-step method with variable step size
of \cite{hadjimichael2016strong} is given by
\begin{equation}
  \unew
  =
  \frac{\Omega_2^2 - 1}{\Omega_2^2} \left(
    u^{n-1} + \frac{\Omega_2}{\Omega_2 - 1} \dt f(u^{n-1})
  \right)
  + \frac{1}{\Omega_2^2} u^{n-3},
\end{equation}
where $\Omega_2 = (t^{n-1} - t^{n-3}) / \dt$.
Taking e.g. $m=2$ and $\nu_0\ne 0$ generates a term $(1-\gamma)u^{n-2}$
in the relaxation solution, which destroys the SSP property when $\gamma>1$.

In general, orthogonal projection methods can violate SSP properties.
Indeed, linear functionals are convex and linear invariants are preserved
by the explicit Euler method (as well as by all Runge--Kutta, linear
multistep, and general linear methods).
Since orthogonal projection methods do not conserve linear invariants
in general, the corresponding convex stability property is lost.
\begin{example}
  Consider the two-step second-order SSP Runge--Kutta method
  SSPRK(2,2) given by
  \begin{equation}
  \begin{aligned}
    y^{1} &= u^{n-1}, \\
    y^{2} &= u^{n-1} + \dt f(y^{1}), \\
    \unew &= u^{n-1} + \frac{1}{2} \dt \left( f(y^{1}) + f(y^{2}) \right).
  \end{aligned}
  \end{equation}
  Solutions $u$ of the ODE
  \begin{equation}
    \od{}{t} u(t)
    =
    \begin{pmatrix}
      0 & -1 & 1 \\
      1 & 0 & -1 \\
      -1 & 1 & 0
    \end{pmatrix}
    u(t),
    \quad
    u(0) = \begin{pmatrix} -1 \\ 0 \\ 0 \end{pmatrix},
  \end{equation}
  have a constant energy $\eta(u) = \frac{1}{2} \| u \|^2$ and total
  mass $\mathcal{M}(u) = \sum_i u_i$. The first step of the orthogonal
  projection SSPRK(2,2) method results in the total mass
  \begin{equation}
    \mathcal{M}( u^{1}_\lambda )
    =
    - \frac{ \sqrt{2} }{ \sqrt{2 + 3 \dt^4} }
    >
    \mathcal{M}(u^{0}).
  \end{equation}
  Hence, the convex stability property related to the total mass is violated.
  In contrast, the relaxation SSPRK(2,2) method preserves the total mass.
\end{example}

\section{Numerical Results}
\label{sec:numerical-results}

Here, the following classes of linear multistep methods \eqref{eq:LMM}
are considered. If not stated otherwise, the estimate $\etanew$ is
obtained using a dense output formula and Gauß quadrature using one ($k=2$)
or two ($k \in \{3,4\}$) nodes.
\begin{itemize}
  \item Adams($k$)\\
  The $k$-step explicit Adams methods (also known as Adams--Bashforth methods)
  are based on the formula
  $u^{n} = u^{n-1} + \int_{t^{n-1}}^{t^{n}} \mathcal{P}_f$,
  where $\mathcal{P}_f$ is the polynomial interpolating
  $f(u^{n-1})$, \dots, $f(u^{n-k})$,
  see \cite{bashforth1883attempt} and \cite[Section~III.1]{hairer2008solving}.
  These methods can be used with variable step sizes.
  A natural dense output at an intermediate value $\tau_i$ is generated by
  evaluating the integral with upper limit $\tau_i$ instead of $t^{n}$.

  \item Nyström($k$)AS\\
  The $k$-step Nyström methods are based on the formula
  $u^{n} = u^{n-2} + \int_{t^{n-2}}^{t^{n}} \mathcal{P}_f$,
  where $\mathcal{P}_f$ is again the polynomial interpolating $f(u^{n-1})$, \dots,
  $f(u^{n-k})$,
  see \cite{nystrom1925numerische} and \cite[Section~III.1]{hairer2008solving}.
  Based on these constant step size Nyström methods, an extension to variable
  step sizes that is equipped with a dense output formula has been proposed
  by Arévalo and Söderlind \cite{arevalo2017grid} and will be denoted as
  Nyström($k$)AS.

  \item eBDF($k$), eBDF($k$)AS\\
  The family of extrapolated backward difference formula (eBDF) methods
  is based on the formula
  $\mathcal{P}_u'(t^{n}) = \mathcal{P}_f(t^{n})$,
  where $\mathcal{P}_u$ and $\mathcal{P}_f$ are polynomials that
  interpolate the previous step
  values $u^{n-i}$ and step derivatives $f(u^{n-i})$, respectively,
  cf.\ \cite{ruuth2005high}.
  Based on the constant step size eBDF methods, an extension to variable
  step sizes that is equipped with a dense output formula has been proposed
  by Arévalo and Söderlind \cite{arevalo2017grid} and will be denoted as
  eBDF($k$)AS.

  \item SSP($k, p$), SSP($k, p$)AS\\
  Second and third order accurate variable step size SSP LMMs have been
  proposed in \cite{hadjimichael2016strong}. The estimate of the evolution
  of $\eta$ can either be based on the evolution predicted by the SSP
  method itself \eqref{eq:eta-est-1-SSPLMM} or on the quadrature
  \eqref{eq:eta-est-1-quadrature} using the dense output formula of
  \cite{mohammadi2019polynomial}, which is based on the framework of
  \cite{arevalo2017grid}. If the latter option is chosen, the method is
  denoted as SSP($k, p$)AS.

  \item EDC($i$,$j$)\\
  The explicit difference correction (EDC) methods of \cite{arevalo2000regular}
  are extended to variable step sizes and equipped with a dense output
  using the approach of \cite{arevalo2017grid}.

  \item BDF($k$)\\
  The family of backward difference formula (BDF) methods
  is based on the formula $p_u'(t^{n}) = f(\mathcal{P}_u(t^{n}))$,
  where $\mathcal{P}_u$ is a polynomial that interpolates the step
  values $u^{n}, u^{n-1}$, \dots, $u^{n-k}$, cf.\ \cite{curtiss1952integration}
  and \cite[Section~III.1]{hairer2008solving}.
\end{itemize}
If not stated otherwise, relaxation LMMs have been adapted to the new
step sizes using $m = 1$ and $\nu_i = \delta_{i,0}$ in \eqref{eq:old-values}.
We have checked that the results using different choices of $\nu_i$
are similar.
Since Theorem~\ref{thm:fixed-fixed-coefficients} can be applied to
Adams methods, Nyström methods, and SSP LMMs, we have also tested the
corresponding fixed coefficient version of these schemes.

\begin{remark}
  The modified leapfrog method of \cite{sanz1982explicit} is the relaxation
  Nyström($2$)AS method for conservative inner-product norms with $m = 2$
  and $\nu_i = \delta_{i,0}$ in \eqref{eq:old-values} and fixed step sizes,
  cf.\ Theorem~\ref{thm:fixed-fixed-coefficients}.
\end{remark}

We have implemented the relaxation methods used in this article
in Python, using SciPy \cite{virtanen2019scipy} to solve the scalar
non-quadratic equations for the relaxation parameter $\gamma$.
Matplotlib \cite{hunter2007matplotlib} has been used to generate the plots.
The source code for all numerical examples is available online
\cite{ranocha2020generalRepro}.

\subsection{Nonlinear Oscillator}
\label{sec:nonlinear-osc}

For the nonlinear oscillator
\begin{equation}
\label{eq:nonlinear-osc}
  \od{}{t} \begin{pmatrix} u_1(t) \\ u_2(t) \end{pmatrix}
  =
  \| u(t) \|^{-2} \begin{pmatrix} -u_2(t) \\ u_1(t) \end{pmatrix},
  \quad
  u^{0}
  =
  \begin{pmatrix} 1 \\ 0 \end{pmatrix},
\end{equation}
of \cite{ranocha2020strong,ranocha2020energy}, the energy
$\eta(u) = \frac{1}{2} \| u \|^2$ is conserved.
We choose this test problem to demonstrate the convergence properties of
relaxation methods in a simple setting where the relaxation parameter can
also be computed explicitly by solving a quadratic equation. We use some common
explicit methods here because this problem is not stiff.

Results of a convergence study for this problem are visualized in
Figure~\ref{fig:nonlinear-osc-convergence}.
The Nyström($k$)AS, $k \in \{ 3, 4\}$, methods result in a large error and
are not completely in the asymptotic regime, which could be attributed to
their lack of a reasonable stability region \cite[Section~V.1]{hairer2010solving}.
However, applying projection or relaxation results in the expected order
of accuracy.
The Adams methods do not have similar problems and work well. All other
explicit methods described above behave similarly to the Adams methods.
For this test problem and formally odd-order relaxation and projection
methods, there is a certain superconvergence phenomenon, increasing the
experimental order of accuracy by one in accordance with
\arXiv{%
the analysis given in the extended version of this article available
on arXiv.org \cite{ranocha2020general}.}{%
Theorem~\ref{thm:nonlinear-Euclidean-Hamiltonian}.
}

\begin{figure}[ht]
\centering
  \begin{subfigure}{\textwidth}
  \centering
    \includegraphics[width=\textwidth]{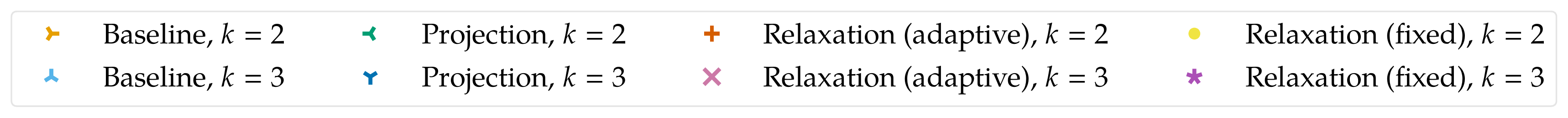}
  \end{subfigure}%
  \\
  \begin{subfigure}{0.49\textwidth}
  \centering
    \includegraphics[width=\textwidth]{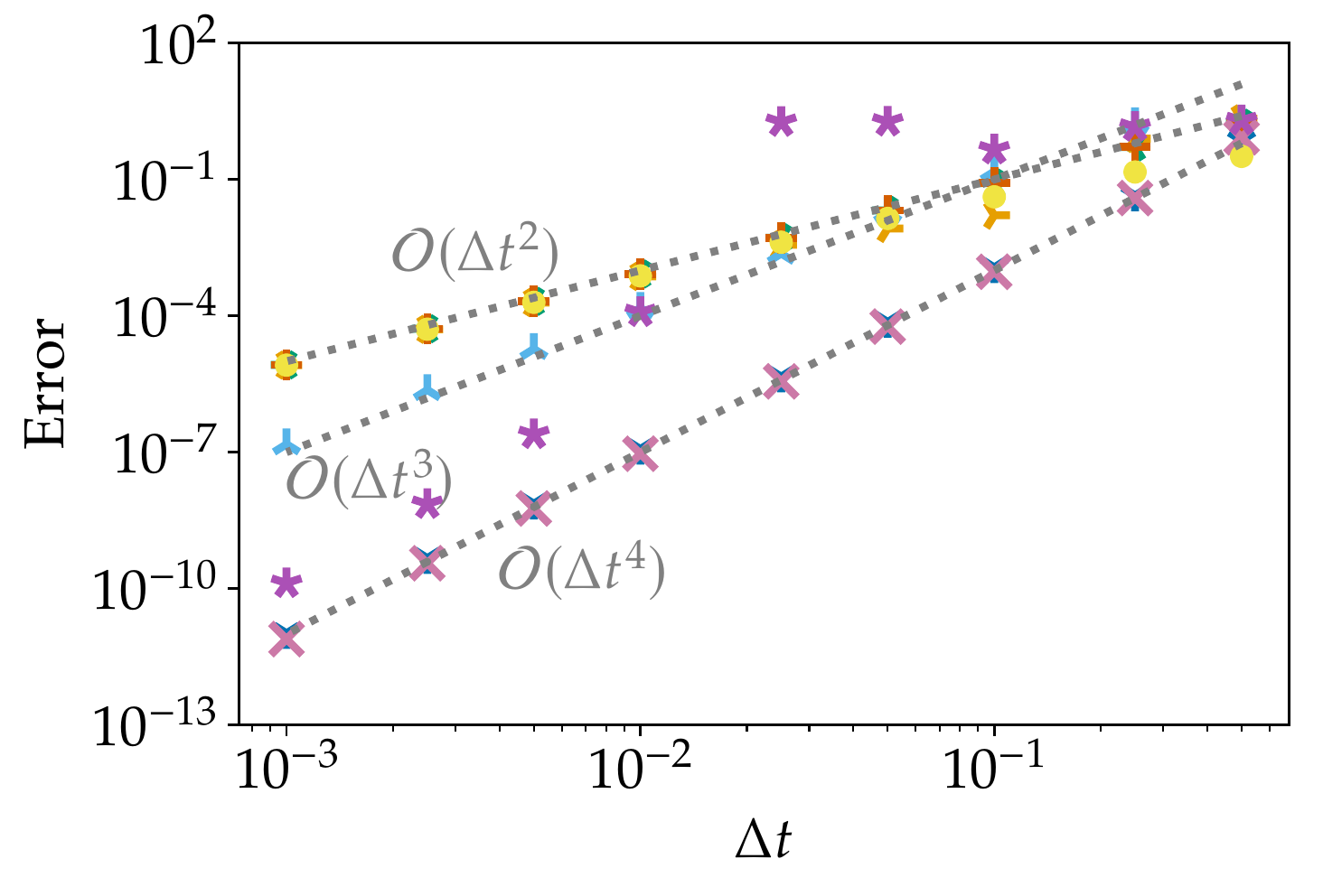}
    \caption{Adams($k$) methods.}
  \end{subfigure}%
  \hspace*{\fill}
  \begin{subfigure}{0.49\textwidth}
  \centering
    \includegraphics[width=\textwidth]{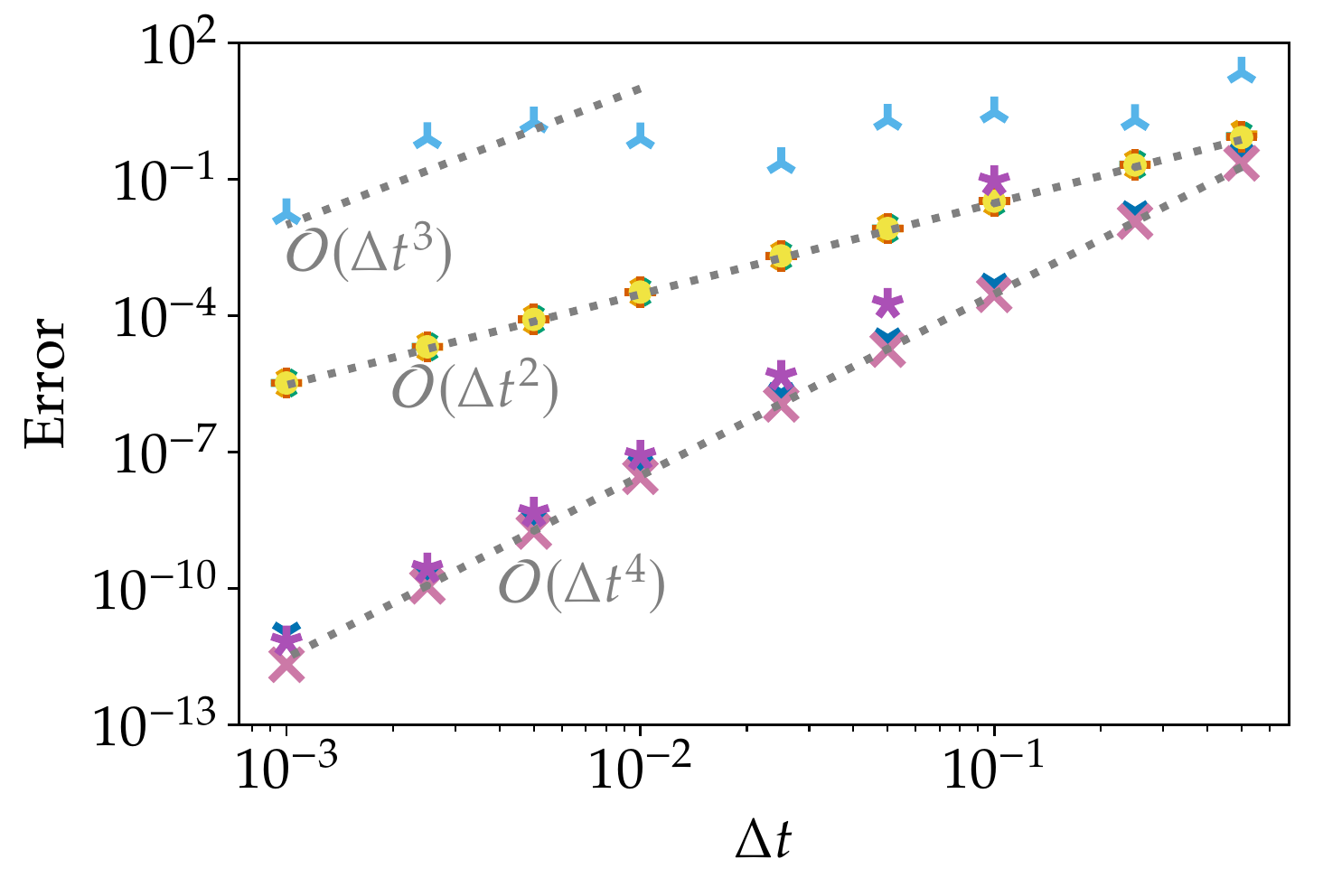}
    \caption{Nyström($k$)AS methods.}
  \end{subfigure}%
  \caption{Convergence study for linear multistep methods applied to the
           nonlinear oscillator \eqref{eq:nonlinear-osc} with final time
           $t = 20$.}
  \label{fig:nonlinear-osc-convergence}
\end{figure}

The fixed coefficient versions of Adams($3$) and Nyström($3$)AS result in
larger errors than the corresponding versions with adapted coefficients.
For smaller time steps $\dt$, they are even not in the asymptotic regime.
This behavior is in accordance with the analysis of
Section~\ref{sec:fixed-fixed-coefficients}.
The energy evolution of a representative example from this section is shown
in Figure~\ref{fig:nonlinear-osc-energy}.

\subsection{Kepler Problem}
\label{sec:kepler}

The Kepler problem
\begin{equation}
\label{eq:kepler}
\begin{gathered}
  \od{}{t} q(t)
  = \od{}{t} \begin{pmatrix} q_1(t) \\ q_2(t) \end{pmatrix}
  = p(t),
  \quad
  \od{}{t} p_i(t) = - \frac{q_i(t)}{ \abs{q(t)}^3 },
  \\
  q(0) = \begin{pmatrix} 1-e \\ 0 \end{pmatrix},
  \quad
  p(0) = \begin{pmatrix} 0 \\ \sqrt{ (1-e) / (1+e) } \end{pmatrix},
\end{gathered}
\end{equation}
with eccentricity $e = 0.5$ is a Hamiltonian system
\begin{equation}
\label{eq:Hamiltonian}
  \od{}{t} q(t) = \partial_p H\bigl( q(t), p(t) \bigr),
  \quad
  \od{}{t} p(t) = - \partial_q H\bigl( q(t), p(t) \bigr),
\end{equation}
with Hamiltonian
\begin{equation}
  H(q,p) = \frac{1}{2} \abs{p}^2 - \frac{1}{\abs{q}},
\end{equation}
where the angular momentum
\begin{equation}
  L(q,p) = q_1 p_2 - q_2 p_1
\end{equation}
is an additional conserved functional, cf.\
\cite[Section~1.2.4]{sanzserna1994numerical}.
We choose this test problem to demonstrate the convergence properties of
relaxation methods in a more complex setting where the relaxation parameter
is computed using a scalar root finding method. Since this problem is
not stiff, we use explicit methods. We choose LMMs different from the ones
used in Section~\ref{sec:nonlinear-osc} because we want to demonstrate the
applicability of relaxation methods for a variety of schemes.

The baseline, projection, and relaxation variants of the explicit multistep
methods described above converge with the expected order of accuracy for
this problem if the energy or angular momentum is conserved by the
projection/relaxation method.
As examples, third- and fourth-order accurate eBDF methods yield the
convergence results shown in Figure~\ref{fig:kepler-convergence}. Clearly,
both the projection and the relaxation methods reduce the error compared to
the baseline schemes. The results for the other explicit methods described
above are similar.
The energy evolution of a representative example from this section is shown
in Figure~\ref{fig:kepler-energy}.

\begin{figure}[ht]
\centering
  \includegraphics[width=0.7\textwidth]{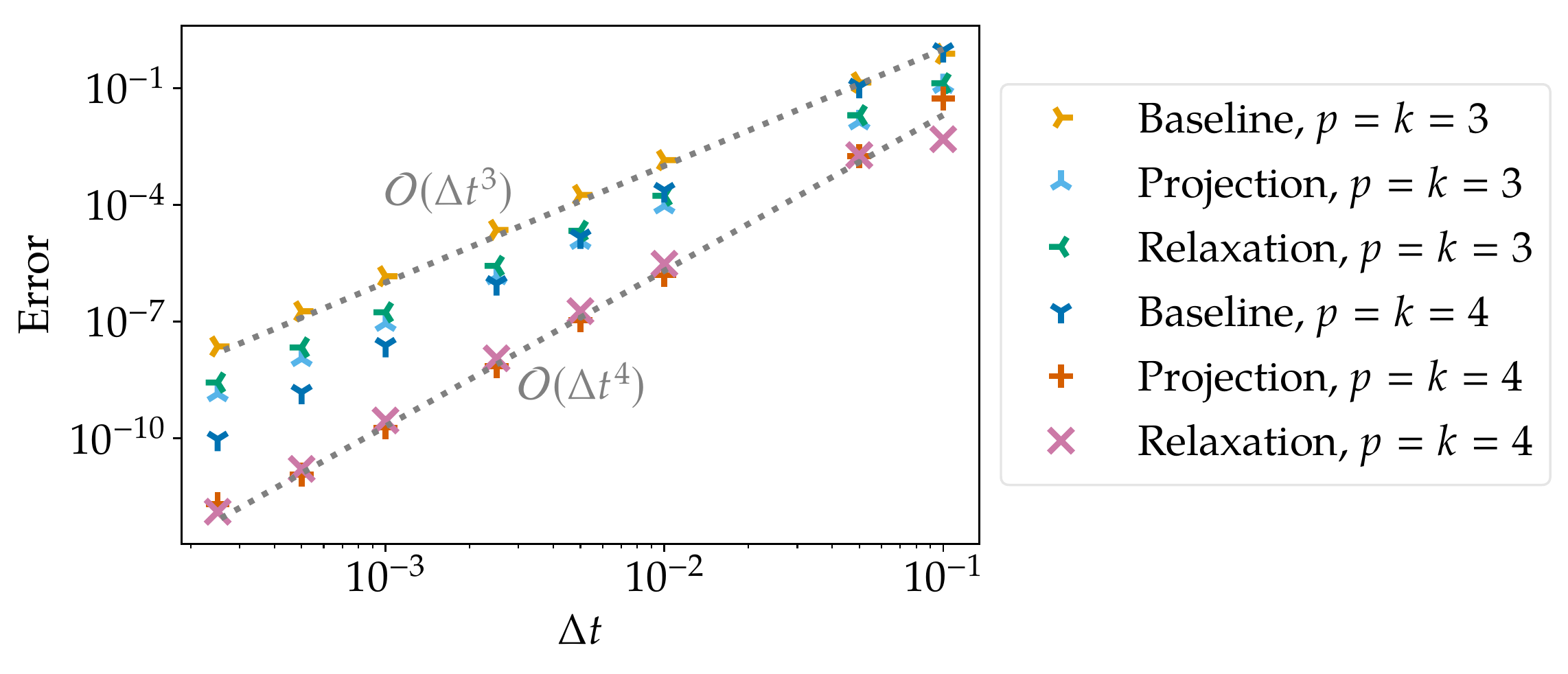}
  \caption{Convergence study for eBDF methods applied to the
           Kepler problem \eqref{eq:kepler} with final time $t = 5$ and
           projection/relaxation methods conserving the energy. The results
           for methods conserving the angular momentum are very similar.}
  \label{fig:kepler-convergence}
\end{figure}

\begin{figure}[ht]
\centering
  \begin{subfigure}{0.49\textwidth}
  \centering
    \includegraphics[width=\textwidth]{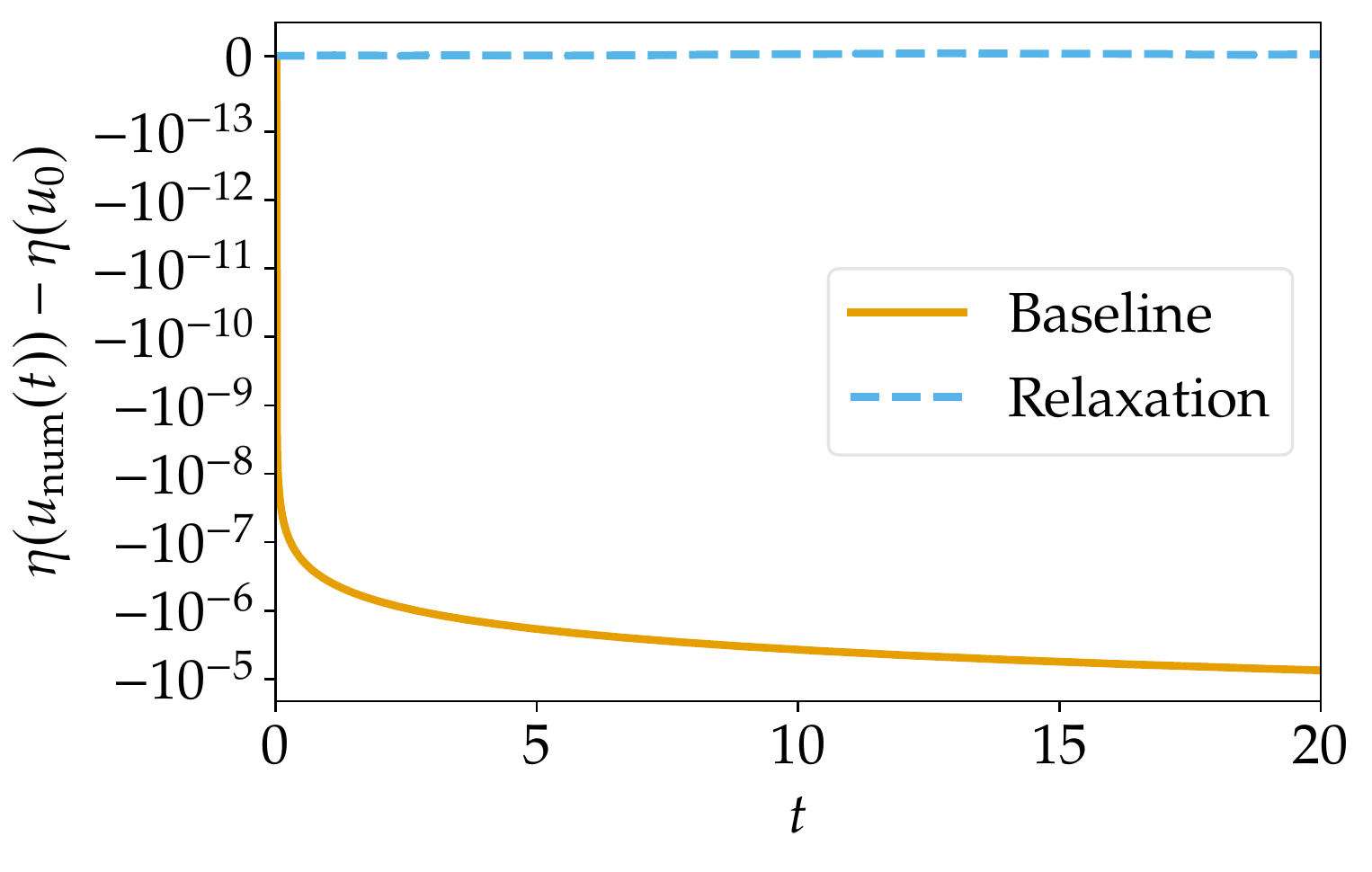}
    \caption{Nonlinear oscillator \eqref{eq:nonlinear-osc},
             $\dt = 0.01$.}
    \label{fig:nonlinear-osc-energy}
  \end{subfigure}%
  \hspace*{\fill}
  \begin{subfigure}{0.49\textwidth}
  \centering
    \includegraphics[width=\textwidth]{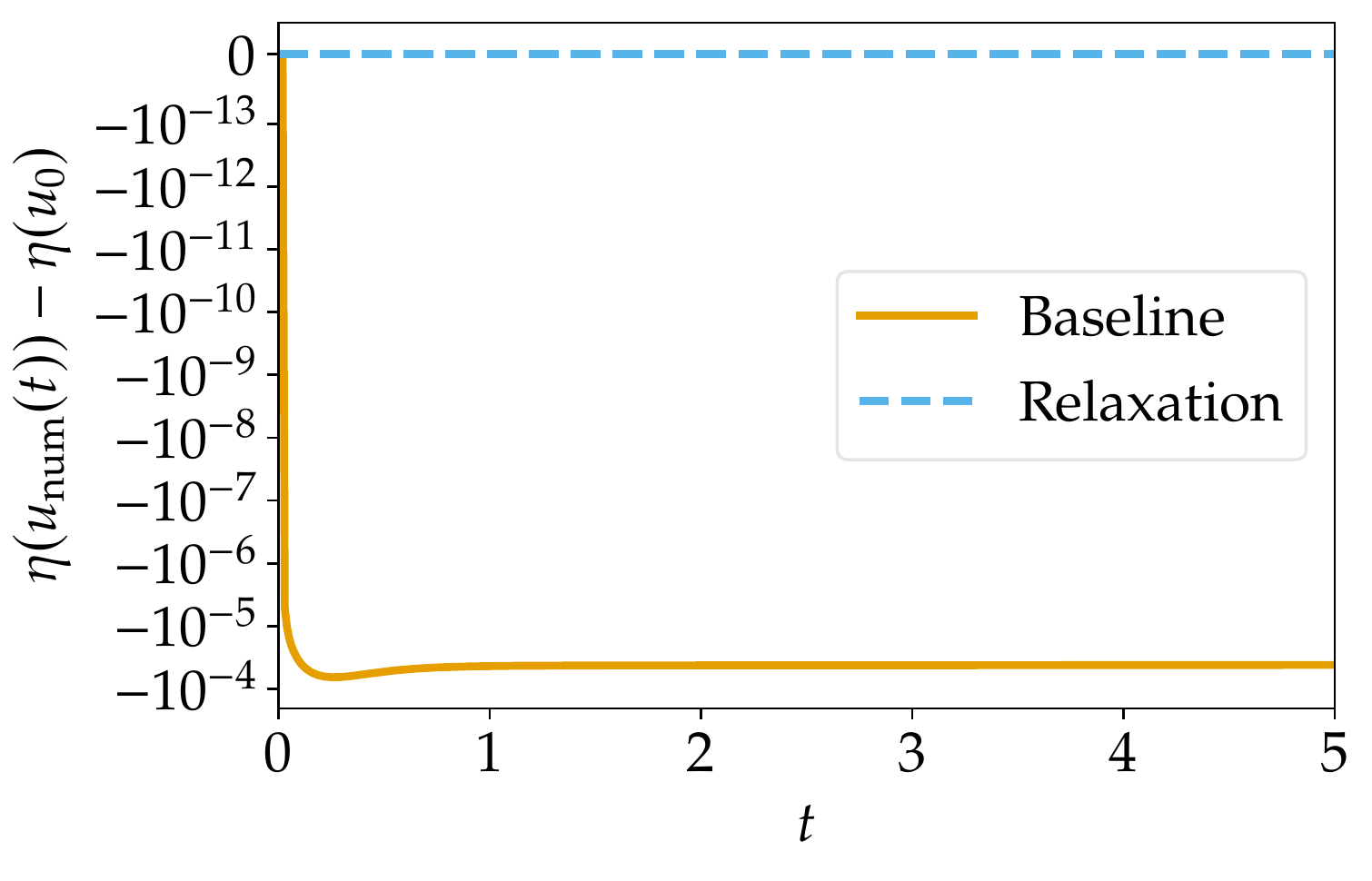}
    \caption{Kepler problem \eqref{eq:kepler},
             $\dt = 0.01$.}
    \label{fig:kepler-energy}
  \end{subfigure}%
  \caption{Energy evolution of Adams(3) methods with and without relaxation
           (adapted time step and coefficients) for representative examples from
           Section~\ref{sec:nonlinear-osc} and \ref{sec:kepler}.}
\end{figure}

\subsection{Dissipated Exponential Entropy}
\label{sec:diss_exp}

Consider the ODE
\begin{equation}
\label{eq:diss_exp}
  \od{}{t} u(t)
  =
  -\exp(u(t)),
  \quad
  u^{0}
  =
  0.5,
\end{equation}
with exponential entropy $\eta(u) = \exp(u)$,
which is dissipated for the analytical solution
\begin{equation}
  u(t)
  =
  -\log\bigl( \e^{-1/2} + t \bigr).
\end{equation}
In contrast to the conservative problems described above, this problem is
dissipative. We choose this test problem to demonstrate the convergence
properties of relaxation methods also in this setting. Since the test problem
is still not stiff, we choose a variety of explicit methods.

Again, the explicit multistep methods described above with or without
projection/relaxation converge with the expected order of accuracy for
this problem.
As examples, third- and fourth-order accurate multistep methods yield the
convergence results shown in Figure~\ref{fig:diss_exp-convergence}.
There is no significant difference between the two different estimates
\eqref{eq:eta-est-1-SSPLMM} and \eqref{eq:eta-est-1-quadrature} for SSP($4,3$).

\begin{figure}[htp]
\centering
  \begin{subfigure}{0.7\textwidth}
  \centering
    \includegraphics[width=\textwidth]{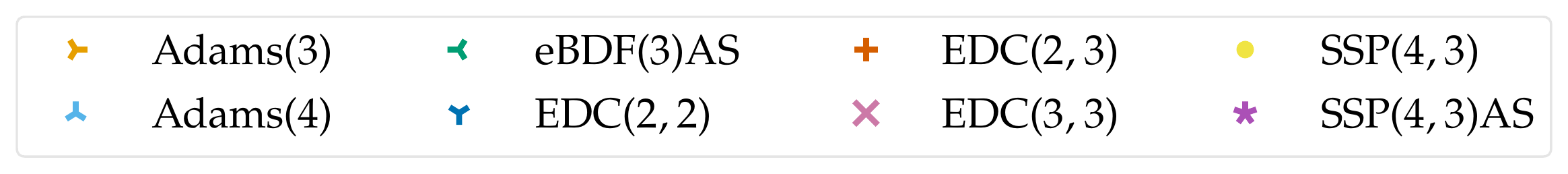}
  \end{subfigure}%
  \\
  \begin{subfigure}{0.33\textwidth}
  \centering
    \includegraphics[width=\textwidth]{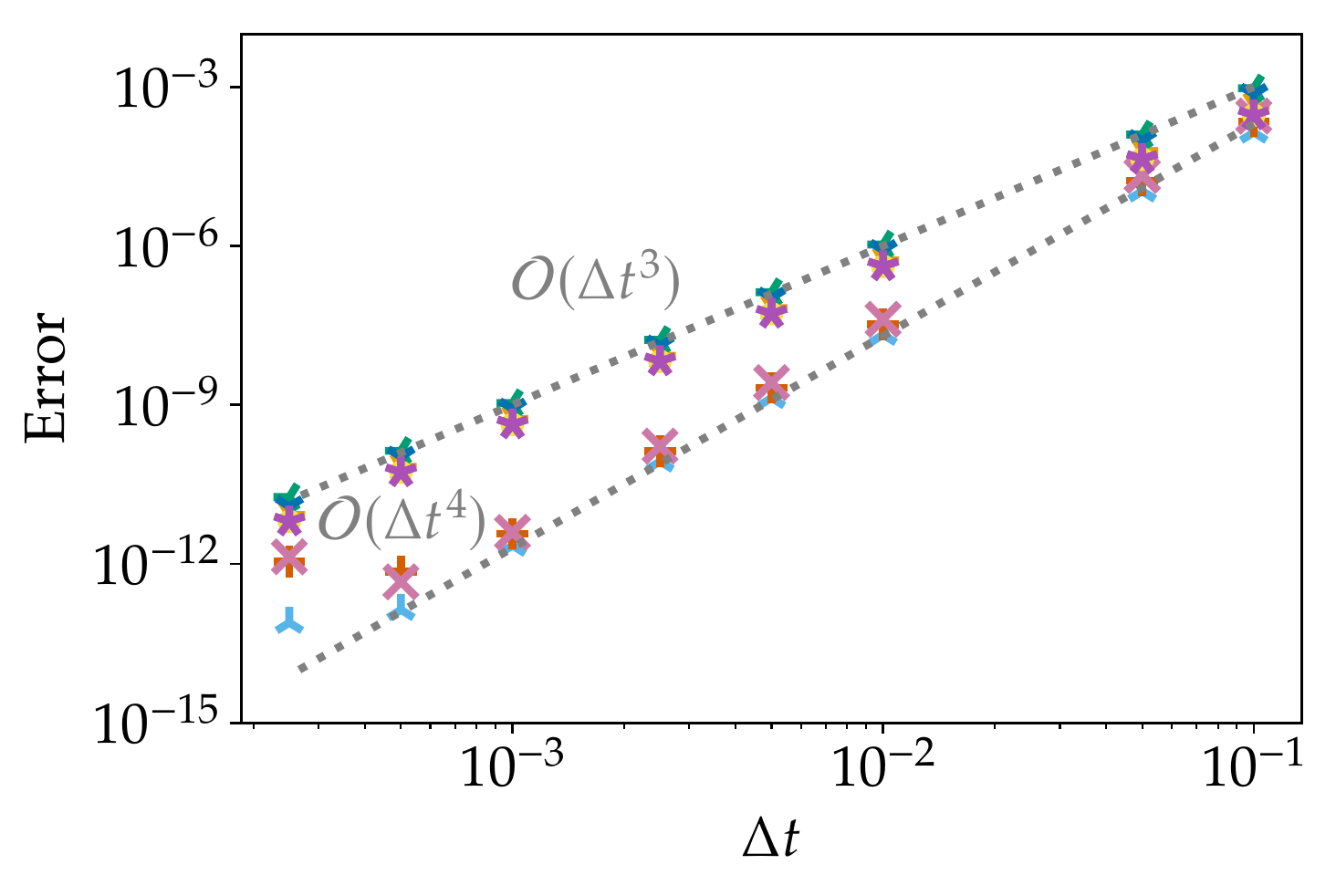}
    \caption{Baseline methods.}
  \end{subfigure}%
  \begin{subfigure}{0.33\textwidth}
  \centering
    \includegraphics[width=\textwidth]{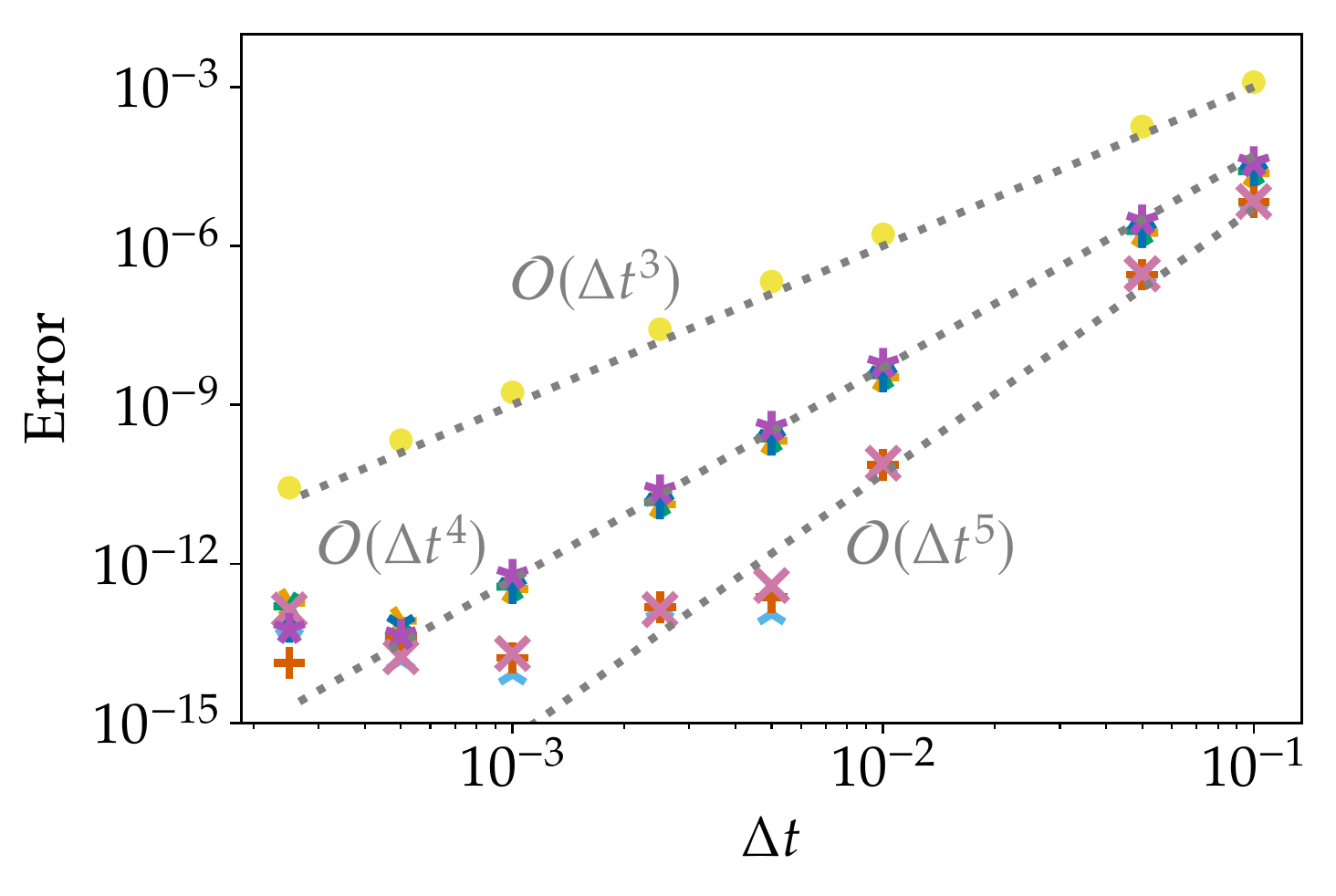}
    \caption{Projection methods.}
  \end{subfigure}%
  \begin{subfigure}{0.33\textwidth}
  \centering
    \includegraphics[width=\textwidth]{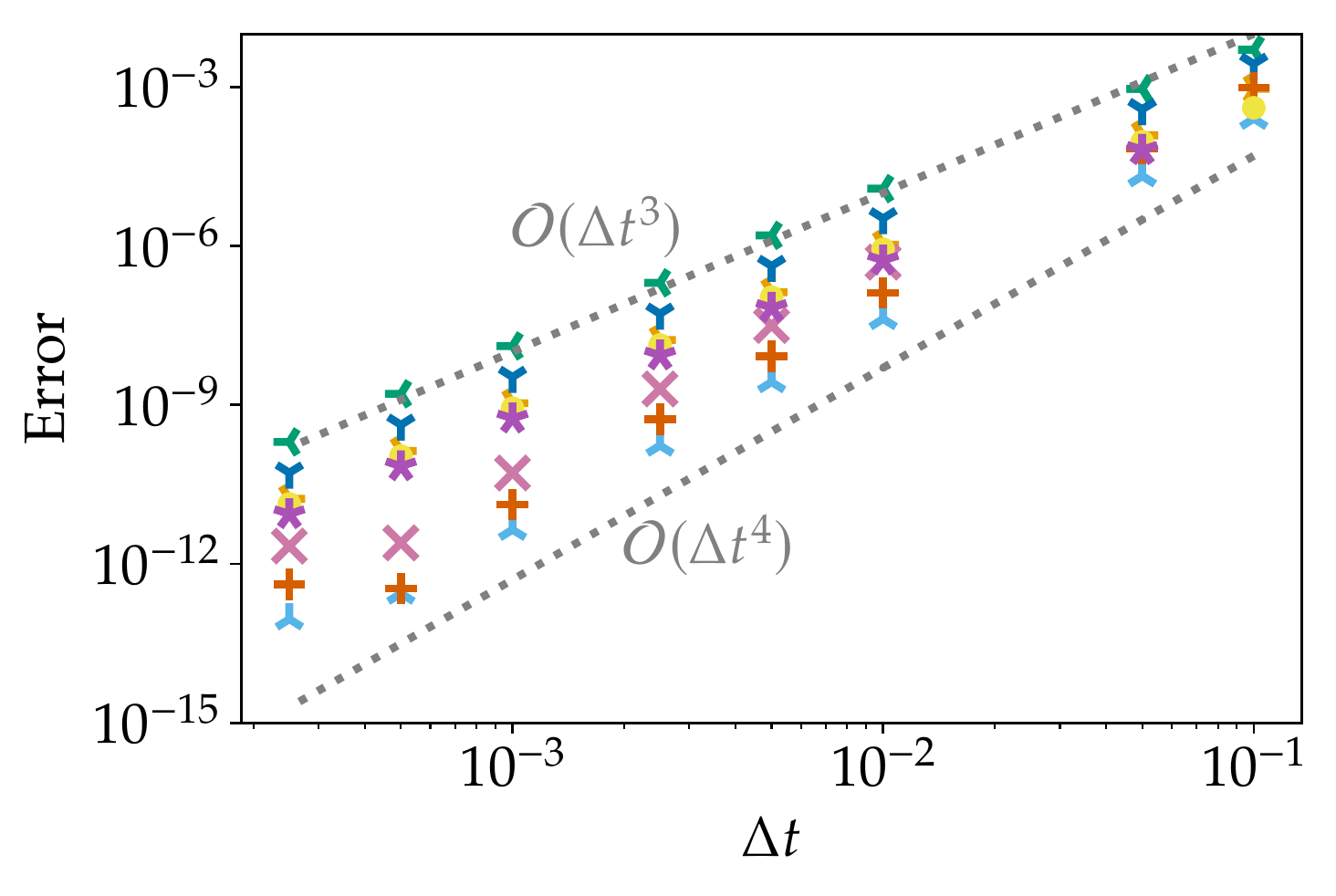}
    \caption{Relaxation methods.}
  \end{subfigure}%
  \caption{Convergence study for linear multistep methods applied to the
           ODE \eqref{eq:diss_exp} with dissipated exponential entropy,
           final time $t = 20$, and projection/relaxation methods based on
           the exponential entropy.}
  \label{fig:diss_exp-convergence}
\end{figure}

Results of fixed step size SSP LMMs with $\nu_i = \alpha_i$
applied to the ODE \eqref{eq:diss_exp} with dissipated exponential entropy
are shown in Figure~\ref{fig:t_tau_diss_exp}.
Because of the exact starting procedure, $t = \tau$ for the first $k$
steps of a $k$-step method. Thereafter, $|t - \tau|$ increases in time.
As discussed in Section~\ref{sec:fixed-fixed-coefficients},
$\max_\tau |t - \tau| = \O( \dtau^{p-2} )$. This can also be
observed for SSP($3, 2$), where the final value of $t - \tau$ is independent
of the time step, and SSP($4, 3$), where the final value of $t - \tau$
decreases proportionally to $\dtau$.
Since $\max_\tau |t - \tau| = \O(1)$ for SSP($3, 2$), the maximal effective
relaxation parameter $\Gamma(\tau) \gamma(\tau)$ is also $\O(1)$.

\begin{figure}[ht]
\centering
  \includegraphics[width=0.8\textwidth]{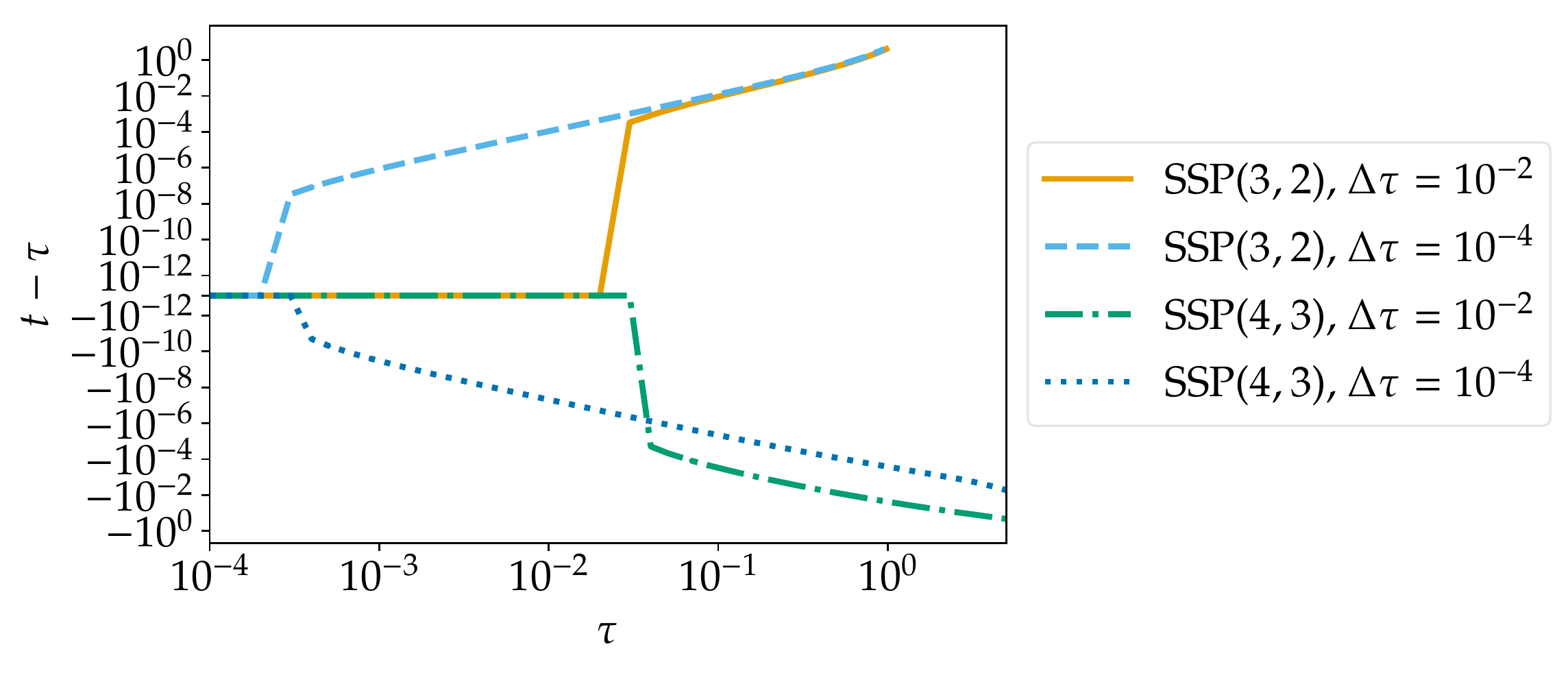}
  \caption{Difference of the physical time $t$ and the pseudotime $\tau$
           for relaxation SSP LMMs with fixed step size $\dtau$ applied
           to the ODE \eqref{eq:diss_exp} with dissipated exponential
           entropy and final time $t = 5$.}
  \label{fig:t_tau_diss_exp}
\end{figure}

If $\nu_i \neq \alpha_i$, e.g.\ if $m = 1$ and $\nu_i = \delta_{i,0}$ as usual for
methods with adapted step sizes, no solution $\gamma > 0$ can be found
for this problem and the SSP LMMs with fixed step sizes.
The Adams($2$) method with fixed step sizes applied to this problem works well
if the final time is reduced to $t = 2.5$. For larger final times,
the error of the numerical solutions grows because of the growth of
$\Gamma(\tau)$. Then, the time step $\dtau$ has to be reduced to get
acceptable solutions. The Adams methods with adapted step sizes can
be applied successfully to this problem with much larger values of the
time step $\dt$, in accordance with the analysis of
Section~\ref{sec:fixed-fixed-coefficients}.

\subsection{Korteweg--de Vries Equation}
\label{sec:KdV}

The Korteweg--de Vries (KdV) equation
\begin{equation}
\label{eq:KdV}
  \partial_t u(t,x) + \partial_x \frac{u(t,x)^2}{2} + \partial_x^3 u(t,x) = 0
\end{equation}
is well-known in the literature as a nonlinear PDE which admits soliton
solutions of the form
\begin{equation}
  u(t,x) = A \cosh\bigl( \sqrt{3 A} (x - c t - \mu) / 6 \bigr)^{-2},
  \quad
  c = \nicefrac{A}{3},
\end{equation}
where $A \geq 0$ is the amplitude, $c$ the wave speed, and $\mu$ an
arbitrary constant. The KdV equation possesses an infinite hierarchy
of conserved integral functionals, including the mass (with density $u$)
and the energy (with density $u^2$).

Numerical methods that conserve both the mass and the energy result in
an asymptotic error growth that is only linear in time, while other methods
will in general yield an asymptotically quadratic error growth
\cite{frutos1997accuracy}. If only the energy is conserved, the error is
usually reduced at first and the quadratic error growth can be seen later
than for methods that do not conserve the energy.

We choose this test problem because it is a stiff nonlinear problem. Hence,
we use implicit methods to deal with the stiffness. In addition, this problem
demonstrates improved qualitative properties of conservative schemes and
the importance of preserving linear invariants.
Moreover, this stiff problem demonstrates that important stability properties
are not lost by introducing relaxation, even for robust, $A$-stable methods.

Here, we use the mass- and energy-conservative Fourier collocation
semidiscretization described in \cite{ranocha2020relaxationHamiltonian}
with $N = 64$ modes in an interval of length $L = 80$ for the amplitude
$A = 2$.
Integrating the resulting stiff ODE in time with the BDF($2$) method and
a time step $\dt = 0.1$ yields the results shown in Figure~\ref{fig:KdV}.
The error for both the projection and the relaxation grows linearly in time
at first. For the projection method not conserving the total mass, the error
starts to grow quadratically shortly before it saturates (since there is no
overlap of the numerical solution and the analytical solution anymore).

\begin{figure}[htp]
\centering
  \begin{subfigure}{0.7\textwidth}
  \centering
    \includegraphics[width=\textwidth]{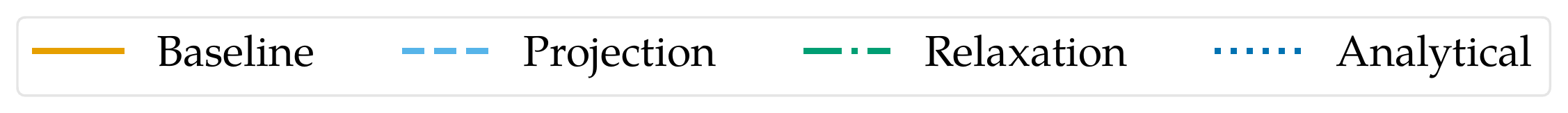}
  \end{subfigure}%
  \\
  \begin{subfigure}{0.49\textwidth}
  \centering
    \includegraphics[width=\textwidth]{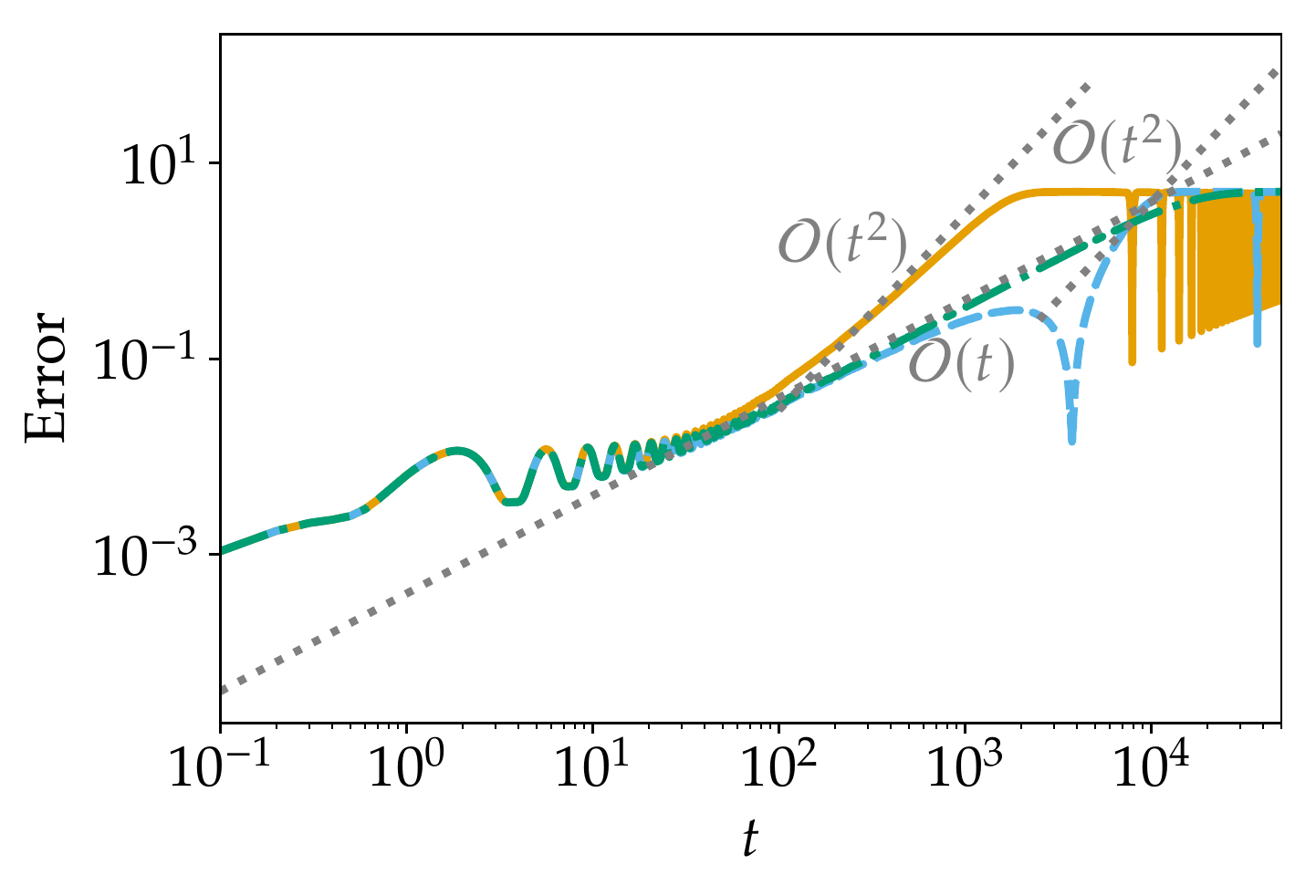}
    \caption{Error growth in time.}
  \end{subfigure}%
  ~
  \begin{subfigure}{0.49\textwidth}
  \centering
    \includegraphics[width=\textwidth]{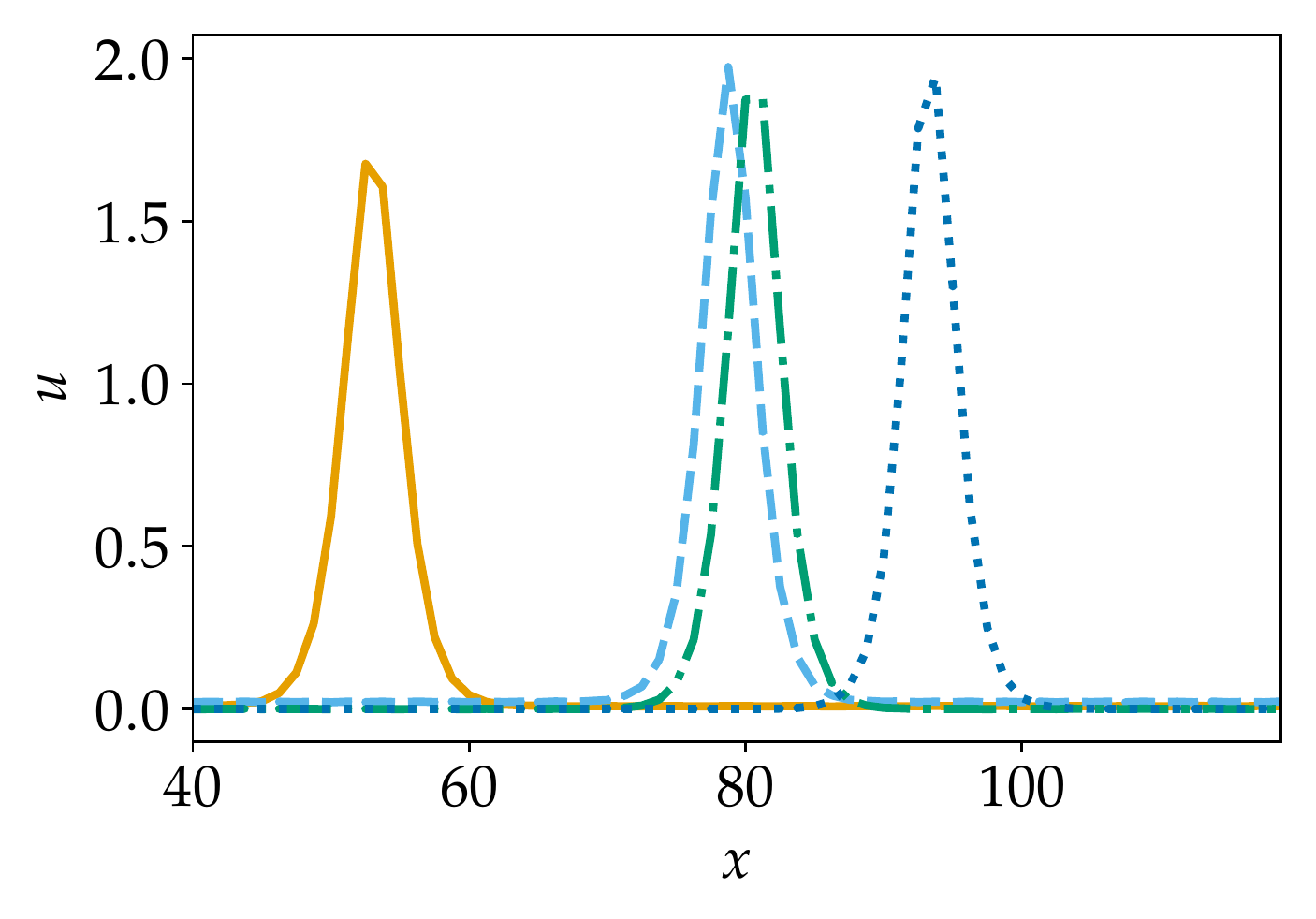}
    \caption{Numerical solutions at the final time.}
  \end{subfigure}%
  \\
  \begin{subfigure}{0.49\textwidth}
  \centering
    \includegraphics[width=\textwidth]{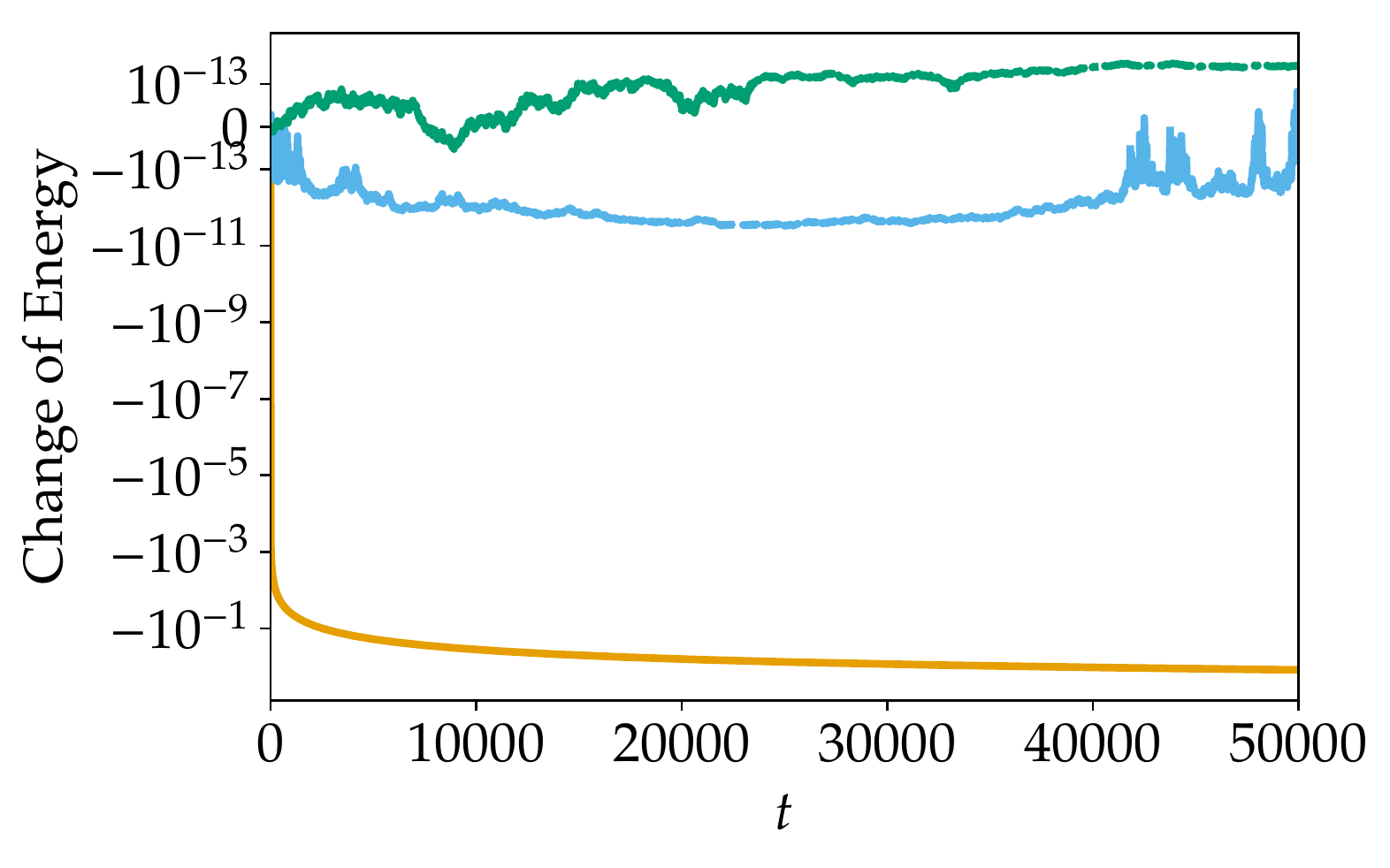}
    \caption{Change of the total energy.}
  \end{subfigure}%
  ~
  \begin{subfigure}{0.49\textwidth}
  \centering
    \includegraphics[width=\textwidth]{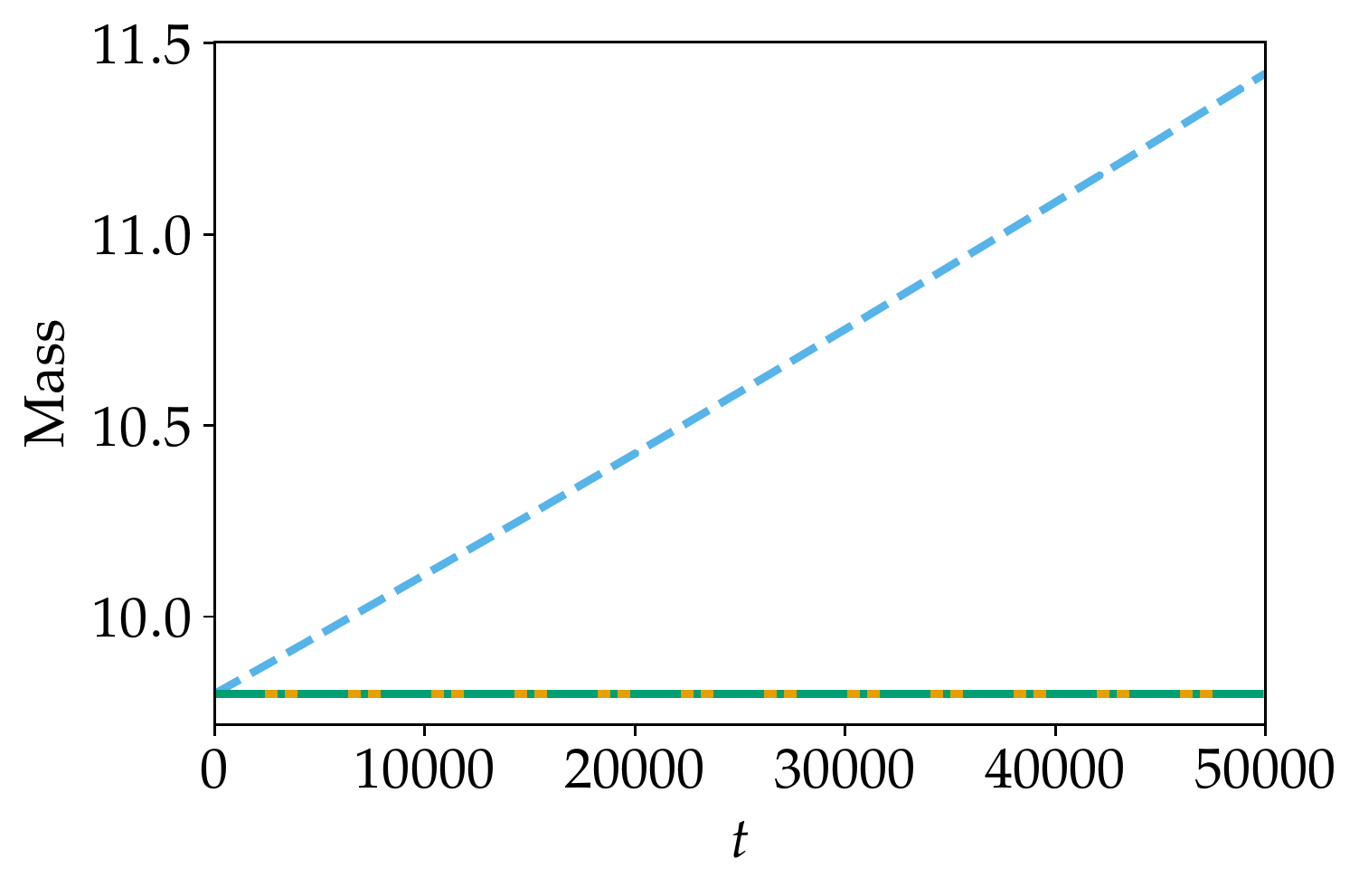}
    \caption{Change of the total mass.}
  \end{subfigure}%
  \caption{Numerical solutions of the KdV equation \eqref{eq:KdV} with final
           time $t = \num{5.0e4}$ and projection/relaxation methods conserving
           the energy applied to a mass- and energy-conservative Fourier
           collocation method. The baseline method is BDF2.}
  \label{fig:KdV}
\end{figure}

We would like to point out that we considered a long-time integration for this
stiff nonlinear PDE example. The (phase) error grows in time and will reach
\SI{100}{\percent} for every time step at some time (which increases for
decreasing step size $\dt$). In particular, having an error of \SI{100}{\percent}
after more than 400 periods of the traveling wave solution does not indicate
instabilities caused choosing the time step $\dt$ too big. Instead, this behavior
is expected and occurs also for smaller time steps (possibly after more periods).

\subsection{Compressible Euler Equations}
\label{sec:euler}

Here, we apply a second-order entropy-conservative finite difference method
\cite{tadmor2003entropy} using the entropy-conservative numerical flux
of \cite[Theorem~7.8]{ranocha2018thesis} for the compressible Euler equations
of an ideal gas in one space dimension. The initial condition
\begin{equation}
  \rho_0(x) = 1 + \frac{1}{2} \sin(\pi x),
  \quad
  v_0(x) = 1,
  \quad
  p_0(x) = 1,
\end{equation}
where $\rho$ is the density, $v$ the velocity, and $p$ the pressure,
results in a smooth and entropy-conservative solution in the periodic
domain $[0, 2]$. Integrating the entropy-conservative semidiscretization
with $N = 100$ grid nodes in time with SSP($4, 3$), where the starting
values have been obtained with the relaxation version of the classical
third-order, three-stage SSP Runge--Kutta method, yields the results
shown in Figure~\ref{fig:euler}.
Clearly, the baseline scheme is not entropy-conservative while the projection
method does not conserve the total mass. In contrast, the relaxation method
conserves both functionals.

We choose this test problem because it is a non-stiff nonlinear PDE problem
where the preservation of linear invariants is particularly interesting.
We choose SSP methods since these are often applied to computational fluid
dynamics; results for other time integration methods are similar since the
SSP property is not crucial here.

\begin{figure}[htp]
\centering
  \begin{subfigure}{0.55\textwidth}
  \centering
    \includegraphics[width=\textwidth]{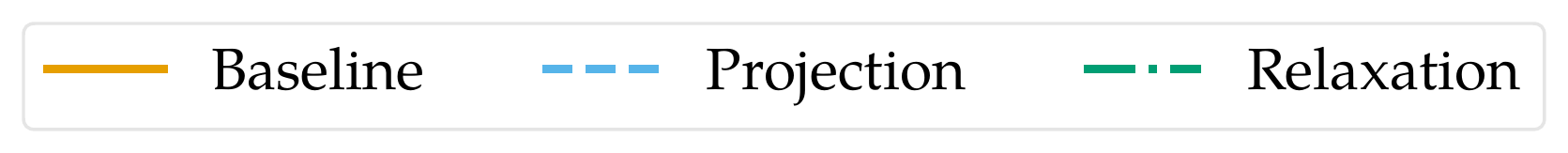}
  \end{subfigure}%
  \\
  \begin{subfigure}{0.49\textwidth}
  \centering
    \includegraphics[width=\textwidth]{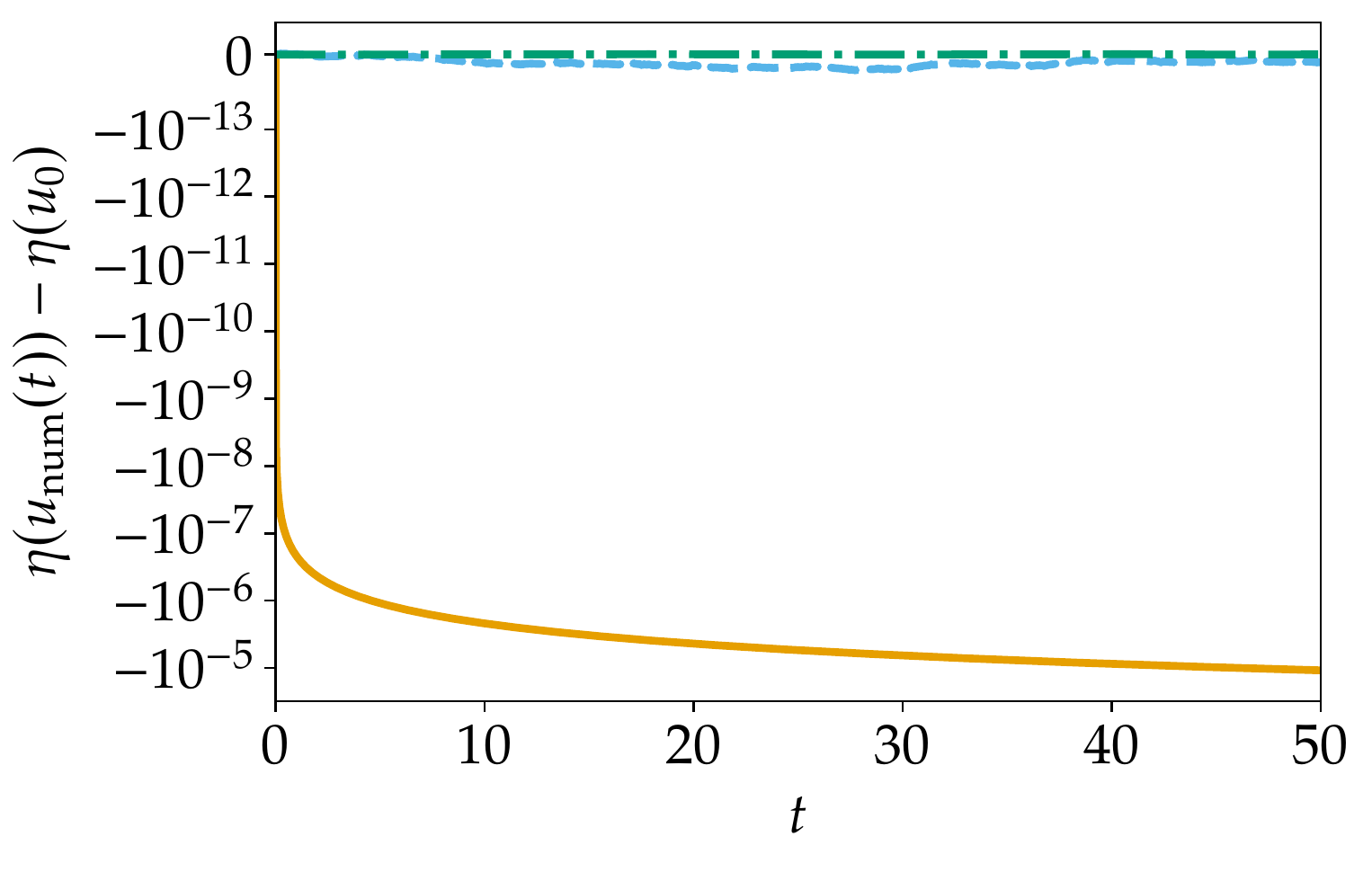}
    \caption{Total entropy change.}
  \end{subfigure}%
  \hfill
  \begin{subfigure}{0.49\textwidth}
  \centering
    \includegraphics[width=\textwidth]{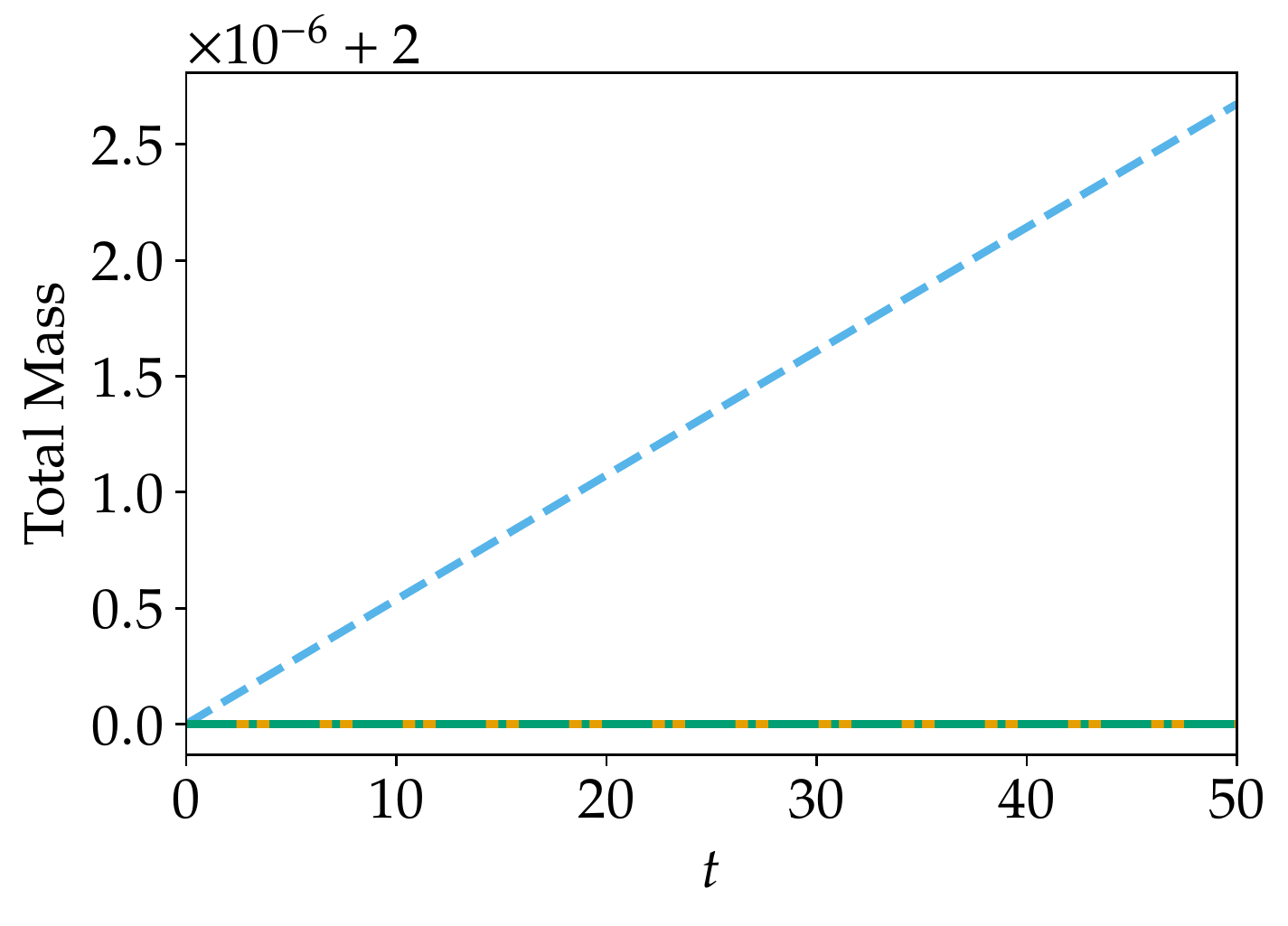}
    \caption{Total mass.}
  \end{subfigure}%
  \caption{Numerical solutions of the compressible Euler equations
           with final time $t = 50$ and time step $\dt = 0.1 \Delta x$
           using entropy-conservative finite differences and
           projection/relaxation versions of SSP($4, 3$).}
  \label{fig:euler}
\end{figure}

\subsection{Burgers' Equation}
\label{sec:burgers}

Solutions of Burgers' equation
\begin{equation}
\label{eq:burgers}
  \partial_t u(t,x) + \partial_x \frac{u(t,x)^2}{2} = 0,
  \quad
  u(0,x) = \exp(-30 x^2),
\end{equation}
in the periodic domain $[-1, 1]$ develop shocks in a finite time.
Hence, energy-conservative methods are not appropriate. Here, we
apply the same energy-dissipative semidiscretization used in
\cite{ketcheson2019relaxation} in the context of relaxation
Runge--Kutta methods, which can be written as
\begin{equation}
  \od{}{t} u_i(t)
  =
  - \frac{\fnum(u_{i}, u_{i+1}) - \fnum(u_{i-1}, u_{i})}{\Delta x}.
\end{equation}
The energy-dissipative numerical flux is obtained by adding
some dissipation to the energy-conservative flux, resulting in
\begin{equation}
  \fnum(u_-, u_+)
  =
  \frac{u_-^2 + u_- u_+ + u_+^2}{6}
  - \epsilon (u_+ - u_-).
\end{equation}
These semidiscretizations are integrated in time with SSP($3, 2$)
and SSP($4, 3$), where the starting values have been obtained
with the relaxation version of the classical third order,
three-stage SSP Runge--Kutta method.

We choose this test problem because it is a non-stiff nonlinear PDE problem
with decaying energy to demonstrate improved qualitative behavior also in
this context. We choose SSP methods again since these are often applied to
computational fluid dynamics problems.

The changes of the total energy and mass of the numerical solutions
and a semidiscrete reference solution are visualized in
Figure~\ref{fig:burgers_EC_diss}. The baseline schemes are either
anti-dissipative (for SSPRK($3, 2$)) or too dissipative
(for SSPRK($4, 3$)) compared to the reference solution, similarly
to results for Runge--Kutta methods shown in
\cite{ketcheson2019relaxation}. In contrast, the energy dissipation
of both the relaxation and the projection versions agrees very well
with the reference solution. However, the projection schemes change
the total mass while the relaxation methods conserve this invariant
of the PDE \eqref{eq:burgers}.

\begin{figure}[htp]
\centering
  \begin{subfigure}{\textwidth}
  \centering
    \includegraphics[width=\textwidth]{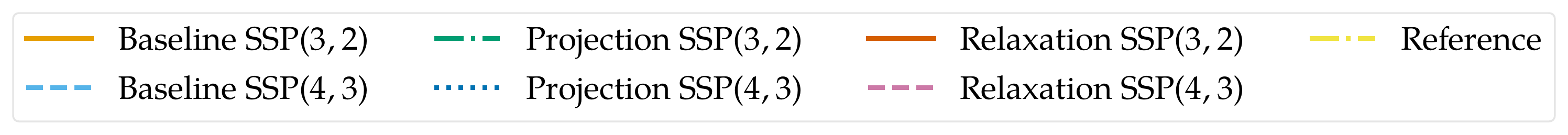}
  \end{subfigure}%
  \\
  \begin{subfigure}{0.49\textwidth}
  \centering
    \includegraphics[width=\textwidth]{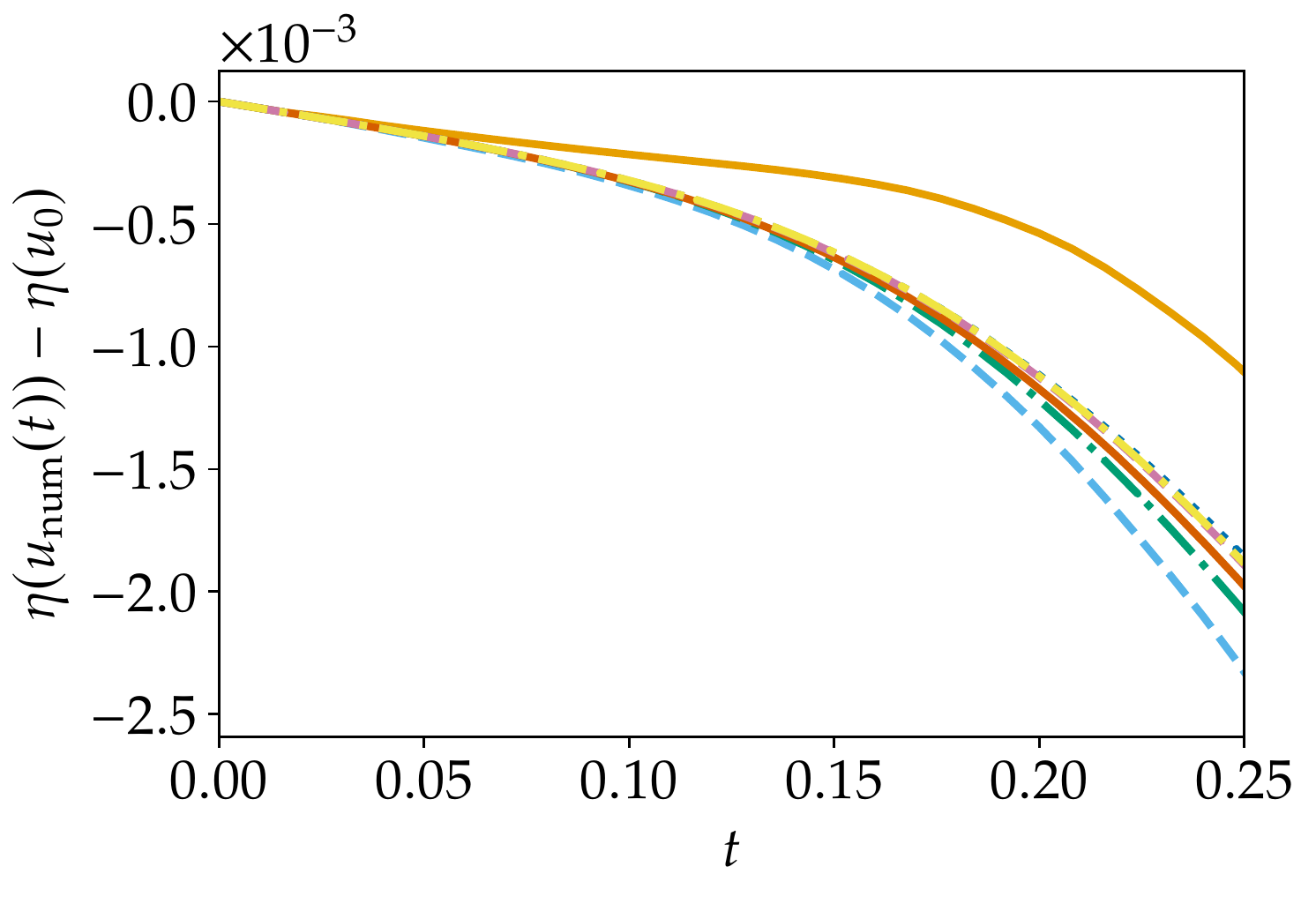}
    \caption{Total energy change.}
  \end{subfigure}%
  \hfill
  \begin{subfigure}{0.49\textwidth}
  \centering
    \includegraphics[width=\textwidth]{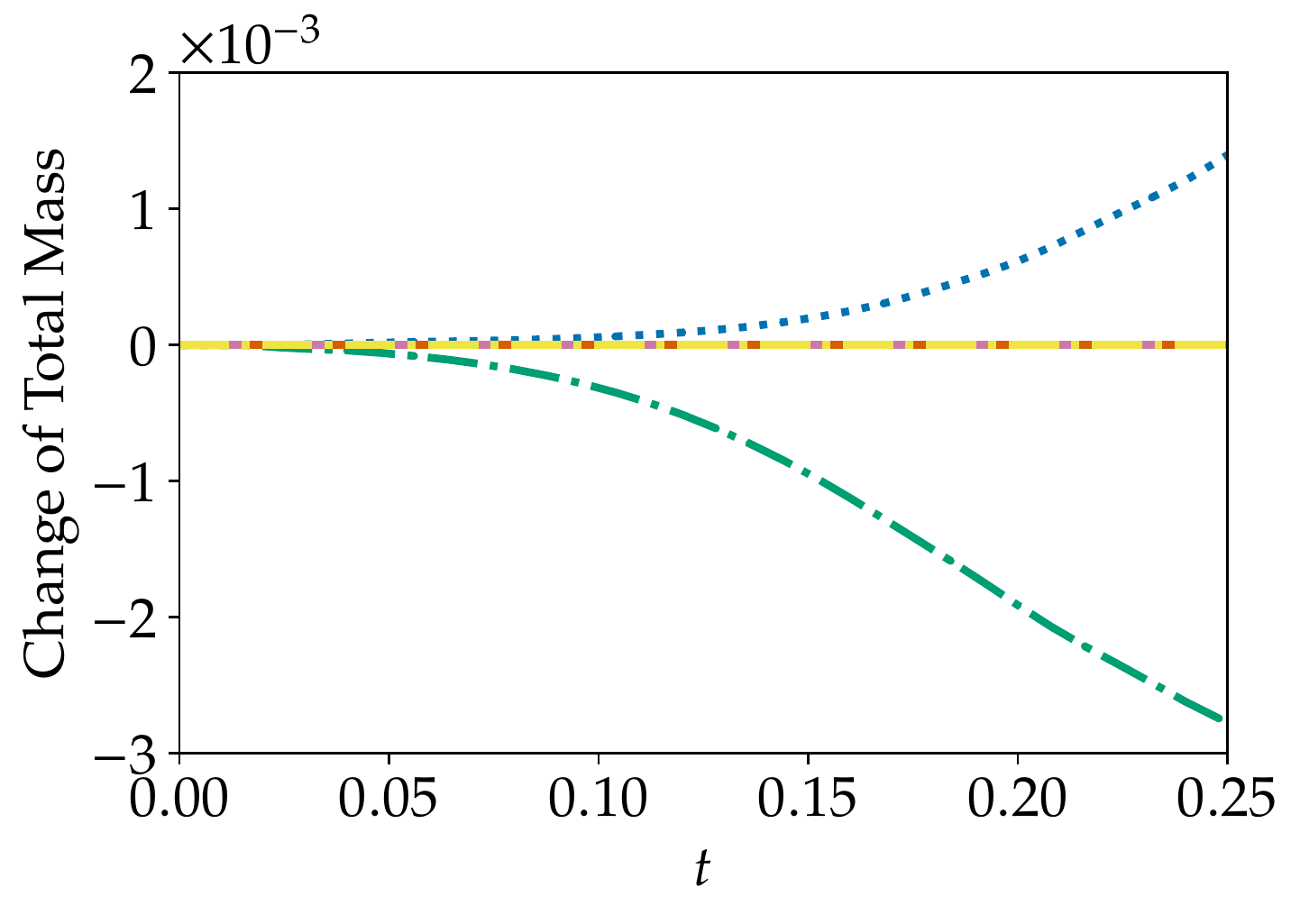}
    \caption{Total mass change.}
  \end{subfigure}%
  \caption{Numerical solutions of Burgers' equation with
           final time $t = 0.25$ and time step $\dt = 0.2 \Delta x$
           using energy-dissipative finite differences and
           projection/relaxation versions of SSP($3, 2$) and SSP($4, 3$).}
  \label{fig:burgers_EC_diss}
\end{figure}

Adding instead some dissipation to a semidiscretization based on the
central numerical flux
\begin{equation}
  \fnum(u_-, u_+)
  =
  \frac{u_-^2 + u_+^2}{4}
  - \epsilon (u_+ - u_-)
\end{equation}
does not yield a provably energy-dissipative semidiscretization in
general. However, the relaxation methods still improve the energy
evolution as visualized in Figure~\ref{fig:burgers_central_diss}.

\begin{figure}[htp]
\centering
  \begin{subfigure}{\textwidth}
  \centering
    \includegraphics[width=\textwidth]{figures/burgers_legend}
  \end{subfigure}%
  \\
  \begin{subfigure}{0.49\textwidth}
  \centering
    \includegraphics[width=\textwidth]{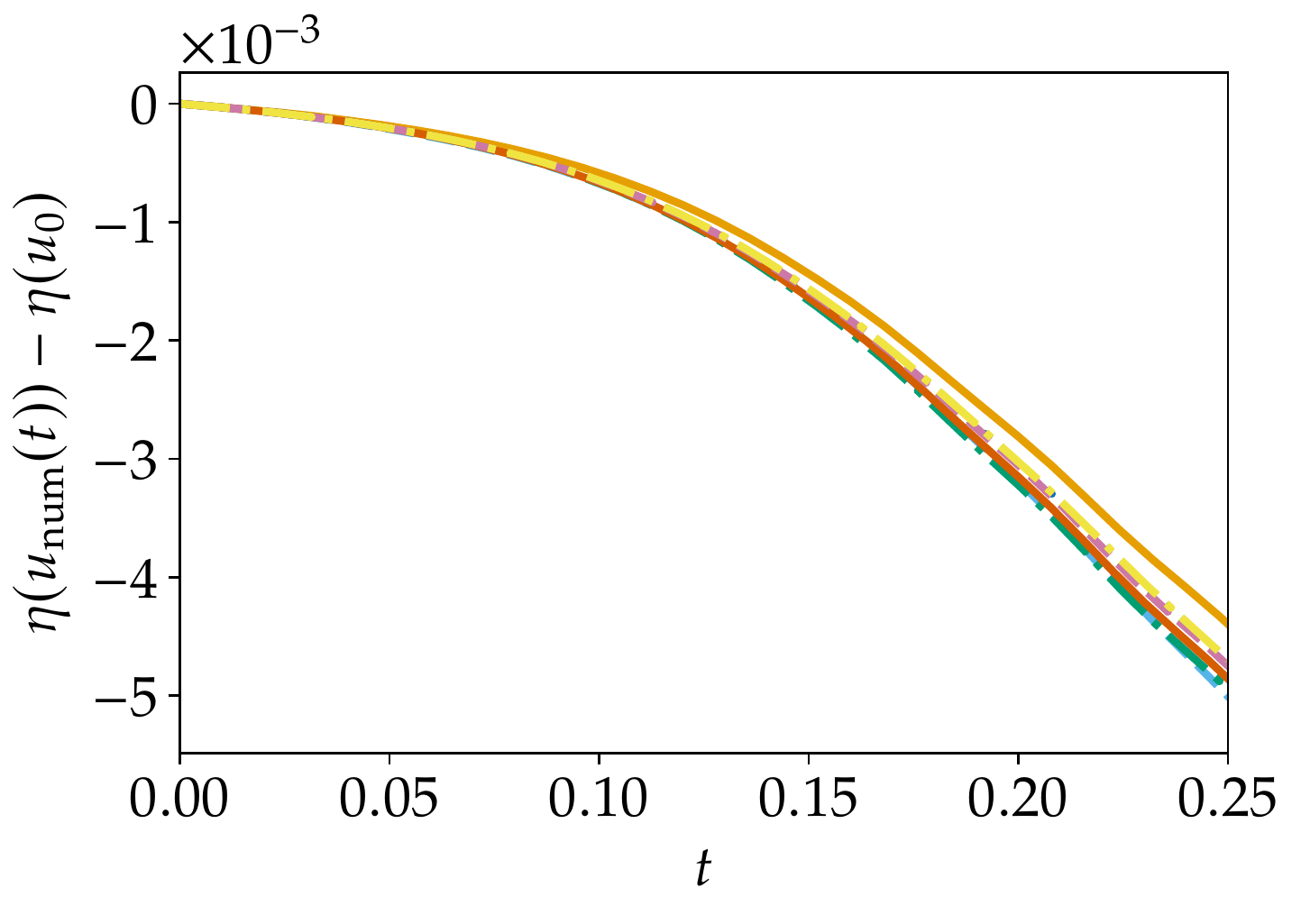}
    \caption{Total energy change.}
  \end{subfigure}%
  \hfill
  \begin{subfigure}{0.49\textwidth}
  \centering
    \includegraphics[width=\textwidth]{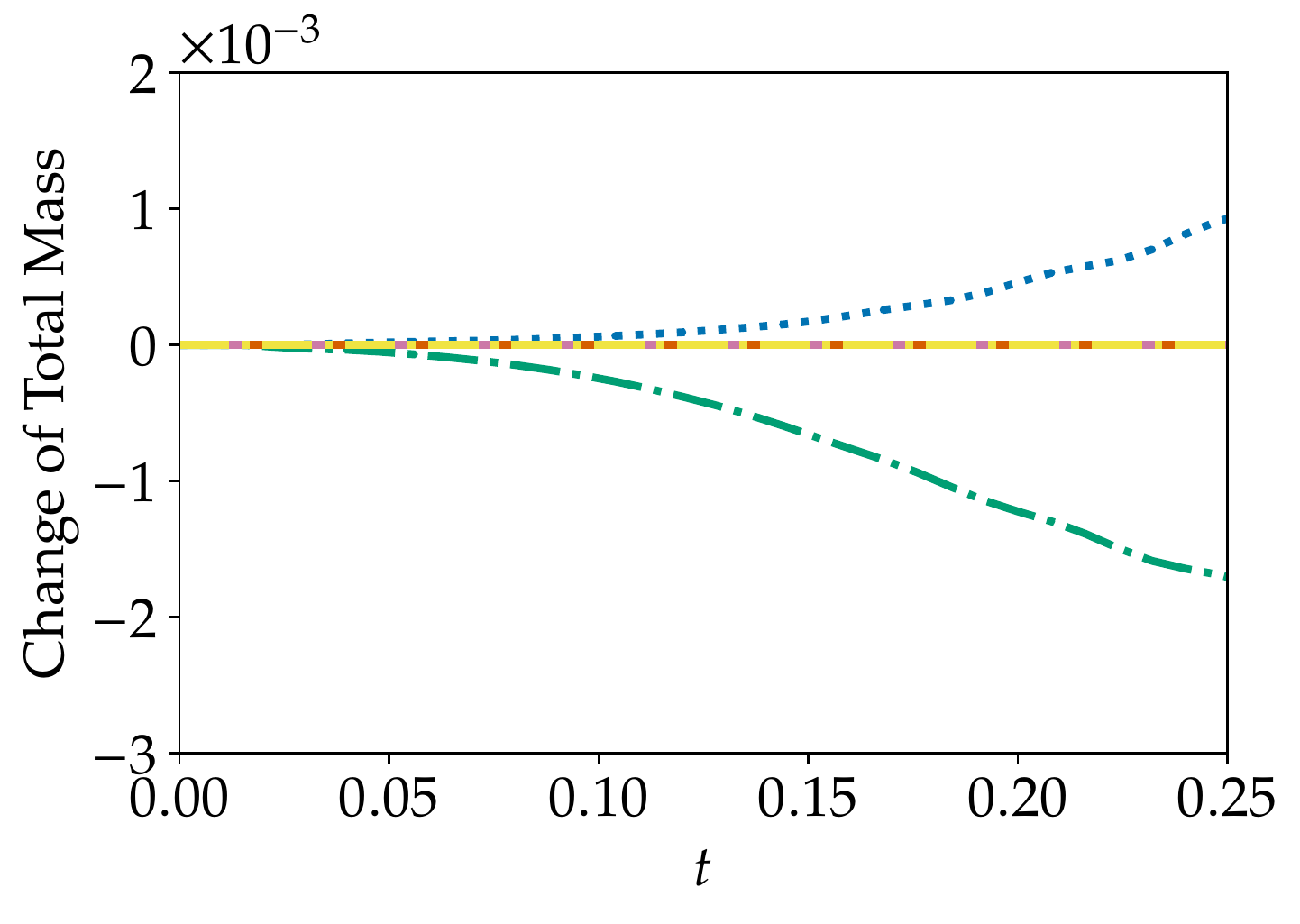}
    \caption{Total mass change.}
  \end{subfigure}%
  \caption{Numerical solutions of Burgers' equation with
           final time $t = 0.25$ and time step $\dt = 0.2 \Delta x$
           using central finite differences with dissipation and
           projection/relaxation versions of SSP($3, 2$) and SSP($4, 3$).}
  \label{fig:burgers_central_diss}
\end{figure}

\subsection{Linear Advection with Inflow}
\label{sec:linear_advection}

Solutions of the linear advection equation
\begin{equation}
\label{eq:linear_advection}
\begin{aligned}
  \partial_t u(t,x) + \partial_x u(t,x) &= 0,
  && t \in (0, 6),\, x \in (0, 3),
  \\
  u(0,x) &= 0,
  && x \in [0, 3],
  \\
  u(t, 0) &= \sin(\pi t),
  && t \in [0, 6],
\end{aligned}
\end{equation}
do neither conserve nor dissipate the energy $\frac{1}{2} \norm{u}_{L^2}^2$
because of the boundary condition. Instead, the energy of the analytical
solution increases till $t = 3$ to its final value $0.75$ and stays
constant thereafter.
We choose this test problem because it results in a non-monotone behavior of
the energy. We choose SSP methods again since these are often applied to
computational fluid dynamics problems.

Using the classical second-order summation-by-parts operator with
simultaneous approximation terms to impose the boundary condition weakly
\cite{svard2014review} on $N = 200$ uniformly spaced nodes yields an ODE
with a similar behavior. As visualized in Figure~\ref{fig:linear_advection},
the baseline SSP($3, 2$) method results in an increase of the energy that
is slightly bigger than that of the reference solution obtained by
SSP($4, 3$) with much smaller time steps. Instead, the energy variation
enforced by the projection and relaxation methods is visually
indistinguishable from the reference value. The slight variations of the
energy for $t > 3$ are caused by the spatial semidiscretizations using a
weak imposition of the boundary condition.

This example demonstrates that relaxation methods can also be useful for
non-dissipative systems where energy or entropy estimates can still
be obtained and are of interest. In \cite{ranocha2020relaxation}, a similar
lid-driven cavity flow with a heated wall for the Navier--Stokes equations
has been solved with relaxation Runge--Kutta methods.

\begin{figure}
\centering
  \includegraphics[width=0.8\textwidth]{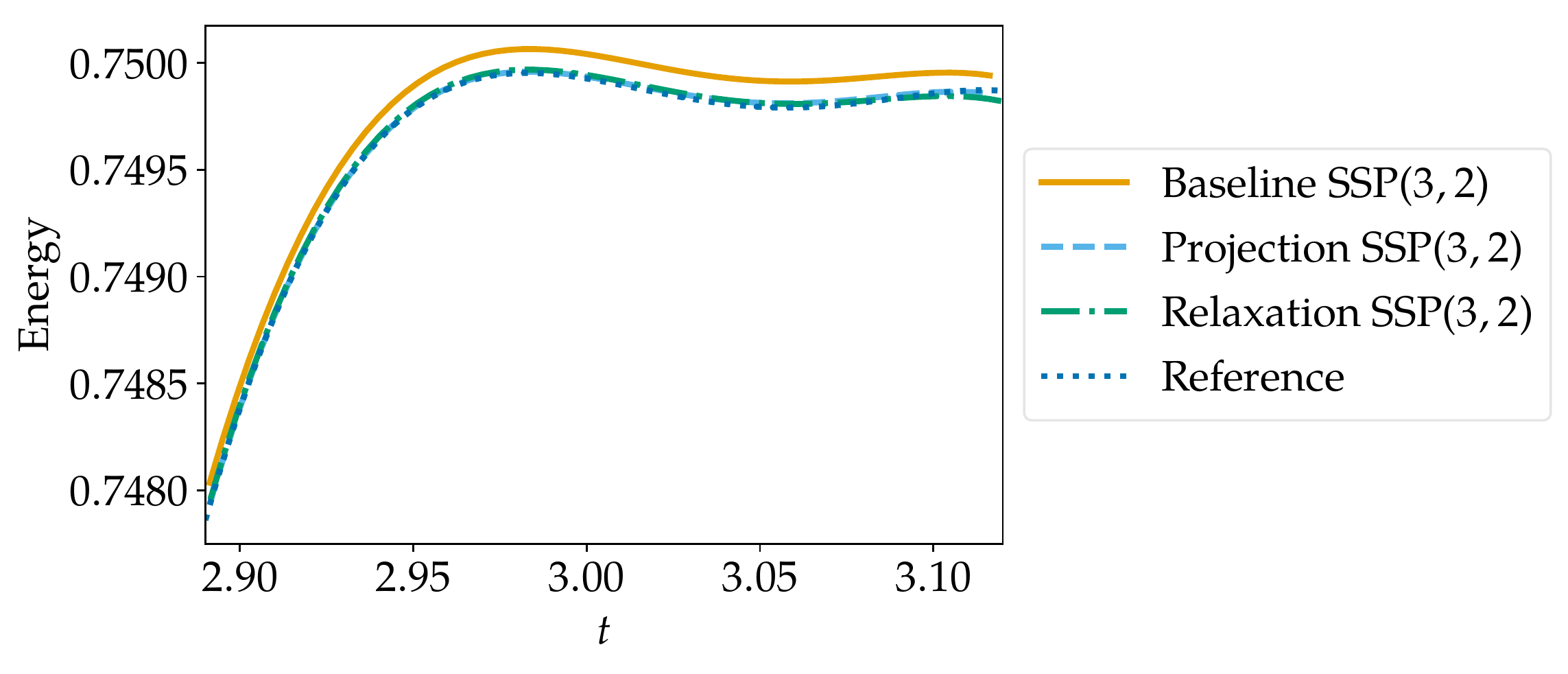}
  \caption{Energy of numerical solutions of the linear advection equation
           \eqref{eq:linear_advection} computed with or without
           projection/relaxation methods and SSP(3, 2).}
  \label{fig:linear_advection}
\end{figure}

\subsection{Some Remarks on the Costs of Relaxation}
\label{sec:costs}

For a quadratic or cubic entropy functional, the relaxation parameter can
be computed explicitly.  For more general entropies, it must be found via
numerical iteration.

We have used SciPy \cite{virtanen2019scipy} to solve the scalar equations for
the relaxation parameter $\gamma^n$.  Our implementations are based on functions
written in pure Python and are not adapted to the specific problems. A detailed
assessment of the computational cost of solving for the relaxation parameter
$\gamma^n$ requires a more refined implementation and is beyond the scope of
this work. Nevertheless, we give some preliminary discussion of the costs here,
emphasizing that these numbers should be viewed as very crude upper bounds on
the cost associated to relaxation.

For explicit time-stepping methods and very inexpensive right-hand sides, the
costs of naive implementations of the relaxation approach are significant.
For the example of the compressible Euler equations in Section~\ref{sec:euler},
relaxation methods increase the runtime by a factor between two and three.
For Burgers' equation as in Section~\ref{sec:burgers}, the total runtime
increases by less than a factor of two.
Although the relaxation technique increases the runtime significantly in this
naive implementation, it is still cheaper than decreasing the time step to get
basically the same results.

Relaxation methods were applied to large-scale PDE discretizations of
the compressible Euler and Navier-Stokes equations in three space dimensions in
\cite{ranocha2020relaxation,ranocha2020fully}. In that context, the cost
associated with solving the single scalar equation for the relaxation parameter
$\gamma^n$ is much less than the cost of evaluating
the right-hand side of the ODE during one time step.

When implicit time-stepping is used, the cost of relaxation is less significant.
This was already discussed in \cite{ranocha2020relaxationHamiltonian} for the
KdV equation. There, the energy-conserving methods decreased the runtime despite
the added overhead of computing the relaxation or projection step. The reason
for this is the improved accuracy and hence decreased costs to solve the
nonlinear stage equations. The relaxation method benefited additionally from
taking larger effective time steps since the relaxation parameter $\gamma^n > 1$.

\section{Summary and Conclusions}
\label{sec:summary}

We have extended the framework of relaxation methods for the numerical
solution of initial-value problems from Runge--Kutta methods to general
time integration schemes with order of accuracy $p \geq 2$.
By solving a single scalar algebraic equation per time step, the evolution
in time of a given functional can be preserved. This includes
functionals that are conserved or dissipated, as well as others
for which estimates of the time evolution are available.
For convex functionals, additional insights such as the possibility to
add dissipation in time have been provided.

For certain classes of relaxation linear multistep methods, high-order
accuracy is still attained even if a fixed-coefficient
method is used. Nevertheless, we recommend the use of methods that
correctly account for the step size variation, since such methods
gave overall better results in numerical tests.

In contrast to orthogonal projection methods, relaxation methods preserve
all linear invariants that are preserved by the baseline time integration
scheme (which are all linear invariants of the ODE for general linear
methods). This property can be very important, e.g.\ for conservation laws.

We have also studied the impact of the relaxation approach on other
stability properties of time integration methods. In particular, zero
stability and strong stability preserving properties of linear multistep
and Runge--Kutta methods are not changed significantly.

While relaxation methods appear to provide good results in our
numerical experiments, further practical experience on a wide
range of problems is still needed to determine their general
effectiveness. Other areas of ongoing research include
the development of other means to estimate
the change of a dissipated functional $\eta$ and development of a spatially
localized relaxation approach for conservation laws with convex entropies.

\appendix
\section*{Acknowledgments}

Research reported in this publication was supported by the
King Abdullah University of Science and Technology (KAUST).
The project ``Application-domain-specific highly reliable IT solutions'' has
been implemented with the support provided from the National Research,
Development and Innovation Fund of Hungary, and financed under the scheme
Thematic Excellence Programme no. 2020-4.1.1-TKP2020
(National Challenges Subprogramme).

\section{Superconvergence for Euclidean Hamiltonian Problems}
\label{sec:superconvergence}

Here, we present a generalization of the superconvergence Theorem~4.1
of \cite{ranocha2020relaxationHamiltonian} to general energy-conservative
B-series \cite{hairer1974butcher} time integration methods, see e.g.\
\cite{mclachlan2016b} and the references cited therein. This general class of
time integration schemes includes among others Runge--Kutta methods,
linear multistep methods, Taylor series methods, and Rosenbrock methods.

Consider a Hamiltonian $H$ that is a smooth function of the squared
Euclidean norm, i.e.
\begin{equation}
  H(q,p) = G\bigl( (|q|^2 + |p|^2) / 2 \bigr),
\end{equation}
where $G$ is a smooth function. The corresponding Hamiltonian system is
\begin{equation}
\label{eq:nonlinear-Euclidean-Hamiltonian}
  \begin{cases}
    u'(t) = f(u(t)), \\
    u(0) = u^{0},
  \end{cases}
  \quad
  u(t) = \begin{pmatrix} q(t) \\ p(t) \end{pmatrix},
  \quad
  f(q,p) = g\bigl( (|q|^2 + |p|^2) / 2 \bigr) \begin{pmatrix} p \\ -q \end{pmatrix},
\end{equation}
where $g = G'$. We refer to \eqref{eq:nonlinear-Euclidean-Hamiltonian} as a
\emph{Euclidean Hamiltonian} problem.
For this class of problems, nominally odd-order energy-conservative
B-series time integration methods are superconvergent.

The proof relies on a special geometric structure of the error. As shown
in \cite[Lemma~A.4]{ranocha2020relaxationHamiltonian}, for \eqref{eq:nonlinear-Euclidean-Hamiltonian}
the local error can be divided into two parts. The part that is along the
manifold of constant $H$ includes all terms with even powers of $\dt$, while
the part orthogonal to the manifold of constant $H$ includes all terms with odd
powers of $\dt$. Thus a solution that preserves $H$ has no error terms with
odd powers of $\dt$.
\begin{theorem}
\label{thm:nonlinear-Euclidean-Hamiltonian}
  Consider the general nonlinear Euclidean Hamiltonian system
  \eqref{eq:nonlinear-Euclidean-Hamiltonian}
  with $g \neq 0$ and a B-series time integration method of order $p$
  with or without applying orthogonal projection or relaxation.

  If the method conserves the Hamiltonian, the expansion of the local
  error contains only odd powers of $\dt$. In particular, its order of
  accuracy for the system \eqref{eq:nonlinear-Euclidean-Hamiltonian} is
  $p+1$ if $p$ is odd.
\end{theorem}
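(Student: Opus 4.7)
The plan is to follow and generalize the proof of Theorem~4.1 of \cite{ranocha2020relaxationHamiltonian} from Runge--Kutta methods to the full class of B-series time integration methods. The essential input is a parity-based decomposition of the local error for the Euclidean Hamiltonian system \eqref{eq:nonlinear-Euclidean-Hamiltonian}: as established in Lemma~A.4 of that reference, the error splits into a component along the $H$-level set and a component orthogonal to it, with each component picking up only one parity of powers of $\dt$.

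First, I would recast this decomposition in B-series language. Writing both the exact flow and the numerical flow as formal series $\sum_\tau a(\tau)\,\dt^{|\tau|}\,F(\tau)(u^0)$ indexed by rooted trees, the central claim is that every elementary differential $F(\tau)(q,p)$ lies either tangent or normal to the sphere $|q|^2+|p|^2 = \text{const}$, with the orientation determined solely by the parity of $|\tau|$. This is proved by induction on $\tau$: the vector field $f(q,p) = g((|q|^2+|p|^2)/2)\,(p,-q)^\top$ is equivariant under the $SO(2)$-action mixing $q$ and $p$, and this symmetry propagates through differentiation and multilinear composition, with each new application of $f$ flipping the tangent/normal status. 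Because any B-series method --- Runge--Kutta, linear multistep, Rosenbrock, Taylor-series, and so on (cf.\ \cite{mclachlan2016b}) --- has a numerical flow given by a B-series, the same parity decomposition applies to its local truncation error.

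Once this structural result is in hand, the conservation hypothesis completes the argument. Whether $H$ is preserved intrinsically, by orthogonal projection, or by the relaxation correction developed in Sections~\ref{sec:general-relaxation}--\ref{sec:accuracy-gamma-general}, the final numerical value lies on the same level set of $H$ as the exact solution. The component of the local error normal to that level set must therefore vanish identically in $\dt$, which by the parity result eliminates all terms of the ``wrong'' parity; only odd powers of $\dt$ survive in the expansion. A standard order count then finishes the proof: for an order-$p$ method the leading nonzero term sits at $\dt^{p+1}$, and when $p$ is odd the exponent $p+1$ is even and thus belongs to the eliminated class, so the first surviving term is at $\dt^{p+2}$, giving local order $p+1$.

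The main obstacle is the B-series generalization of Lemma~A.4: one must verify carefully that the parity of $|\tau|$ controls the tangent/normal orientation of $F(\tau)$ for every rooted tree, via an induction that exploits the specific $(p,-q)$ block structure of $f$ and the behavior of elementary differentials under differentiation. A secondary subtlety is handling projection and relaxation uniformly within the B-series framework; for projection this is immediate from the constraint $H(u^n_\lambda) = H(u^0)$, but for relaxation one must additionally observe that both $\unew$ and $\uold$ lie on the target level set to the required order, so that the convex combination $\ugamma = \uold + \gamma(\unew - \uold)$ inherits the parity decomposition along with the correct value of $H$.
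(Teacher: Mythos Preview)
Your plan tracks the paper's proof closely: both rely on the parity decomposition of Lemma~A.4 (even-order elementary differentials are radial, odd-order ones are tangential) together with conservation of the Euclidean norm. The paper also simply writes the local error as a B-series $\sum_{k\ge p+1}\dt^k\sum_{|t|=k}e_t F(t)(u^{n-1})$ and proceeds, so your extra remarks about inducting on trees and about fitting projection/relaxation into this expansion are elaborations rather than a different route.

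There is, however, a genuine gap in your central deduction. You write that because the numerical value and the exact solution lie on the same level set of $H$, ``the component of the local error normal to that level set must therefore vanish identically in $\dt$.'' This is false: two points on the same sphere $\|u\|=\|u^{n-1}\|$ generally have a difference with a nonzero component along $u^{n-1}$. So conservation alone does not kill the radial part of the error; the parity structure must be fed in \emph{before} the conservation constraint is exploited, not after.

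The paper does exactly this. It differentiates the identity $\|u^n\|^2-\|u(t^{n-1}+\dt)\|^2\equiv 0$ at $\dt=0$ and, via polarization, extracts the scalar relation
\[
  \sum_{|t|=k} e_t\,\bigl\langle F(t)(u^{n-1}),\,u^{n-1}\bigr\rangle=0
\]
for each order $k\ge p+1$. Only then is Lemma~A.4 invoked: for even $k$ every $F(t)(u^{n-1})$ is parallel to $u^{n-1}$, so the above scalar constraint forces the full vector $\sum_{|t|=k}e_t F(t)(u^{n-1})$ to vanish. The logic is ``conservation $\Rightarrow$ one scalar equation per order; parity $\Rightarrow$ even-order coefficient lives in a one-dimensional radial subspace; hence the scalar equation kills it.'' Your outline reverses the order of these two inputs and thereby appeals to a geometric fact that does not hold. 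Once you replace your ``normal component vanishes'' step by the inner-product computation above, your argument coincides with the paper's.
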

\begin{proof}
  It suffices to consider the local error after one step,
  which can be written as
  \begin{equation}
    u^{n} - u(t^{n-1}+\dt)
    =
    \sum_{k = p+1}^\infty \dt^k \sum_{\abs{t} = k} e_{t} F(t)(u^{n-1}),
  \end{equation}
  where $e_t$ are some coefficients of the local error, the second
  sum is a sum over all rooted trees~$t$ of order~$k$,
  cf.\ \cite[Chapter~3]{butcher2016numerical}, and $F(t)(u^{n-1})$
  is an elementary differential of $f$ evaluated at $u^{n-1}$.

  Since the method is energy-conservative, it conserves the Euclidean norm.
  Hence, for any $k \geq p+1$,
  \begin{equation}
  \begin{aligned}
    0
    &=
    \frac{\mathrm{d}^{k}}{\dif \dt^{k}}\bigg|_{\dt = 0} \left(
      \| u^{n} \|^2 - \| u(t^{n-1}+\dt) \|^2
    \right)
    \\
    &=
    \frac{\mathrm{d}^{k}}{\dif \dt^{k}}\bigg|_{\dt = 0}
    \scp{ u^{n} - u(t^{n-1}+\dt) }{ u^{n} + u(t^{n-1}+\dt) }
    \\
    &=
    \scp{ \frac{\mathrm{d}^{k}}{\dif \dt^{k}}\bigg|_{\dt = 0} \left( u^{n} - u(t^{n-1}+\dt) \right) }{ 2 u^{n-1} }
    \\
    &=
    2k \sum_{\abs{t} = k} e_{t} \scp{ F(t)(u^{n-1}) }{ u^{n-1} }.
  \end{aligned}
  \end{equation}
  For even $k$, $F(t)(u^{n-1}) \parallel u^{n-1}$ for $\abs{t} = k$
  because of \cite[Lemma~A.4]{ranocha2020relaxationHamiltonian}.
  Hence, $\sum_{\abs{t} = k} e_{t} F(t)(u^{n-1}) = 0$ for even $k$.
\end{proof}

\section{An Example for Which Relaxation LMMs are Exact}

Here we consider a problem from \cite{ranocha2020relaxation}:
\begin{subequations} \label{conserved-exponential}
\begin{align}
    u_1'(t) & = -\exp(u_2), \\
    u_2'(t) & = \exp(u_1).
\end{align}
\end{subequations}
The entropy $\eta(u) = \exp(u_1)+\exp(u_2)$ is conserved.
The exact solution is given by
\begin{align}
    u_1(t) & = \log\left(\frac{C\eta}{C+e^{\eta t}} \right), &
    u_2(t) & = \log\left(\frac{\eta e^{\eta t}}{C+e^{\eta t}} \right),
\end{align}
where $\eta = \exp(u_1)+\exp(u_2)$ and $C = e^{u_1(0)-u_2(0)}.$
\begin{theorem}
Let a relaxation LMM of order two or greater be applied to
\eqref{conserved-exponential}, with $\dt$ small enough such that there exists
a value of $\gamma$ satisfying \eqref{eq:eta-est-conservative}. If the
starting values are exact, then
the numerical solution is (in the absence of rounding errors) exact at each step.
\end{theorem}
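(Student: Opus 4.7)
My plan is to exploit a special structure of \eqref{conserved-exponential}: along every exact trajectory, the difference $u_2 - u_1$ is an affine function of $t$. Differentiating gives $(u_2 - u_1)' = \exp(u_1) + \exp(u_2) \equiv \eta$, hence $u_2(t) - u_1(t) = \eta t - \log C$ with $C = \exp(u_1(0) - u_2(0))$, which matches the closed-form solution above. Equivalently, the quantity $f_2(u) - f_1(u) = \exp(u_1) + \exp(u_2) = \eta$ is constant along the flow. The proof proceeds by induction on $n$; suppose $u^{n-k}, \dots, u^{n-1}$ are exact.

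First, I would verify that the baseline LMM step is already exact in the $u_2 - u_1$ coordinate. Subtracting the two components of \eqref{eq:LMM} gives
\begin{equation*}
  \unew_2 - \unew_1 = \sum_{i=0}^{k-1} \alpha^n_i (\eta\, t^{n-k+i} - \log C) + \dt \sum_{i=0}^{k} \beta^n_i \, \eta.
\end{equation*}
Every consistent LMM (in particular one of order $\geq 2$) reproduces affine polynomials exactly, so the right side collapses to $\eta\, \tnew - \log C$. Using $\sum_i \nu_i = 1$, the same affine identity yields $\uold_2 - \uold_1 = \eta\, \told - \log C$ and $\etaold = \sum_i \nu_i \eta = \eta$. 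Substituting into \eqref{eq:relaxation} and using $\tgamma = \told + \gamma(\tnew - \told)$ then produces
\begin{equation*}
  \ugamma_2 - \ugamma_1 = \eta\, \tgamma - \log C.
\end{equation*}

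Since $\eta$ is an invariant, $\etanew = \etaold = \eta$, so the relaxation condition \eqref{eq:condition-for-gamma} reduces to $\exp(\ugamma_1) + \exp(\ugamma_2) = \eta$. These two scalar identities determine $\ugamma$ uniquely: inserting $\ugamma_2 = \ugamma_1 + \eta\, \tgamma - \log C$ into the second fixes $\exp(\ugamma_1) = \eta/(1 + \exp(\eta \tgamma - \log C))$. But $u(\tgamma)$ satisfies the same pair of identities, so $\ugamma = u(\tgamma)$, which closes the induction. I foresee no serious technical obstacle beyond spotting the linear evolution of $u_2 - u_1$; once that observation is made, the remainder is just an application of LMM consistency to an affine function combined with the single invariance relation for $\eta$.
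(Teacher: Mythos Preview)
Your proposal is correct and follows essentially the same approach as the paper: the paper introduces the change of variables $w=u_2-u_1$, $\eta=\exp(u_1)+\exp(u_2)$, observes that the transformed system $w'=\eta$, $\eta'=0$ is affine, and then argues exactly as you do that consistency of the LMM makes $w^{\mathrm{new}}$ exact, the affine interpolation in the relaxation step keeps $w^n_\gamma$ exact, and the relaxation condition fixes $\eta$ exactly. Your explicit inversion of the two scalar relations at the end is a slightly more detailed version of the paper's appeal to the bijection $(u_1,u_2)\leftrightarrow(w,\eta)$, but the substance is identical.
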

\begin{proof}
First observe that the equations
\begin{equation}
  w = u_2-u_1,
  \qquad
  \eta = \exp(u_1) + \exp(u_2),
\end{equation}
define a bijection between $(u_1,u_2)\in \R^2$ and $(w,\eta)\in \R\times\R^+$.
Thus \eqref{conserved-exponential} is equivalent to the system
\begin{subequations} \label{easy-ODE}
\begin{align}
    w'(t) & = \eta, \\
    \eta'(t) & = 0,
\end{align}
\end{subequations}
with the solution $w(t)=\eta(0) t + w(0)$ and $\eta(t)=\eta(0)$. We will show
that any relaxation LMM integrates the system \eqref{easy-ODE} exactly.
We can write any LMM as
\begin{equation}
  \unew
  =
  \sum_{j=0}^{k-1}\alpha_j u^{n-k+j} + \dt \sum_{j=0}^k \beta_j f(u^{n-k+j}).
\end{equation}
Taking the difference of the formulas for $u^n_2$ and $u^n_1$ we obtain
\begin{equation}
w^n = \sum_{j=0}^{k-1}\alpha_j w^{n-k+j} + \dt \eta \sum_{j=0}^k \beta_j.
\end{equation}
Since the starting values are exact, $w^{n-k+j} = w(t^{n-k+j})$.
Due to the consistency of the LMM, this means that the formula above is
exact; i.e. $w^n = w(t^{n-1}+\dt)$. Next we take
\begin{equation}
w^n_\gamma = w^{n-m} + \gamma(w^n - w^{n-m}) = w(t^n+\gamma\dt),
\end{equation}
since $w(t)$ is linear. Thus the numerical solution given by the relaxation
LMM gives the exact value for $w$. It also gives the exact value for $\eta$,
since this is precisely what is enforced by relaxation.
\end{proof}
\begin{remark}
Note that without the relaxation step, the solution is not exact since
the equation for $w(t)$ is solved exactly but that for $\eta(t)$ is not.
Projection methods do not solve \eqref{conserved-exponential} exactly, because the
projection step does not preserve the exact solution of the linear ODE for $w(t)$.
Relaxation Runge--Kutta methods (and other multistage methods) also do not
solve \eqref{conserved-exponential} exactly, because they involve iterated
evaluation of the nonlinear RHS within a single step, so they do not effectively
integrate the linear ODE for $w(t)$ exactly.
\end{remark}

\printbibliography

\end{document}